%% file: main.tex
\documentclass[10pt]{article}

\usepackage[
    letterpaper,
    textwidth=5.5in,
    textheight=8.5in,
    centering
]{geometry}

\input{preamble}

\title{Structural Visibility in Dynamical Systems on Hypergraphs: A Pattern Formation Perspective}

\author{Moise R. Mouyebe and Anthony M. Bloch}

\date{}

\begin{document}
\maketitle

\begin{abstract}
    Hypergraphs encode rich higher-order interactions, but not all structural information is equally accessible through the dynamics. By analyzing pattern-forming instabilities in reaction-diffusion systems on directed hypergraphs, this work develops a theory of structural visibility that characterizes which features of higher-order structure survive successive levels of dynamical reduction. It is established that higher-order structure is not automatically dynamically relevant. Linearization destroys most higher-order information.  Meanwhile,   nonlinear reduction recovers only specific higher-order marginals of the adjacency tensor, and projection along critical directions further filters what is dynamically visible. First, we show that the linearized dynamics depends on the hypergraph only through its first-tail-moment statistics, termed exposure. Consequently, exposure-equivalent hypergraphs are linearly indistinguishable in the sense that they exhibit identical dispersion relations, critical
eigenspaces, and instability thresholds. Next, we define a hierarchy of hyperedge tail-moments that captures progressively
detailed  co-participation, and we prove a nonlinear structural decomposition theorem describing how contractions of these tensors, termed packing effects,  influence the reduced amplitude dynamics. This leads to a visibility hierarchy in which successive asymptotic orders reveal increasingly richer structural information. More specifically, exposure governs linear onset, while packing effects control post-onset dynamics. Finally, we establish results on nonlinear distinguishability, characterizing when  linearly indistinguishable higher-order systems may exhibit different saturated
amplitudes, branch selection and pattern morphology. In addition, we formalize when higher-order systems become dynamically indistinguishable from pairwise systems near onset, leading to the notion of dynamical graph surrogacy. Numerical simulations support the theoretical predictions.
\end{abstract}

\noindent\textbf{Keywords:}
structural visibility, directed hypergraphs, dynamical graph surrogacy,
reaction-diffusion systems on hypergraphs, diffusion-induced instability

\vspace{0.5em}

\section{Introduction}
Many complex systems involve interactions that cannot be adequately represented by simple pairwise relationships alone \cite{battiston2020networks,battiston2021physics}. In social settings, interactions commonly take place within groups rather than between isolated pairs of individuals. Similarly, biological processes are frequently mediated by molecular complexes composed of multiple components interacting simultaneously \cite{klamt2009hypergraphs}. Systems involved in communication, information processing, and collective decision-making likewise rely on interactions among groups of agents instead of one-to-one exchanges  \cite{battiston2020networks,iacopini2019simplicial, bairey2016high,centola2018behavior}. These observations have motivated a growing interest in higher-order network models \cite{bianconi2021higher} capable of accurately representing such genuinely multiway interactions.

Hypergraphs provide a natural  framework for representing and describing such systems \cite{lucas2020multiorder,schaub2020random,millan2020explosive}. Unlike ordinary graphs, where interactions occur only between pairs of nodes, hypergraphs can capture interactions among arbitrary collections of nodes. This enables them to encode structural information inaccessible to pairwise network models. Hypergraph-based approaches have been applied to investigate a broad range of phenomena, including diffusion, synchronization, contagion, collective dynamics and behavior, and pattern formation \cite{turing1952chemical,nakao2010turing,carletti2020dynamical, battiston2020networks,carletti2020random}.

Despite the flourishing research activity in this area, the connection between higher-order structure and dynamical behavior is still not fully understood.  
The increased expressive power of hypergraphs raises the following fundamental
question. Although hypergraphs encode rich higher-order structure, dynamical systems do not necessarily respond to all structural information equally.
Different structural features may influence different aspects of the dynamics,
while other features may remain dynamically invisible. Consequently, the
presence of higher-order interactions does not automatically imply that all higher-order
information is observable through system behavior. This leads to the central question of the present work, namely \emph{which features of hypergraph structure are visible through system  dynamics?}

To address this question,  we study reaction-diffusion systems on directed hypergraphs, and we analyze diffusion-driven instabilities \cite{turing1952chemical,nakao2010turing} and pattern formation \cite{murray2003mathematical,cross1993pattern,hoyle2006pattern,maini2012turing} for a general class of nonlinear multiway interaction operators.
 Our framework is inspired by the DeepSet construction \cite{zaheer2017deep}, which characterizes permutation-invariant aggregation through symmetric pooling  followed by nonlinear activation. While originally developed in the machine learning context \cite{zaheer2017deep,lee2019set}, the DeepSet viewpoint provides a natural and analytically tractable mechanism for defining genuinely multiway  operators without reducing interactions to pairwise form. In our setting, each directed hyperedge aggregates the states of its tail nodes through a nonlinear activation applied to their mean state, hence preserving both directionality and higher-order coupling structure.

In this work, we develop a theory of structural visibility for reaction-diffusion systems on directed hypergraphs  near a codimension-one steady
bifurcation.
Our first main result shows that  the linearized dynamics depends only on first-tail-moment statistics, termed
\emph{exposure}, which yields an induced operator reminiscent of the graph
Laplacian. Consequently, hypergraphs that agree in their exposure statistics are linearly indistinguishable. Meaning, they exhibit identical dispersion relations, critical eigenmodes, and instability thresholds. In particular, all higher-order co-occurrence information inside hyperedges disappears at linear order. This provides a precise explanation for why graph-based surrogates can remarkably well reproduce instability onset despite neglecting higher-order interactions.

Beyond onset, nonlinear saturation reveals progressively richer co-occurrence
information through a hierarchy of hyperedge tail-moments, termed \emph{packing tensors}. We prove a nonlinear structural decomposition theorem that establishes that the reduced amplitude dynamics does not depend directly on these raw tensors themselves, rather on projected \emph{packing effects} obtained through contraction with the critical left and right eigenvectors. These projected quantities constitute the dynamically visible part of higher-order hypergraph structure near onset, and they govern nonlinear saturation, branch selection, and pattern morphology. 
This reveals a new mechanism absent in purely pairwise systems, namely, higher-order interactions can remain completely invisible at linear onset while re-emerging through nonlinear saturation. 

Finally, building on this principle, we introduce the notion of weak packing-equivalence, which compares only the higher-order information visible in the reduced dynamics, and we establish nonlinear distinguishability results based on differences in  packing effects, by which exposure-equivalent systems may become distinguishable only beyond onset despite exhibiting identical linear instability behavior. In addition, we formalize the notion of dynamical graph surrogacy  near onset. We show that a hypergraph-coupled system may remain dynamically equivalent to a graph-based system despite the presence of genuine higher-order interactions, provided the corresponding  packing effects vanish. 

The paper is structured as follows: 
Section~\ref{sec:section2} reviews background on  hypergraphs and higher-order directionality. Section~\ref{sec:section3} introduces the general framework of nonlinear multiway diffusion operators on directed hypergraphs. Section~\ref{sec:linear-onset} develops the linear theory and characterizes instability onset through exposure operators. Section~\ref{sec:nonlinear-saturation} develops the weakly nonlinear theory, introduces  packing effects, weak packing-equivalence, and establishes the nonlinear distinguishability and dynamical graph surrogacy results. Section~\ref{sec:section6} presents numerical experiments validating the theoretical predictions. Finally, Section~\ref{sec:section7} concludes with a discussion and outlook.

\section{Hypergraph Background and Directionality}\label{sec:section2}

In this section we recall basic notions from hypergraph theory and introduce the directed hypergraph formalism used throughout the paper. This section establishes the hypergraph objects used throughout the paper and fixes notation. We work with directed hypergraphs in which a set of tail nodes jointly influences a head node. This setting captures genuinely higher-order interactions while retaining a notion of directionality that is essential in many applications. Our goal is not to provide a comprehensive survey, but to establish a minimal and consistent set of definitions sufficient to formulate and analyze dynamical systems with higher-order interactions. We also describe a directification procedure that allows undirected hypergraphs to be embedded into the directed framework, ensuring that the theory developed here applies to both settings.

\subsection{Hypergraphs}

A hypergraph (undirected) is a generalization of a graph in which edges can connect more than two vertices simultaneously. Formally, a hypergraph is a pair $\mathcal{H}=(V,E)$, where $V=\{v_1,\dots, v_N\}$ is a finite set of vertices and $E \subseteq 2^V$ is a collection of hyperedges, each of which is a nonempty subset of vertices. The size of a hyperedge is the number of vertices it contains. Its \emph{rank} $r(\mathcal H):=\max_{e\in E}|e|$ is the size of the largest hyperedge, meanwhile the \emph{corank} $c(\mathcal H):=\min_{e\in E}|e|$ is the size of its smallest hyperedge.

An undirected hypergraph is said to be  \emph{$k$-uniform} if all its hyperedges are of size $k$, or equivalently if $r(\mathcal H) = c(\mathcal H) = k$. In particular, when $k=2$, the hypergraph reduces to a standard graph.

 Allowing hyperedges of arbitrary size enables the modeling of multiway interactions that cannot be decomposed into pairwise effects.

\subsection{Directed hypergraphs and B-hypergraphs}

To describe asymmetric interaction,  we work with directed hypergraphs.

\begin{definition}[Directed hyperedge]\label{def:directed-hyperarc}
A directed hyperedge (or \emph{hyperarc}) on $V$ is an ordered pair $e = (T(e),H(e))$ of disjoint nonempty subsets of $V$, where $T(e)\subset V$ is the \emph{tail} set and $H(e)\subset V$ is the \emph{head} set. We assume $T(e)\cap H(e)=\emptyset$ to avoid loops.
\end{definition}

In this paper,  we focus on a particular class known as B-hypergraphs \cite{gallo1993directed} which are sufficient for modeling the dynamics considered here.

\begin{definition}[B-arc and B-hypergraph]\label{def:B-hypergraph}
A hyperarc $e=(T(e),H(e))$ is called a \emph{B-arc} if $|H(e)|=1$. In this case we write $H(e)=\{v_i\}$ and denote the hyperarc as $e=(T\to v_i)$.
A directed hypergraph $\mathcal H=(V,E)$ is a \emph{B-hypergraph} if every $e\in E$ is a B-arc.
\end{definition}

\begin{definition}[$k$-uniform B-hypergraph]
    A B-hypergraph $\mathcal{H} = (V,E)$ is said to be $k$-uniform if all of its B-arcs have tail size $k$.
\end{definition}

Oftentimes, it is useful to assign \emph{weights} to each hyperedge in order to encode heterogeneous interaction strengths. This gives rise to notion of \emph{weighted (directed) hypergraph}, namely   a triple $\mathcal H=(V,E,w)$ where $(V,E)$ is a (directed) hypergraph and $w:E\to\mathbb R_{>0}$ assigns a positive weight to each hyperarc.

Note that we will sometimes refer to a node only by its index whenever the context is clear. For instance,  $e=(T\to v_i)$ will be sometimes denoted as $e=(T\to i)$.

\subsection{Order decomposition and uniform B-components}\label{subsec:uniform-components}

In a B-hypergraph, hyperarcs may have varying tail cardinalities. Since the dynamics will sum over interaction orders, it is convenient to group hyperarcs by order.

\begin{definition}[Order-$k$ component]\label{def:order-k-component}
Let $\mathcal H=(V,E,w)$ be a B-hypergraph. For each $k\in \{1, \dots, r(\mathcal{H})-1 \}$, define

\begin{equation}\label{eq:order-k-component}
    E_k:=\{e=(T\to i)\in E:\; |T|=k\},
\qquad \mathcal H_k:=(V,E_k,w|_{E_k}).
\end{equation}

If $E_k\neq\emptyset$, then $\mathcal H_k$ is a $k$-$uniform$ B-hypergraph called the {$k$-uniform B-component} of $\mathcal H$. Furthermore, the $\mathcal{H}_k$ form a partition of $\mathcal{H}$.
\end{definition}

 In this paper, we sometimes  use $\rho$ to denote the hypergraph rank, namely  the maximal hyperedge \emph{size} (tail+head), so tails have size at most $\rho-1$.

\subsection{Adjacency tensors for uniform  B-hypergraphs}\label{subsec:adjacency-tensor}

\begin{definition}[Adjacency tensor]\label{def:adjacency-tensor}
Let $\mathcal H_k=(V,E_k,w)$ be a $k$-uniform B-hypergraph. The adjacency tensor of $\mathcal{H}_k$ is the order-$(k+1)$ tensor $\textsf{A} \in \mathbb{R}^{N\times N \times \cdots \times N}$ defined as follows

\begin{align}\label{eq:adjacency-tensor-def}
    \textsf{A}^{(k)}_{ij_1\cdots j_{k}} &= \begin{cases}
\frac{w(e)}{k!} & \text{if } ~e = \left(\left\{v_{j_1},\cdots, v_{j_{k}} \right\},v_i\right) \in E \\ \\
0 & \text{otherwise } 
\end{cases}
\end{align}    
\end{definition}

\begin{remark}\label{rem:normalization-k-factorial}
The normalization used in \eqref{eq:adjacency-tensor-def} is convenient when summing over ordered tuples $(v_{j_1},\dots,v_{j_k})$ in the dynamical equations. Each unordered tail set $\{v_{j_1},\dots,v_{j_k}\}$ appears $k!$ times under permutations, and the factor $1/k!$ exactly cancels this multiplicity. This makes expressions invariant to tail ordering while allowing us to use standard tensor-index summations. 

Furthermore, we note that in general adjacency tensor $\textsf{A}^{(k)}$  of a B-hypergraph is not \emph{symmetric}. However by construction, it is symmetric in the tail indices $j_1,\dots,j_k$, namely the  $k$-order slices $\textsf{A}^{(k)}_{i:\cdots :}$ are symmetric.
\end{remark}

\subsection{Degrees for directed B-hypergraphs}\label{subsec:degrees}

For directed graphs, \emph{in-degree} and \emph{out-degree} distinguish incoming from outgoing interactions. In a B-hypergraph, the analogous quantities count incoming and  outgoing hyperarcs adjusted for their weights.

\begin{definition}[in-degree and out-degree]\label{def:degrees_in--and--out}
Let $\mathcal H_k=(V,E_k,w)$ denote  a $k$-uniform B-hypergraph.
The (weighted) \emph{in-degree} and \emph{out-degree} of vertex $v_i$ and  are defined as follows:
\begin{align}
    d_\mathrm{in}^{(k)}(v_i) &\coloneqq \sum_{e=(T\to v_i)\in E_k} w(e) \label{def:in-degree}\\
d_\mathrm{out}^{(k)}(v_i) &\coloneqq  \sum_{e=(T\to v_h)\in E_k:\; v_i\in T} w(e) \label{def:out-degree}
\end{align}
\end{definition}

The intuitive interpretation is that  $d_\mathrm{in}^{(k)}(v_i)$ measures the total incoming order-$k$ interaction strength into head vertex  $v_i$, whereas 
$d_\mathrm{out}^{(k)}(v_i)$ measures how strongly vertex $v_i$ participates in tails of order-$k$ interactions.

\subsection{Directification}\label{subsec:directification}

Our theory is developed for directed B-hypergraphs. To ensure that it also applies to undirected hypergraphs, we introduce a canonical procedure that embeds an undirected hypergraph into the space of B-hypergraphs by replacing each undirected hyperedge by a set of B-arcs.

\begin{definition}[B-arc set associated with an undirected hyperedge]\label{def:B-arc-set}
Let $\mathcal H=(V,E,w)$ be an undirected weighted hypergraph and let
$e=\{v_1,\dots,v_p\}\in E$ be a hyperedge of size $p\ge2$.
For each $v_i\in e$, define $e_{\hat{i}}:=e\setminus\{v_i\}$ and set
\begin{equation}
    B_{\mathrm{arc}}(e):=\{(e_{\hat{i}}\to v_i):\; v_i\in e\}
\end{equation}
\end{definition}

\begin{definition}[B-embedding / Directification]\label{def:B-embedding}
Let $\mathcal H=(V,E,w)$ be an undirected weighted hypergraph. Its \emph{B-embedding} (or \emph{directification}) is the weighted B-hypergraph
\begin{equation}
    \widetilde{\mathcal H}:=(V,\widetilde E,\widetilde w),
\qquad
\widetilde E:=\bigcup_{e\in E} B_{\mathrm{arc}}(e)
\end{equation}
with weights inherited from $\mathcal H$ via
\begin{equation}
    \widetilde w\big(e\setminus\{v\}\to v\big):= w(e)
\quad \text{for each } e\in E,\;\text{ and }\; v\in e
\end{equation}
\end{definition}

The B-embedding replaces an undirected group interaction among $p$ vertices by the $p$ directed influences in which each vertex acts as a head receiving influence from the remaining $p-1$ vertices. This choice preserves the permutation symmetry of the original undirected hyperedge while producing a directed structure compatible with our theory.

\subsubsection{Directification Example}\label{ex:directification}

Let $V=\{1,2,3,4\}$ and consider the  $3$-uniform undirected hypergraph   with   hypergraphs $e_1=\{2,3,4\}$ and $e_2=\{1, 2,4\}$
with weight $w(e_1)=\omega = w(e_2)$ shown in Figure~\ref{fig:3-uniform-hypergraph_example}.
Then the B-arc set associated with $e_1$ is $B_{\mathrm{arc}}(e_1)=\{(\{3,4\}\to 2),(\{2,4\}\to 3),(\{2,3\}\to 4)\}$ and each resulting B-arc has weight $\omega$ as shown in Figure~\ref{fig:barc_e1}.  Similarly  we have $B_\mathrm{arc}(e_2) =\{(\{2,4\}\to 1),(\{1,4\}\to 2),(\{1,2\}\to 4)\}$.

\tikzstyle{vertex} = [color=black!60,fill,shape=circle]
\tikzstyle{edge} = [line width=.1pt]

\begin{figure}[H]
\centering

\resizebox{0.35\linewidth}{!}{
\begin{tikzpicture}

    \node[vertex,text=white,scale=1] (v1) at (0,2) {1};
    \node[vertex,text=white,scale=1] (v2) at (2,2) {2};
    \node[vertex,text=white,scale=1] (v3) at (2,0) {3};
    \node[vertex,text=white,scale=1] (v4) at (0,0) {4};

    \begin{scope}[fill opacity=0.8]

    \draw[fill=red!60!magenta,opacity=.4]
     ($(v2)+(0,0.5)$) 
        to[out=0,in=90] 
        ($(v3) + (0.5,0)$)
        to[out=270,in=0]
        ($(v4) + (0,-0.5)$)
        to[out=180,in=270]
        ($(v4) + (-0.5,0)$)
        to[out=90,in=180]
        ($(v4) + (0,0.5)$)
        to[out=0,in=175]
        ($(v4)+(1.2,0.4)$)
        to[out=0,in=270]
        ($(v4)+(1.5,0.7)$)
        to[out=90,in=270] 
        ($(v2)+(-0.5,0)$)
        to[out=90,in=180] 
        ($(v2)+(0,0.5)$);

    \draw[fill=green!100,opacity=.4]
     ($(v2)+(0,0.5)$) 
        to[out=180,in=90] 
        ($(v1) + (-0.5,0)$)
        to[out=270,in=180]
        ($(v4) + (0,-0.5)$)
        to[out=0,in=270]
        ($(v4) + (0.5,0.1)$)
        to[out=90,in=270] 
        ($(v4)+(.45,1)$)
        to[out=90,in=180] 
        ($(v2)+(0,-0.5)$)
        to[out=0,in=270] 
        ($(v2)+(0.55,0)$)
        to[out=90,in=0] 
        ($(v2)+(0,0.5)$);

    \end{scope}

\end{tikzpicture}
}

\caption[A 3-uniform undirected hypergraph on four nodes]{A $3$-uniform hypergraph on four nodes with hyperedges 
$e_1=\{2,3,4\}$ and $e_2=\{1,2,4\}$.}
\label{fig:3-uniform-hypergraph_example}

\end{figure}


\begin{figure}[H]
\centering

\resizebox{0.75\linewidth}{!}{%
$
B_{\mathrm{arc}}(e_1)=
\left\{
\begin{array}{c}

\begin{tikzpicture}

    \node[vertex,text=white,scale=.75] (v3) at (0,2) {3};
    \node[vertex,text=white,scale=.75] (v4) at (0,0) {4};
    \coordinate (temp) at (1.2,1) {};
    \node[vertex,text=white,scale=.75] (v2) at (2,1) {2};

    \draw[-, line width=1.2, shorten <=.1mm] 
        (v3) to[out=270,in=180] (temp);

    \draw[->, line width=1.2, shorten <=.1mm]
        (v4) to[out=90,in=180] (temp);

    \draw[-, line width=1.2, shorten >=.1mm]
        (temp) to[out=0,in=180] (v2);

\end{tikzpicture},

\begin{tikzpicture}

    \node[vertex,text=white,scale=.75] (v2) at (0,2) {2};
    \node[vertex,text=white,scale=.75] (v4) at (0,0) {4};
    \coordinate (temp) at (1.2,1) {};
    \node[vertex,text=white,scale=.75] (v3) at (2,1) {3};

    \draw[->, line width=1.2, shorten <=.1mm] 
        (v2) to[out=270,in=180] (temp);

    \draw[->, line width=1.2, shorten <=.1mm]
        (v4) to[out=90,in=180] (temp);

    \draw[-, line width=1.2, shorten >=.1mm]
        (temp) to[out=0,in=180] (v3);

\end{tikzpicture},

\begin{tikzpicture}

    \node[vertex,text=white,scale=.75] (v2) at (0,2) {2};
    \node[vertex,text=white,scale=.75] (v3) at (0,0) {3};
    \coordinate (temp) at (1.2,1) {};
    \node[vertex,text=white,scale=.75] (v4) at (2,1) {4};

    \draw[->, line width=1.2, shorten <=.1mm] 
        (v2) to[out=270,in=180] (temp);

    \draw[->, line width=1.2, shorten <=.1mm]
        (v3) to[out=90,in=180] (temp);

    \draw[-, line width=1.2, shorten >=.1mm]
        (temp) to[out=0,in=180] (v4);

\end{tikzpicture}

\end{array}
\right\}
$
}

\caption[The B-arc set $B_{\mathrm{arc}}(e_1)$ associated with the hyperedge
$e_1=\{2,3,4\}$]{The B-arc set associated with the hyperedge
$e_1=\{2,3,4\}$, obtained by selecting each node in turn as the head node while the remaining nodes form the tail set.}
\label{fig:barc_e1}
\end{figure}

\begin{remark}[Relevance of directification]
All results proved for directed B-hypergraphs apply to undirected hypergraphs by first applying the B-embedding and then interpreting the resulting directed dynamics as the natural directed representation of the original undirected group interactions.
\end{remark}

\subsection{Symmetric B-hypergraphs}

\begin{definition}[Complement B-arc set]
    Let $\mathcal{H} = (V,E,w)$ be a uniform B-hypergraph, and consider the B-arc $e = (T(e),h) \in E$. We define the \emph{complement B-arc set} $\bar{B}_{arc}(e)$ of $e$ as follows:
    \begin{equation}\label{complement_B-arc}
        \bar{B}_{arc}(e) = \left\{ \left(  T_{\hat{v}}(e)\cup \{h\} ,v\right) ~\rvert~ v \in T(e) \right\}
    \end{equation}
    where $T_{\hat{v}}(e) = T(e)\setminus \{v\}$
    \end{definition}

The notion of \emph{complement arc set} defined above allows us to give precise definition of  \emph{symmetric B-hypergraphs} which generalizes the notion of \emph{bidirectional graphs}.

\begin{definition} [Symmetric Uniform B-hypergraph]
    Let $\mathcal{H} = (V,E,w)$ be a uniform B-hypergraph. $\mathcal{H}$ is said to be \emph{symmetric} if its hyperarc set $E$ is \emph{closed under B-arc set complementation}. More specifically,  $\mathcal{H}$ is symmetric if the following holds:
    \begin{equation}
        \bar{B}_{arc}(e) \subset E ~ \text{ whenever} ~ e \in E
    \end{equation}
\end{definition}

\begin{proposition}
    Let $\mathcal{H} = (V,E,w)$ be a symmetric uniform  B-hypergraph. The adjacency tensor (see \ref{eq:adjacency-tensor-def}) of $\mathcal{H}$ is symmetric.
\end{proposition}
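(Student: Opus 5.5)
The plan is to show that $\textsf{A}^{(k)}$ is invariant under every permutation of its $k+1$ indices $(i,j_1,\dots,j_k)$. By Remark~\ref{rem:normalization-k-factorial}, $\textsf{A}^{(k)}$ is already symmetric in the tail indices $j_1,\dots,j_k$. The permutations of the tail indices together with the single transposition swapping positions $0$ and $1$ (i.e.\ $i\leftrightarrow j_1$) generate the full symmetric group $S_{k+1}$. So it suffices to prove
\begin{equation*}
\textsf{A}^{(k)}_{i j_1 j_2\cdots j_k}=\textsf{A}^{(k)}_{j_1 i j_2\cdots j_k}\qquad\text{for all index tuples.}
\end{equation*}

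\textbf{Degenerate index tuples.} First I dispose of tuples with a repeated index. Both sides are zero in that case. The reason is that a tail is a set of $k$ distinct vertices, and Definition~\ref{def:directed-hyperarc} forces the head to lie outside the tail. Hence any index tuple with a repetition, in any position, cannot encode a B-arc.

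\textbf{Distinct indices, main step.} Suppose the left-hand side is nonzero. Then $e=(\{v_{j_1},\dots,v_{j_k}\}\to v_i)\in E$. Taking $v=v_{j_1}\in T(e)$ in the complement set \eqref{complement_B-arc} gives the arc
\[
e'=(\{v_i,v_{j_2},\dots,v_{j_k}\}\to v_{j_1})\in \bar B_{arc}(e)\subset E .
\]
Therefore $\textsf{A}^{(k)}_{j_1 i j_2\cdots j_k}=w(e')/k!$ is nonzero as well.

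Conversely, suppose the right-hand side is nonzero, so $e'\in E$. Applying complementation to $e'$ at the tail vertex $v_i$ returns exactly $e$, so $e\in E$. Hence the supports of the two sides coincide, and both are zero simultaneously.

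\textbf{Main obstacle: equality of weights.} When both entries are nonzero they equal $w(e)/k!$ and $w(e')/k!$. The definition of a symmetric B-hypergraph only constrains the arc set, not the weights, so equality is not automatic. I would handle this by reading ``closed under complementation'' in the weighted sense, namely that $w$ is constant on each complementation class, exactly as the B-embedding (Definition~\ref{def:B-embedding}) assigns the common weight $w(e)$ to every arc. I would state this convention explicitly.

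Under that convention, one observation is needed: the class generated by repeated complementation from $e$ is exactly the set of arcs $(S\setminus\{v\}\to v)$ for $v\in S$, where $S=T(e)\cup\{i\}$. This is because complementation only exchanges the head with a tail vertex and never leaves the underlying $(k+1)$-set $S$. So $w(e)=w(e')$, and the transposition identity holds. In the unweighted (or constant-weight) case, the support argument alone already completes the proof.
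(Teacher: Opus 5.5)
Your proof is correct and is a careful unpacking of the paper's one-line argument (``it follows by construction''). Tail symmetry together with the head--tail transposition generates $S_{k+1}$, and closure under complementation makes the supports of $\textsf{A}^{(k)}_{ij_1\cdots j_k}$ and $\textsf{A}^{(k)}_{j_1ij_2\cdots j_k}$ coincide. The weight condition you impose is necessary, not merely convenient, and the paper tacitly assumes it. Already for $k=1$, the arcs $(\{1\}\to 2)$ and $(\{2\}\to 1)$ with different weights form an arc set closed under complementation whose adjacency matrix is not symmetric. So the statement must be read with $w$ constant on each class $\{(S\setminus\{v\}\to v): v\in S\}$, which holds automatically for B-embeddings of undirected hypergraphs.
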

\begin{proof}
    It follows  by construction.
\end{proof}

\begin{proposition}
    Let $\mathcal{H} = (V,E,w)$ be a uniform undirected hypergraph. Its B-embedding $\tilde{\mathcal{H}}$ is a symmetric  B-hypergraph.
\end{proposition}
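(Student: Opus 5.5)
We verify the two requirements directly from the definitions: that $\widetilde{\mathcal H}$ is a uniform B-hypergraph, and that $\widetilde E$ is closed under complementation of B-arc sets.

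\emph{Uniformity.} Let $\mathcal H$ be $k$-uniform with $k\ge 2$, so every hyperedge has size $p=k$. Every B-arc in $B_{\mathrm{arc}}(e)$ has the form $(e_{\hat i}\to v_i)$. Its tail has size $k-1$ and its head is the single vertex $v_i$. Hence $\widetilde{\mathcal H}$ is a $(k-1)$-uniform B-hypergraph, and the complement B-arc set is defined for it.

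\emph{Closure under complementation.} Take an arbitrary arc $a=(e\setminus\{h\}\to h)\in\widetilde E$ with $e\in E$ and $h\in e$. Its tail is $T(a)=e\setminus\{h\}$. By \eqref{complement_B-arc}, each element of $\bar B_{arc}(a)$ is indexed by some $v\in T(a)$ and equals
\[
\bigl(T_{\hat v}(a)\cup\{h\},\,v\bigr)=\bigl((e\setminus\{h,v\})\cup\{h\},\,v\bigr)=\bigl(e\setminus\{v\}\to v\bigr).
\]
This set identity is the only real computation. It uses $h\notin T(a)$, so that removing $v$ and then adding $h$ back gives exactly $e\setminus\{v\}$. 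Since $v\in e$, the arc $(e\setminus\{v\}\to v)$ lies in $B_{\mathrm{arc}}(e)\subset\widetilde E$. Therefore $\bar B_{arc}(a)\subset\widetilde E$, which is exactly the symmetry condition.

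\emph{Remarks.} The complement B-arc set depends only on the arc set, not on the weights. Even so, the construction assigns every arc in $B_{\mathrm{arc}}(e)$ the common weight $w(e)$. So $\widetilde w$ is also constant on complement classes, which is consistent with the symmetry of the adjacency tensor in the preceding proposition.

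The only subtlety, more bookkeeping than obstacle, concerns distinct hyperedges $e\neq e'$. These cannot produce the same B-arc, because the arc $(e\setminus\{v\}\to v)$ recovers $e$ as tail $\cup$ head. Hence $\widetilde w$ is well defined and the weights play no role in the argument.
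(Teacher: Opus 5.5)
Your proof is correct, and it takes a different route from the paper's. The paper's entire argument is the assertion that the B-embedding map $\imath$ into symmetric B-hypergraphs is an injection that ``preserves symmetry.'' Declaring the codomain to be symmetric B-hypergraphs assumes exactly the containment the proposition claims, and injectivity plays no role in the statement.

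What the paper's framing buys is the picture of undirected hypergraphs as a subclass of symmetric B-hypergraphs, which is what the directification remark relies on. It does not supply the verification.

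Your argument works directly from the definitions. First you check that $\widetilde{\mathcal H}$ is $(k-1)$-uniform, which is needed because $\bar B_{arc}$ is only defined for uniform B-hypergraphs. Then the set identity $(e\setminus\{h,v\})\cup\{h\}=e\setminus\{v\}$, valid because $h\in e$ and $h\neq v$, places every complement arc of $(e\setminus\{h\}\to h)$ in $B_{\mathrm{arc}}(e)\subset\widetilde E$. That identity is the substance the paper leaves implicit.

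Your closing observation that tail $\cup$ head recovers $e$ also does double duty. It gives the well-definedness of $\widetilde w$, and it is precisely the fact that justifies the injectivity of $\imath$ the paper invokes. So your write-up covers both the paper's framing and the step it omits.
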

\begin{proof}
    The map 
    \[  \imath \colon \left\{\text{Undirected Hypergraphs}\right\} \hookrightarrow \left\{ \text{Symmetric  B-hypergraphs} \right\} \]

   that sends an undirected hypergraph $\mathcal{H}$ to its B-embedding $\tilde{\mathcal{H}} = \imath(\mathcal{H})$ is an injection that preserves symmetry.
\end{proof}

\section{A Unifying Framework for Dynamical Systems on Directed Hypergraphs}\label{sec:section3}

We now introduce a general framework for defining and analyzing dynamical systems on
directed B-hypergraphs. The framework accommodates higher-order directed
interactions between groups of nodes, supports multi-dimensional node states,
and it includes a wide and flexible  class of coupling mechanisms. It is sufficiently general
to recover many classical network dynamical systems as special cases, while
providing a systematic extension to directed hypergraph settings.

\subsection{Dynamical systems on hypergraphs}

We consider a system of $N$ identical dynamical units (nodes or agents), indexed by $i\in\{1,\dots,N\}$. The state of node $i$ is denoted by $ x_i(t)\in\mathbb R^d$, 
where $d\ge1$ is the dimension of the local state space. In isolation, each node evolves according to the same intrinsic dynamics
\begin{equation}\label{eq:intrinsic-dynamics}
\dot x_i = f(x_i)
\end{equation}
where $f:\mathbb R^d\to\mathbb R^d$ is assumed to be sufficiently smooth.
The joint state of the system is
\[
\mathbf{x}=(x_1,\dots,x_N)\in(\mathbb R^d)^N\cong\mathbb R^{Nd}
\]

Let $\mathcal{H} = (V,E,w)$ be a B-hypergraph of rank $\rho \ge 2$  modeling the interaction of these $N$ agents. As discussed in Definition \ref{def:order-k-component}, $\mathcal{H}$ can be partitioned into uniform B-components $\{\mathcal{H}_k\}_{k=1}^{\rho-1}$ collecting all hyperarcs of tail size $k$. Each uniform component $\mathcal{H}_k$ is encoded by an order-$(k+1)$ adjacency tensor $\textsf{A}^{(k)}$ with entries $\textsf{A}^{(k)}_{ij_1 \cdots j_k}$ as defined  in Definition \ref{def:adjacency-tensor}. The first mode corresponds to the head index, while the remaining $k$ modes correspond to the tail indices.

\begin{definition}[Dynamical system on directed  hypergraphs]\label{def:hypergraph-dynamics}
A \emph{dynamical system on a directed hypergraph} $\mathcal H$ is a system of ordinary differential equations of the form
\begin{equation}\label{eq:hypergraph-dynamics}
\dot x_i
=
f(x_i)
+
\sum_{k=1}^{\rho-1}
\sum_{j_1,\dots,j_k}
\mathsf A^{(k)}_{i j_1\cdots j_k}
\,\phi_k(x_{j_1},\dots,x_{j_k},x_i),
\qquad i=1,\dots,N
\end{equation}
where for each $k$, the \emph{coupling function}
\[
\phi_k:(\mathbb R^d)^k\times\mathbb R^d \to \mathbb R^d
\]
is a sufficiently smooth function that specifies how the states of the $k$ tail nodes jointly influence the head node.
\end{definition}

The following proposition gives an equivalent formulation of the dynamical system \eqref{eq:hypergraph-dynamics} under additional symmetry assumption. This formulation turns out to be more  for computationally tractable. 

\begin{proposition}[Hyperedge formulation]\label{prop:hyperedge-formulation}
Assume that, for each order \(k\) the coupling function \(\phi_k\) is symmetric in its \(k\) tail arguments. Then the tensor formulation given in \eqref{eq:hypergraph-dynamics}

is equivalent to the hyperedge-sum formulation
\begin{equation}\label{eq:hyperedge-sum-formulation}
\dot x_i
=
f(x_i)
+
\sum_{k=1}^{\rho-1}
\sum_{e=(T\to i)\in E_k}
w(e)\,\phi_k\big((x_j)_{j\in T},x_i\big)
\end{equation}
where \(E_k\) denotes the set of \(k\)-hyperedges.
\end{proposition}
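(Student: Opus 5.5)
The plan is to fix a head index $i$ and an order $k$, and show that the inner tensor sum in \eqref{eq:hypergraph-dynamics} equals the $k$-th hyperedge sum in \eqref{eq:hyperedge-sum-formulation}. Summing over $k$ and adding $f(x_i)$ then gives the equivalence.

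First I split the sum over ordered tuples $(j_1,\dots,j_k)$ according to whether the tuple has distinct entries or not. If some $j_a=j_b$ with $a\neq b$, then $\{v_{j_1},\dots,v_{j_k}\}$ has fewer than $k$ elements. So it cannot be the tail of a B-arc in $E_k$, and Definition~\ref{def:adjacency-tensor} gives $\mathsf A^{(k)}_{ij_1\cdots j_k}=0$. A similar argument applies if some $j_a=i$. By Definition~\ref{def:directed-hyperarc} tails and heads are disjoint, so again no B-arc matches and the entry is zero. Hence only tuples of $k$ distinct indices, all different from $i$, contribute.

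Next I group the remaining tuples by their underlying set. Each $k$-element set $T\subset V\setminus\{v_i\}$ is the image of exactly $k!$ ordered tuples, namely all orderings of its elements. For every such ordering the tensor entry is the same: it equals $w(e)/k!$ if $e=(T\to i)\in E_k$ and $0$ otherwise. This is the symmetry in tail indices noted in Remark~\ref{rem:normalization-k-factorial}. Because $\phi_k$ is assumed symmetric in its $k$ tail arguments, $\phi_k(x_{j_{\sigma(1)}},\dots,x_{j_{\sigma(k)}},x_i)$ is independent of the permutation $\sigma$, and we may write it as $\phi_k((x_j)_{j\in T},x_i)$. The $k!$ terms indexed by orderings of $T$ are therefore identical, and together they contribute $k!\cdot\frac{w(e)}{k!}\,\phi_k((x_j)_{j\in T},x_i)=w(e)\,\phi_k((x_j)_{j\in T},x_i)$ when $(T\to i)\in E_k$, and $0$ otherwise.

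Summing over all $k$-subsets $T$ then gives exactly $\sum_{e=(T\to i)\in E_k}w(e)\,\phi_k((x_j)_{j\in T},x_i)$. Since the identity holds for every $i$ and every state $\mathbf x$, the two vector fields coincide, and so do the two ODE systems.

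The step needing the most care is the first one: checking that ordered tuples with repeated indices, or with an index equal to the head, really carry zero weight. This relies on the B-hypergraph conventions (tail size exactly $k$, tail disjoint from head) and on defining the tensor only through genuine hyperarcs. Once those vanishing terms are excluded, the rest is orbit counting under the symmetric group $S_k$, and the $1/k!$ normalization exactly cancels the orbit size.
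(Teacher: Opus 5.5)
Your proof is correct and follows the same route as the paper's. For each hyperarc $(T\to i)\in E_k$ you collect its $k!$ orderings, use tail-symmetry of $\phi_k$ to make those terms identical, and let the $1/k!$ normalization cancel the orbit size. Your explicit check that tuples with a repeated index, or with an index equal to the head, carry zero weight is something the paper leaves implicit in the definition of $\mathsf{A}^{(k)}$. It adds rigor but does not change the argument.
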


\begin{proof}
Fix \(i\) and \(k\). For any hyperedge \(e=(T\to i)\) with \(|T|=k\), the tail set \(T\) appears in \eqref{eq:hypergraph-dynamics} through the \(k!\) permutations of its elements. Since
$
A^{(k)}_{i j_1\cdots j_k}=\frac{w(e)}{k!}
$
for each such ordering (see \eqref{eq:adjacency-tensor-def}), the total contribution of \(e\) to \eqref{eq:hypergraph-dynamics} is
$
\frac{w(e)}{k!}
\sum_{\pi\in S_k}
\phi_k(x_{j_{\pi(1)}},\dots,x_{j_{\pi(k)}},x_i),
$
where $\pi \in S_k$ is a permutation on $k$ symbols. By symmetry of \(\phi_k\) in the tail arguments, each term in the sum is identical, so this reduces to
$
\frac{w(e)}{k!}\cdot k!\,\phi_k\big((x_j)_{j\in T},x_i\big)
=
w(e)\,\phi_k\big((x_j)_{j\in T},x_i\big).
$
Summing over all \(k\)-hyperedges with head \(i\) yields \eqref{eq:hyperedge-sum-formulation}.
\end{proof}


\subsection{Aggregator-Mixer coupling structure}

Although there is a lot of flexibility in choosing the coupling functions $\phi_k$ in \eqref{eq:hypergraph-dynamics}, many applications share a common structure in which the states of the tail nodes are first aggregated into a collective signal, then mixed into the head node dynamics. To capture this structure, we propose the following flexible  class of coupling functions:

\begin{align}\label{eq:coupling-function-class}
    \phi_k(x_{j_1},\cdots,x_{j_k},x_i) =  M_k \Theta_k\big(  \alpha_k \psi_k(x_{j_1},\cdots,x_{j_k})   + \beta_k h_k(x_i) \big), 
\end{align}

where:

\begin{itemize}
    \item $\psi_k:(\mathbb{R}^d)^k \to \mathbb{R}^d$ is a permutation-invariant \emph{aggregator} or \emph{tail function}.

    \item $\Theta_k :\mathbb{R}^d \to \mathbb{R}^d$ is the \emph{mixer}. This function could be linear or nonlinear.

    \item $h_k: \mathbb{R}^d \to \mathbb{R}^d$ is  the \emph{head function}.

    \item $M_k \in \mathbb{R}^{d \times d}$ is the mixing matrix.

     \item $\alpha_k, \beta_k \in [-1,1]$ are \emph{gates}.
\end{itemize}

From a modeling standpoint, there is a lot of versatility in the use of the mixing matrix $M_k$. For instance,

\begin{enumerate}
    \item It can encode the \emph{coupling strength parameter} whenever it is a multiple of the identity matrix, namely $M_k = \lambda_k I_d$, for some $\lambda_k \in \mathbb{R}$.

    \item It can model a \emph{channel-wise selector operator} that controls cross-channel coupling as in \cite{mouyebe2025coupling}.

    \item It  can encode the \emph{species diffusion matrix} in reaction-diffusion systems. \emph{This is the point of view we will adopt  throughout this paper, where $M_k$ will be the diagonal matrix of diffusion coefficients}.

    \item Lastly, it can be factored into a product of relevant matrices to encode any combination of the above accordingly.
\end{enumerate}

\subsection{Relation to existing models and unifying perspective}

The framework
\eqref{eq:hypergraph-dynamics}--\eqref{eq:coupling-function-class}
subsumes a variety of classical dynamical systems as special cases while providing a systematic mechanism for extending them to directed hypergraph interactions.

In the rank-$2$ case (i.e.\ $\rho=2$), the adjacency tensor reduces to the standard adjacency matrix associated with directed graphs. Choosing linear aggregator and mixing functions,
\begin{equation}
    \psi(x)=x, \qquad  \Theta=\mathrm{Id}
\end{equation}
the framework recovers classical diffusive or consensus-type coupling on directed networks
\cite{olfati2004consensus,ren2005consensus,arenas2008synchronization}.

For scalar phase variables, suitable choices of linear aggregator, head function, and trigonometric mixer recover Kuramoto-type dynamics \cite{kuramoto2005self,acebron2005kuramoto,strogatz2000kuramoto}. For example, taking
\begin{equation}
    \Theta(x)=\sin(x)
\end{equation}
in the rank-$2$ setting yields the classical Kuramoto model on directed graphs, while the case $\rho>2$ produces higher-order generalizations of Kuramoto dynamics on directed hypergraphs.  More details are provided in section~ \ref{app:trigonometric-coupling} in the appendix.

The framework also encompasses tensor-based multilinear hypergraph systems previously studied in \cite{chen2021controllability}. In particular, choosing
\begin{equation}
    \Theta_k=\mathrm{Id}, \qquad h_k\equiv 0
\end{equation}
together with the multiplicative (Hadamard product) aggregator
\begin{equation}
    \psi_k(x_{j_1},\dots,x_{j_k})= x_{j_1}\odot x_{j_2}\odot \cdots \odot x_{j_k}
\end{equation}
recovers and extends multilinear tensor interactions to  directed hypergraphs.

When combined with local reaction terms $f$, diagonal mixing matrices $M_k$, linear mixers $\Theta_k=\mathrm{Id}$, and diagonal head functions, the framework also recovers   reaction-diffusion systems on network structures \cite{nakao2010turing,muolo2023turing}. 

In general, the proposed aggregator-mixer formulation provides a flexible architecture for modeling nonlinear multiway interactions. Appropriate choices of aggregator, mixer, and head functions allow the framework to capture threshold effects, saturation phenomena, cooperative interactions, and other nonlinear mechanisms arising in biological, social, and physical systems.
The few examples given above illustrate that the proposed framework is not tied to a single dynamical model, but instead it  provides a unifying  perspective on higher-order dynamical systems. See Appendix~\ref{app:aggr-mixer} for additional information.

\subsection{Diffusive case and Laplacian dynamics}

We now specialize the general framework above to the diffusive coupling regime, and we show how a natural Laplacian-type structure emerges at the tensorial level. This case provides the direct higher-order analogue of classical Laplacian-driven dynamics on graphs. It plays a central role in diffusion-induced instability analysis, and it clarifies the structural origin of the operators appearing in the subsequent linear and weakly nonlinear analyses.

\subsubsection{Diffusive coupling }
We say that the coupling function in~\eqref{eq:hypergraph-dynamics} is \emph{diffusive} if for each interaction order $k$, it  takes the form

\begin{equation}\label{eq:diffusive-coupling}
 \phi_k(x_{j_1},\dots,x_{j_k},x_i)
=
 M_k
\Big(
\psi_k(x_{j_1},\dots,x_{j_k})
-
\psi_k^{\Delta}(x_i)
\Big).
\end{equation}
This corresponds to choosing  $\Theta_k=\mathrm{Id}$, and a \emph{diagonal head function} $h_k(x_i) = \psi_k^{\Delta}(x_i) \coloneqq  \psi_k(x_i,\cdots,x_i)$ in \eqref{eq:coupling-function-class} for some smooth and permutation-invariant aggregator $\psi_k$. The defining feature of~\eqref{eq:diffusive-coupling} is that the coupling depends only on differences between aggregated tail states and the head state, ensuring that the interaction vanishes identically  on homogeneous configurations \eqref{eq:sync-manifold}.

\begin{definition}
    The synchronization manifold $\mathcal{S}$ of  system \emph{(\ref{eq:hypergraph-dynamics})} is the subspace of  the joint state space along which the states of all agents coincide. More specifically,   
    \begin{align}\label{eq:sync-manifold}
        \mathcal{S} &\coloneq \big\{ (x_1,\cdots,x_N) \in \mathbb{R}^{Nd} ~\rvert~ x_i = x_j, ~\forall~i,j = 1,\cdots, N  \big\} 
    \end{align}
\end{definition}

Note that coupling \eqref{eq:diffusive-coupling} vanishes on the synchronization  manifold \eqref{eq:sync-manifold}. In fact, this is one of the most important properties of \emph{diffusive coupling}.

\subsubsection{Laplacian tensor formulation}\label{subsec:laplacian-tensor-formulation}

Substituting \eqref{eq:diffusive-coupling} into \eqref{eq:hypergraph-dynamics}, we get  a higher-order reaction-diffusion dynamics involving the Laplacian tensor as follows

\begin{align}\label{eq:reaction-diffusion-on-hypergraph}
        \dot{x}_i  
         &= f(x_i) + \sum_{k=1}^{\rho-1}{ \sum_{j_1,\cdots, j_k}{ \mathsf{A}^{(k)}_{ij_1\cdots j_k} M_k \left (\psi_k(x_{j_1},\cdots,x_{j_k}) - \psi_k^{\Delta}(x_i) \right) } }      \notag\\
         &= f(x_i) - \sum_{k=1}^{\rho-1}{ \sum_{j_1,\cdots, j_k} \left (d_\mathrm{in}^{(k)}(v_i)\mathsf{\delta}_{ij_1\cdots j_k} - \mathsf{A}^{(k)}_{ij_1\cdots j_k} \right) M_k\psi_k(x_{j_1},\cdots,x_{j_k}) }          \notag\\
        &= f(x_i) - \sum_{k=1}^{\rho-1}{ \sum_{j_1,\cdots, j_k} \mathsf{L}^{(k)}_{ij_1\cdots j_k}M_k\psi_k(x_{j_1},\cdots,x_{j_k}) }        
\end{align}

where,   $d_{in}^{(k)}(v_i)$ is the in-degree of the head node $v_i$ as in Definition \ref{def:degrees_in--and--out}, $~\mathsf{\delta}$ is the order-$(k+1)$ Kronecker  diagonal tensor defined as follows

\begin{align}\label{eq:kronecker-tensor}
    \mathsf{\delta}_{i j_1 \cdots j_k} := 
\begin{cases}
1 & \text{if } j_1=\cdots=j_k = i,\\
0 & \text{otherwise}.
\end{cases}
\end{align}

and, $\mathsf{L^{(k)}} = \mathsf{D}^{(k)} - \mathsf{A}^{(k)}$ is the order-$(k+1)$ \emph{Laplacian tensor} associated with the uniform B-component $\mathcal{H}_k$ at interaction order $k$. $\mathsf{D}^{(k)}$ is the order-$(k+1)$ diagonal \emph{in-degree tensor} at interaction order $k$ defined as follows

\begin{align}\label{eq:in_Degree-tensor}
    \mathsf{D}^{(k)}_{i j_1 \cdots j_k} := 
\begin{cases}
d_\mathrm{in}^{(k)}(v_i) & \text{if } j_1=\cdots=j_k = i,\\
0 & \text{otherwise}.
\end{cases}
\end{align}

\subsection{Model specification }\label{sec:model-specification}

DeepSet models \cite{zaheer2017deep}  provide a natural framework for modeling permutation-invariant multiway interactions through a combination of elementwise transformations and symmetric pooling operations. The general structure of such aggregators takes the form

\begin{align}\label{eq:DeepSet-architecture}
    \psi_k = \sigma_k \circ \text{mean} \circ \lambda_k
\end{align}

where $\lambda_k : \mathbb{R}^d \to \mathbb{R}^d$ denotes an embedding transformation and $\sigma_k : \mathbb{R}^d \to \mathbb{R}^d$ represents an elementwise activation.

Throughout this work, we set the embedding map to the identity transformation, $\lambda_k = \mathrm{Id}$, and we concentrate on the simplified mean-pooling followed by an elementwise activation structure as follows

\begin{align}\label{eq:deepSet-aggregator-model}
    \psi_k(x_{j_1}, \ldots, x_{j_k}) = \sigma_k\left(\frac{1}{k}\sum_{r=1}^k x_{j_r}\right)
\end{align}

with $\sigma_k$ chosen as a \emph{sigmoid}, \emph{tanh}, \emph{Hill-type} nonlinearity, or any other sufficiently smooth \emph{nonlinear} saturating activation. This particular choice preserves the permutation-invariant properties characteristic of DeepSet-type models, while isolating the role of nonlinear activation in shaping diffusion and pattern formation, which is the focus of our subsequent analysis.

Under this specification, the coupling term takes the form

\begin{align}\label{eq:deepSet-coupling-function}
    \phi_k(x_{j_1}, \ldots, x_{j_k}, x_i) = M_k\left[\sigma_k\left(\frac{1}{k}\sum_{r=1}^k x_{j_r}\right) - \sigma_k(x_i)\right]
\end{align}

which will be the form used for the analysis   throughout the remainder of the paper.
We note that the activation $\sigma$ and the mixing matrix $M$ are fixed and independent of the interaction order $k$, and all  $k$-dependence enter only through the hypergraph structure.

The framework introduced in this section separates structure (encoded by the tensors $\mathsf A^{(k)}$) from dynamics (encoded by $f$, $\sigma$, and $M$). This separation will allow us to identify which contractions of $\mathsf A^{(k)}$ control linearized dynamics near homogeneous equilibria in \eqref{eq:sync-manifold}, and which higher-order contractions enter nonlinear saturation. In particular, in Section~\textcolor{red}{\ref{sec:linear-onset}} and Sec~\textcolor{red}{\ref{sec:nonlinear-saturation}} we will show that different dynamical regimes are associated with different tensorial marginals of the same underlying hypergraph structure.

\section{Linearization and Pattern Onset}\label{sec:linear-onset}

In this section,  we derive the linearization of the hypergraph reaction-diffusion system \eqref{eq:reaction-diffusion-on-hypergraph}  about a homogeneous equilibrium, namely one that belongs to the synchronization manifold \eqref{eq:sync-manifold}, and  we show that it depends only on the \emph{first tail moment}  statistics of the underlying hypergraph structure. This yields a  Laplacian operator  governing linear stability and reminiscent of graph settings. It also establishes a rigorous validity boundary for graph-based surrogates, and  provides a rigorous justification for why clique-expansion reductions  are quite successful at the level of pattern formation onset. This result forms the basis for all subsequent analysis.

We start with the following useful definition and lemma. The choice of the adjective "canonical" in Definition~ \ref{def:l-pointed-CHOM} is justified by the fact the object therein described  arises naturally from the dynamics as we will see later.

\begin{definition}[Canonical Higher Order Matrix]\label{def:l-pointed-CHOM}
    Let $\mathsf{T} \in I_0\times I_1 \times \cdots \times I_k$ be an order-($k+1$) tensor, where $I_r = \mathbb{R}^N$. For $l \in \{1,\cdots, k\}$, we define the $k$-th order l-pointed Canonical Higher Order Matrix or $\hat{l}$-CHOM of $\mathsf{T}$ as the following tensor contraction using tensor notation in \cite{kolda2009tensor}
    \begin{align}
        T^{\hat{l}} &= \langle \mathsf{T},\mathsf{1}_{k-1} \rangle_{ \{1,\cdots, l-1,l+1,\cdots, k\}}  \\
        &= \mathsf{T} \bar{\times}_{-\{0,l \}} \left\{\mathbf{1} \right\}
    \end{align}
where  $\mathbf{1}$ is the $N$-dimensional vector of all-ones, and $\mathsf{1}_{k-1} = \mathbf{1}^{\otimes (k-1)}$ is the order-(k-1) tensor of all-ones. 
\end{definition}

\begin{lemma}[Symmetry and Contraction Invariance]\label{lem:symmetry-and-contraction-invariance}
    Let $\mathsf{T} \in \mathbb{R}^{n_0\times n^k}$  be an order-($k+1)$ tensor written as $\mathsf{T}_{ij_1\cdots j_k}$ in coordinates, where $i \in \{1,\cdots,n_0\}$ and $j_l \in \{1,\cdots,n\}$ for all $1\le l \le k$. Assume $\mathsf{T}$ is symmetric in the last $k$ tail indices, namely for every permutation $\tau \in S_k$, we have $\mathsf{T}_{ij_1\cdots j_k} = \mathsf{T}_{ij_{\tau(1)}\cdots j_{\tau(k)}}$. Then the $\hat{l}$-CHOM of $\mathsf{T}$ is independent of $l$. More specifically, for any $l, l' \in \{1,\cdots,k\}$, we have $T^{\hat{l}}_{ij} = T^{\hat{l}'}_{ij}~$ for all $i,j$.
 \end{lemma}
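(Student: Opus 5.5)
I will write the $\hat{l}$-CHOM explicitly in coordinates and then relabel summation indices using the tail symmetry of $\mathsf{T}$. By Definition~\ref{def:l-pointed-CHOM}, contracting every tail mode except the $l$-th against $\mathbf{1}$ gives
\begin{equation*}
T^{\hat{l}}_{ij} \;=\; \sum_{\substack{j_1,\dots,j_k\\ j_l = j}} \mathsf{T}_{ij_1\cdots j_k},
\end{equation*}
where the sum runs over all free tail indices $j_r$ with $r\neq l$, each ranging over $\{1,\dots,n\}$. The first step is to record this formula. It amounts to reading the mode products $\bar{\times}_{-\{0,l\}}\{\mathbf{1}\}$ as summations over the corresponding indices.

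Next, fix $l\neq l'$ and let $\tau\in S_k$ be the transposition exchanging $l$ and $l'$. By the symmetry hypothesis, $\mathsf{T}_{ij_1\cdots j_k}=\mathsf{T}_{ij_{\tau(1)}\cdots j_{\tau(k)}}$ for every index tuple. Substituting this into the expression for $T^{\hat{l}}_{ij}$ gives a sum of $\mathsf{T}_{ij_{\tau(1)}\cdots j_{\tau(k)}}$ over tuples with $j_l=j$. In the permuted tuple, the fixed value $j$ now occupies position $l'$, and the remaining positions carry the other free indices. All of these free indices range over the full set $\{1,\dots,n\}$, because every tail mode has the same dimension $n$.

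I would then apply the bijective change of variables $(j_1,\dots,j_k)\mapsto(j_{\tau(1)},\dots,j_{\tau(k)})$. This maps the index set $\{j_l=j\}$ onto $\{j_{l'}=j\}$, so the sum becomes exactly $T^{\hat{l}'}_{ij}$. The conclusion holds for all $i$ and $j$.

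The only step needing care is this reindexing. One must check that it is a bijection between the constrained index sets, and that the identical tail dimensions $n$ make the ranges match. Everything else is bookkeeping, and the case $k=1$ is trivial since then $l=l'=1$.
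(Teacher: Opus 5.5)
Your proof is correct and matches the paper's argument: the paper also writes $T^{\hat{\ell}}_{ij}$ as a sum over $\mathcal{I}_\ell(j)=\{(j_1,\dots,j_k): j_\ell=j\}$, uses the transposition swapping $\ell$ and $\ell'$ as a bijection $\mathcal{I}_\ell(j)\to\mathcal{I}_{\ell'}(j)$, and uses tail symmetry to match corresponding terms. Your explicit check that $j_{\tau(l')}=j_l=j$, so that the permuted tuple carries $j$ in position $l'$, is the one detail the paper leaves implicit.
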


\begin{proof}
    See Appendix~\ref{app:der-lem-sym-and-contraction-invariance}
\end{proof}

Lemma~\ref{lem:symmetry-and-contraction-invariance} above  allows us to unambiguously define the notion of tail moments of tail-symmetric tensors. As we will see in this section and the next, these tail moments play a fundamental role in the mechanism of pattern formation, both at instability onset and in the subsequent nonlinear regime.

\subsection{Structural visibility hierarchy}

\begin{definition}[$p$-th tail moment tensor]\label{def:pth-tail-moment}
    Let $\mathcal H_k=(V,E_k,w)$ be a $k$-uniform B-hypergraph. For an integer $p \in \{ 0,1, \cdots, k\}$, the $p$-th tail moment tensor is the order-(p+1) tensor
    \begin{align}\label{eq:pth-tail-moment-tensor}
\mathsf{T}^{(k,p)}_{ij_1\cdots j_p} \coloneqq 
\sum_{\substack{e=(T\rightarrow i)\in E_k, \\ \{j_1,\cdots,j_p\}\in T}}
\frac{w(e)}{k^p}
    \end{align}
\end{definition}

\paragraph{Structural Dictionary:}The tensors $\mathsf T^{(k,p)}$ define a hierarchy of tail co-occurrence
moments associated with order-$k$ directed hyperedges. The first few tail
moments recover familiar structural quantities:

\begin{align}
\mathsf T^{(k,0)}
&= d_\mathrm{in}^{(k)},
&& \text{(in-degree vector, see~\ref{def:in-degree})}\\
\mathsf T^{(k,1)}_{ij}
&= A_{ij}^{(k)},
&& \text{(exposure matrix/$k$-th order CHOM, see~\ref{eq:chol-matrix-decomposition})}\\
\mathsf T^{(k,2)}_{ij\ell}
&= \mathsf{P}_{ij\ell}^{(k)},
&& \text{(pair packing tensor, see~\ref{eq:pair-n-triple-packing-tensors})}\\
\mathsf T^{(k,3)}_{ij\ell m}
&= \mathsf{T}_{ij\ell m}^{(k)},
&& \text{(triple packing tensor, see~\ref{eq:pair-n-triple-packing-tensors})}
\end{align}
More generally,   $\mathsf T^{(k,p)}$ encodes  $p$-fold tail co-occurrence
statistics for order-$k$ hyperedges pointing to node $v_i$. Note that only the off-diagonal (i.e. distinct tail indices) elements capture genuine $p$-way interactions.

The tail moments are related to contractions of the adjacency tensor as follows

\begin{proposition}[$p$-th tail moments as  tensor contraction  ]\label{prop:pth-tail-moment-as-tensor-contraction}
 Let $\mathsf{A}^{(k)}$
denote the order-$(k+1)$ adjacency tensor of the $k$-uniform B-component $\mathcal{H}_k$. For an integer
$p\in\{0,1,\ldots,k\}$ and distinct tail indices $j_1,\dots,j_p$,  define the order-$(p+1)$ tensor
\begin{equation}
    \mathsf{A}^{(k,p)} = 
\frac{(k)_p}{k^p}
\left\langle
\mathsf{A}^{(k)},
\mathsf{1}_{k-p}
\right\rangle_{\{p+2,\dots,k+1\}},
\end{equation}

where $(k)_p=k(k-1)\cdots(k-p+1)$ is the falling factorial and
$\mathsf{1}_{k-p}$ is the all-ones tensor of order $k-p$.

Then the $p$-th tail moment tensor satisfies
\begin{equation}\label{eq:mth-order-tail-marginal-of-adjacency-tensor}
    \mathsf{T}^{(k,p)}_{i j_1\cdots j_p} = \mathsf{A}^{(k,p)}_{i j_1\cdots j_p}.
\end{equation}

Equivalently, letting $\mathbf 1\in\mathbb R^N$ denote the all-ones vector
and $\times_n$ the mode-$n$ product,
\begin{equation}
    \mathsf{T}^{(k,p)} =\frac{(k)_p}{k^p}
\,
\mathsf{A}^{(k)}
\times_{p+2}\mathbf 1^\top
\times_{p+3}\mathbf 1^\top \times
\cdots
\times_{k+1}\mathbf 1^\top.
\end{equation}

\end{proposition}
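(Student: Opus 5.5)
The identity will follow from a direct entrywise computation. We expand the contraction of $\mathsf A^{(k)}$ against the all-ones tensor and count, hyperarc by hyperarc, how many ordered tail tuples contribute. Fix a head index $i$ and distinct tail indices $j_1,\dots,j_p$. By definition of the contraction over the last $k-p$ modes,
\[
\big\langle \mathsf A^{(k)},\mathsf 1_{k-p}\big\rangle_{i j_1\cdots j_p}=\sum_{j_{p+1},\dots,j_k}\mathsf A^{(k)}_{i j_1\cdots j_p j_{p+1}\cdots j_k}.
\]
By Definition~\ref{def:adjacency-tensor}, a summand is nonzero only if $\{j_1,\dots,j_k\}$ is, as a set with $k$ distinct elements, the tail $T$ of a hyperarc $e=(T\to i)\in E_k$; in that case it equals $w(e)/k!$. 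We will therefore regroup the sum according to the hyperarc $e$ that the full index tuple represents.

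The key counting step is as follows. Fix $e=(T\to i)$ with $\{j_1,\dots,j_p\}\subseteq T$; this is the intended meaning of the condition written $\{j_1,\dots,j_p\}\in T$ in Definition~\ref{def:pth-tail-moment}. The completions $(j_{p+1},\dots,j_k)$ producing $e$ are exactly the orderings of the $k-p$ remaining elements of $T\setminus\{j_1,\dots,j_p\}$, so there are $(k-p)!$ of them. If instead $\{j_1,\dots,j_p\}\not\subseteq T$, no completion produces $e$. Distinct hyperarcs with head $i$ have distinct tails, so no tuple is double counted. Hence
\[
\big\langle \mathsf A^{(k)},\mathsf 1_{k-p}\big\rangle_{i j_1\cdots j_p}=\sum_{\substack{e=(T\to i)\in E_k\\ \{j_1,\dots,j_p\}\subseteq T}}\frac{(k-p)!}{k!}\,w(e)=\frac{1}{(k)_p}\sum_{\substack{e=(T\to i)\in E_k\\ \{j_1,\dots,j_p\}\subseteq T}} w(e).
\]
Multiplying by $(k)_p/k^p$ gives exactly $\mathsf T^{(k,p)}_{i j_1\cdots j_p}$ from \eqref{eq:pth-tail-moment-tensor}, which proves \eqref{eq:mth-order-tail-marginal-of-adjacency-tensor}. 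The mode-product form is then just a rewriting of the same contraction, since $\times_n\mathbf 1^\top$ sums out mode $n$. Because $\mathsf A^{(k)}$ is symmetric in its tail indices (Remark~\ref{rem:normalization-k-factorial}), the choice of which $k-p$ tail modes are contracted is immaterial, in the spirit of Lemma~\ref{lem:symmetry-and-contraction-invariance}.

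Finally, we check the edge cases and flag the main obstacle. For $p=0$ the formula reduces to $\sum_{e}w(e)=d^{(k)}_{\mathrm{in}}(v_i)$, consistent with the structural dictionary. For $p=k$ there is no contraction and the prefactor $k!/k^k$ cancels the $1/k!$ normalization as required. The main obstacle is not computational but one of interpretation. The statement restricts to distinct tail indices because, for repeated indices among $j_1,\dots,j_p$, the adjacency tensor vanishes, whereas the set-membership condition in Definition~\ref{def:pth-tail-moment} would still count hyperarcs. The proof should therefore state explicitly that the identity is claimed only off the tail diagonal, and read $\{j_1,\dots,j_p\}\in T$ as the inclusion $\subseteq T$ of a $p$-element set.
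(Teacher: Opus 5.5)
Your proof is correct and matches the paper's argument: each hyperarc $e=(T\to i)$ with $\{j_1,\dots,j_p\}\subseteq T$ contributes $(k-p)!\,w(e)/k! = w(e)/(k)_p$ to the contraction, and rescaling by $(k)_p/k^p$ gives $\mathsf T^{(k,p)}$. Your added points are sound refinements of the same computation: no double counting, the edge cases $p=0$ and $p=k$, and the caveat that the mode-product identity holds only at distinct tail indices.
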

\begin{proof}
    See Appendix~\ref{app:der-prop-p-tail-moments-as-tensor-contraction}
\end{proof}

\paragraph{Structural Visibility:} One of the main take away of the the present work is establishing that \emph{different asymptotic orders of the dynamical reduction reveal different tail moments}. Figure~\ref{fig:structural-visibility} summarizes this principle.

\vspace{0.7em}

\begin{figure}[H]
    \centering
    \begin{tikzpicture}[node distance=1.1cm, every node/.style={font=\small}]
    \node[draw, rounded corners, fill=blue!20, inner sep=5pt] (A)
    {$\mathsf{A}^{(k)}\sim \mathsf{T}^{(k,k)}$};
    
    \node[draw, rounded corners, fill=green!20,
          below left=0.9cm and 1.7cm of A, inner sep=5pt] (T1)
    {$\mathsf{T}^{(k,1)}$};
    
    \node[draw, rounded corners, fill=yellow!20,
          below=0.9cm of A, inner sep=5pt] (T2)
    {$\mathsf{T}^{(k,2)}$};
    
    \node[draw, rounded corners, fill=orange!20,
          below right=0.9cm and 1.7cm of A, inner sep=5pt] (T3)
    {$\mathsf{T}^{(k,3)}$};
    
    \node[draw, dashed, rounded corners, fill=gray!10,
          right=of T3, xshift=1cm,
          inner sep=5pt] (Tp)
    {$\mathsf{T}^{(k,p)}$};

    \draw[-{Latex}] (A) -- (T1);
    \draw[-{Latex}] (A) -- (T2);
    \draw[-{Latex}] (A) -- (T3);
    \draw[gray,dashed,-{Latex}] (A) to[bend left=10] (Tp);
    \node[below=0.35cm of T1] {exposure};
    \node[below=0.65cm of T1] {(\emph{linear order})};
    \node[below=0.25cm of T2] {pair packing};
    \node[below=0.65cm of T2] {(\emph{quadratic order})};
    \node[below=0.25cm of T3] {triple packing};
    \node[below=0.65cm of T3] {(\emph{cubic order})};
    \node[below=0.25cm of Tp] {higher moments};
    \node[below=0.65cm of Tp] {(\emph{higher order})};
    \end{tikzpicture}
    \caption[Visibility hierarchy]{Visibility hierarchy structure illustrating the principle by which \emph{different asymptotic orders of the dynamical reduction reveal different tails moments}. This is one of the main take away of the present work. We show that at \emph{linear order}, the only structural information visible to the dynamics is \emph{exposure}, and everything else is discarded. Higher-order interactions only become relevant in the nonlinear regime. More specifically,  at \emph{quadratic order}, the dynamics sources \emph{pair packing} structural information encoded in the tail moment tensor $\mathsf{T}^{(k,2)}$, then contracts it with the critical directions to build the \emph{second} order coefficient $\beta$ of the reduced dynamics (see~\eqref{eq:amplitude-coefficients} ). At \emph{cubic order}, the dynamics  probes \emph{triple packing} structural information encoded in the tensor $\mathsf{T}^{(k,3)}$ in addition to pair packing information in $\mathsf{T}^{(k,2)}$ to build the \emph{cubic} order coefficient $\gamma$ through contraction with the critical directions as well. The process can be continued accordingly.}
    \label{fig:structural-visibility}
\end{figure}

\subsection{Linearization about a reference trajectory}

We linearize  \eqref{eq:reaction-diffusion-on-hypergraph} about a reference trajectory $\bar{\mathbf{x}}(t) = \left (\bar{x}_1(t),\cdots,\bar{x}_N(t)\right )$, this leads to the linear dynamics describing the evolution of small perturbations around $\bar{\mathbf{x}}(t)$ as follows:
\begin{align}\label{Linearized_eq_nonlinear_coupling}
    \begin{split}
        \dot{\delta\mathbf{x}} &= J(\bar{\mathbf{x}})\delta\mathbf{x}
    \end{split}
\end{align}
where 
\begin{align}
    J(\bar{\mathbf{x}}) = \big( J_{i,\lambda}(\bar{\mathbf{x}})\big)_{i,\lambda}
\end{align}
is the $Nd \times Nd$ block-matrix whose  $d \times d$ block  entry $J_{i,\lambda}$ represents the  \emph{sensitivity} of the right hand side ($RHS(i)$) of \eqref{eq:reaction-diffusion-on-hypergraph} with respect to the state $x_{\lambda}$. More specifically, for any $1\le \lambda \le N$ we have 
\begin{align}
    J_{i,\lambda}(\bar{\mathbf{x}}) = \frac{\partial RHS(i)}{\partial x_{\lambda}} \Big|_{\! (\bar{\mathbf{x}})} 
\end{align}
where,
\begin{align}\label{eq:rhs-sensitivity}
    \begin{split}
        \frac{\partial RHS(i)}{\partial x_{\lambda}} \Big|_{\! (\bar{\mathbf{x}})}  &=  \frac{\partial f}{\partial x} (\bar{x}_i)\frac{\partial x_i}{x_{\lambda}} -  \sum_{k=1}^{\rho-1}{ \sum_{j_1,\cdots, j_k} \mathsf{L}^{(k)}_{ij_1\cdots j_k}}M_k\sum_{l=1}^k{\frac{\partial \psi_k}{\partial x_l}}(\bar{x}_{j_1},\cdots,\bar{x}_{j_k})\frac{\partial x_{j_l}}{x_{\lambda}} \\
        &= \frac{\partial f}{\partial x} (\bar{x}_i)\delta_{i\lambda} - \sum_{k=1}^{\rho-1}\sum_{l=1}^{k}\sum_{j_l=1}^{N} \left
        (\sum_{\hat{\jmath}_l}{\mathsf{L}^{(k)}_{ij_1\cdots j_k} M_k\frac{\partial \psi_k}{\partial x_l}(\bar{x}_{j_1},\cdots,\bar{x}_{j_k})\delta_{j_l\lambda} }    \right) 
    \end{split}
\end{align}
The summation over $\hat{\jmath}_l$ means that we are summing over all the indices $j_r$ \emph{except} $j_l$, and $\delta_{ij}$ is the usual Kronecker delta.

We now constrain the reference trajectory $\bar{\mathbf{x}}(t)$  to lie in the \emph{synchronization manifold} $\mathcal{S}$ \eqref{eq:sync-manifold}, thus \eqref{eq:rhs-sensitivity} becomes 

\begin{align}\label{eq:rhs-sensitivity-on-sync-manifold}
    \begin{split}
        \frac{\partial RHS(i)}{\partial x_{\lambda}} \Big|_{\! (\bar{\mathbf{x}})}  &=   \frac{\partial f}{\partial x} (\bar{x})\delta_{i\lambda} -\sum_{k=1}^{\rho-1}\sum_{l=1}^{k}\sum_{j_l=1}^{N} \mathsf{L}^{(k),\hat{l}}_{ij_l}M_k\frac{\partial \psi_k}{\partial x_l}(\bar{x},\cdots,\bar{x})\delta_{j_l\lambda}  
    \end{split}
\end{align}

where $\mathsf{L}^{(k),\hat{l}} $ is the $k$-th order $\hat{l}$-CHOM of  the Laplacian tensor defined in section~\ref{subsec:laplacian-tensor-formulation}. In fact using Lemma~\ref{lem:symmetry-and-contraction-invariance}, one can show that this operator is \emph{independent of $l$}. So, we refer to it as the $k$-th order  CHOL (Canonical Higher Order Laplacian).


The $\lambda$-th column of the block Jacobian matrix $J(\bar{\mathbf{x}})$ on $\mathcal{S}$ therefore reads:
\begin{align}\label{eq:column-of-Jacobian-matrix}
         \mathbf{e}_{\lambda} \otimes \frac{\partial f}{\partial x}(\bar{x}) - \sum_{k = 1}^{\rho-1}\sum_{l=1}^{k}\left( \textsf{L}^{(k),\hat{l}} \otimes M_k \right) \left(\mathbf{e}_{\lambda} \otimes \frac{\partial \psi_k}{\partial x_l}(\bar{x},\cdots,\bar{x})\right)
\end{align}

where $\left\{ \mathbf{e} \right\}_{1\le i\le N}$ is the canonical basis of $\mathbb{R}^N$. Putting everything together, we get the final form of the Jacobian matrix $J(\bar{\mathbf{x}})$ as follows:

\begin{align}
        J(\bar{\mathbf{x}}) &= I_N \otimes \frac{\partial f}{\partial x} (\bar{x}) - \sum_{k = 1}^{\rho-1}\sum_{l=1}^{k}\left( \mathsf{L}^{(k),\hat{l}} \otimes M_k \right) \left(I_N \otimes \frac{\partial \psi_k}{\partial x_l} (\bar{x},\cdots,\bar{x})\right) \notag\\
        &= I_N \otimes \frac{\partial f}{\partial x} (\bar{x})  - \sum_{k = 1}^{\rho-1}\left({ L^{(k)} \otimes M_k\sum_{l=1}^{k}\frac{\partial \psi_k}{\partial x_l} (\bar{x},\cdots,\bar{x}) }\right)  \label{eq:jacobian-derivation-nonlinear-diffusion}
\end{align}

where by Lemma~\ref{lem:symmetry-and-contraction-invariance},  the \(k\)-th order CHOL matrix \(L^{(k)}\) satisfies

\begin{equation}
    L^{(k)} = \mathsf{L}^{(k),\hat{l}^*} for \emph{ any } l^* \in \{1,\dots,k\}
\end{equation}

 It is worth pointing out that this operator is \emph{not symmetric} in general. In the undirected setting however, it is closely related to the \emph{Laplacian of the clique-expansion} of the $k$-uniform component of the underlying hypergraph. We will not develop this connection further here, and leave it for future communications.
 
 From tensor contraction of the  decomposition $\mathsf{L}^{(k)} = \mathsf{D}^{(k)} - \mathsf{A}^{(k)}$, together with   tail index symmetry in Lemma~\ref{lem:symmetry-and-contraction-invariance} we get
 \begin{align}\label{eq:chol-matrix-decomposition}
     L^{(k)} = D_\mathrm{in}^{(k)} - A^{(k)} \qquad 1 \le k \le N
 \end{align}

where $D_\mathrm{in}^{(k)}$ is the diagonal matrix of node in-degrees  at interaction order $k$, and $A^{(k)}$ is the $k$-th order CHOM of the adjacency tensor $\mathsf{A}^{(k)}$. Therefore, the vector $d_\mathrm{in}^{(k)} = \mathrm{diag}\left(D_\mathrm{in}^{(k)}\right)$ and the matrix $A^{(k)}$ can be viewed respectively as the \emph{zeroth} and  \emph{first tail moments}  of the tensor $\mathsf{A}^{(k)}$ by Definition~\ref{def:pth-tail-moment}. 


 \subsection{Working assumptions}

Although the linearization derived above applies to general multiway coupling functions in \eqref{eq:diffusive-coupling}, we now focus attention on a regime in which the \emph{mechanism governing pattern onset and saturation} can be clearly isolated.

\subsubsection{Order-independent coupling}\label{subsubsec:assumption-order-independent-coupling}
From now onward, we fix the aggregator in the class \eqref{eq:deepSet-aggregator-model} defined in section~\ref{sec:model-specification}, and we assume that it is independent of the interaction order $k$. This is equivalent to fixing the same activation function $\sigma$ across all the interaction layers. In addition, we also fix the mixing matrix $M$. Order dependence   enters only through the directed hypergraph structure and its relevant tensors.

\subsubsection{Regularity}\label{subsubsec:assumption-regularity}

The reaction term $f$ and activation function $\sigma$ are assumed to be  sufficiently smooth to admit a Taylor expansion about the homogeneous equilibrium $\bar{\mathbf{x}}  \in \mathcal{S}$ up to  third order at least, as required for weakly nonlinear analysis.

\subsubsection{Codimension-1 steady bifurcation}\label{subsubsec:codimention-1-steady-bifurcation}

For clarity of the presentation, we  focus only  on  \emph{steady instability} in the present work. More specifically, we assume there exists a critical parameter value $p_c$ (e.g. inhibitor diffusion coefficient) at which the linearized operator $J(\bar{\mathbf{x}})$ has a simple eigenvalue at zero, while all remaining eigenvalues have strictly negative real parts. Equivalently,
\begin{align}\label{eq:steady-instability-onset-assumption}
    \dim \ker J(\bar{\mathbf{x}}) = 1 =  \dim \ker(J(\bar{\mathbf{x}}))^\dagger 
\end{align}

where $(J(\bar{\mathbf{x}}))^\dagger$ denotes the adjoint of the operator $J(\bar{\mathbf{x}})$ with respect to the standard Euclidean inner product.

\emph{Oscillatory (Hopf) instabilities} and higher-codimension bifurcations are not considered here and are left for future work.

\subsection{Linearization theorem}

Under the nonlinear mean-aggregator $\eqref{eq:deepSet-aggregator-model}$, the Jacobian $\eqref{eq:jacobian-derivation-nonlinear-diffusion}$ simplifies drastically. In particular, the matrix multiplying each operator $L^{(k)}$ is independent of the interaction order $k$. As a result, the Jacobian depends on the hypergraph only through its zeroth and first tail moments.

\begin{theorem}[First tail-moment reduction]\label{thm:linearization-theorem}

Let $\bar{\mathbf{x}}$ be a homogeneous equilibrium of  \eqref{eq:hypergraph-dynamics} under nonlinear mean coupling \eqref{eq:deepSet-coupling-function}. Then the linearized dynamics about $\bar{\mathbf{x}}$ reads
\begin{align}
    \dot{\delta \mathbf{x}} &= \left( I_N \otimes J_f - \sum_{k=1}^{\rho-1} L^{(k)} \otimes D_{\mathrm{eff}} \right) \delta\mathbf{x}
\end{align}
or equivalently,
\begin{align}\label{eq:linearization-full}
    \dot{\delta \mathbf{x}} &= \big( I_N \otimes J_f - L_{\mathrm{tot}} \otimes D_{\mathrm{eff}} \big) \delta \mathbf{x}
\end{align}

where \(J_f = Df(\bar{x})\),  \( D_{\mathrm{eff}} = MD\sigma(\bar{x})\) and \( L_{\mathrm{tot}} = \sum_{k=1}^{\rho-1} L^{(k)}\).

In particular, the hypergraph influences the Jacobian only through its zeroth and  first tail moments, namely  the exposure information.
\end{theorem}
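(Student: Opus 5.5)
Starting from the general Jacobian \eqref{eq:jacobian-derivation-nonlinear-diffusion} on the synchronization manifold, the plan is to specialize it to the mean aggregator \eqref{eq:deepSet-aggregator-model}, using the working assumption of \S\ref{subsubsec:assumption-order-independent-coupling} that $\sigma_k=\sigma$ and $M_k=M$ for all $k$. The only order-dependent factor left in each summand of \eqref{eq:jacobian-derivation-nonlinear-diffusion} is then $\sum_{l=1}^k \partial\psi_k/\partial x_l(\bar x,\dots,\bar x)$. This is where the argument really happens.

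The first step computes these partial derivatives. Write $\psi_k(x_{j_1},\dots,x_{j_k})=\sigma\big(\tfrac1k\sum_r x_{j_r}\big)$. The chain rule gives
\begin{equation*}
\frac{\partial \psi_k}{\partial x_l}(x_{j_1},\dots,x_{j_k}) = \frac1k\, D\sigma\Big(\tfrac1k\textstyle\sum_r x_{j_r}\Big).
\end{equation*}
On $\mathcal S$ the mean equals $\bar x$, so each partial is $\tfrac1k D\sigma(\bar x)$. Summing over $l=1,\dots,k$, the falling factor $1/k$ cancels the multiplicity $k$:
\begin{equation*}
\sum_{l=1}^k \frac{\partial \psi_k}{\partial x_l}(\bar x,\dots,\bar x)=D\sigma(\bar x),
\end{equation*}
which does not depend on $k$. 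Substituting back, each summand becomes $L^{(k)}\otimes M D\sigma(\bar x)=L^{(k)}\otimes D_{\mathrm{eff}}$, and we set $J_f=Df(\bar x)$. Bilinearity of the Kronecker product then lets us pull out the common right factor, so $\sum_k L^{(k)}\otimes D_{\mathrm{eff}}=L_{\mathrm{tot}}\otimes D_{\mathrm{eff}}$. This gives \eqref{eq:linearization-full}.

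There is one bookkeeping point to check here. The CHOL $L^{(k)}$ must be well defined, i.e.\ independent of the choice of $\hat l$. This follows from Lemma~\ref{lem:symmetry-and-contraction-invariance}, because $\mathsf L^{(k)}=\mathsf D^{(k)}-\mathsf A^{(k)}$ is tail-symmetric: $\mathsf A^{(k)}$ is tail-symmetric by Remark~\ref{rem:normalization-k-factorial}, and the diagonal tensor $\mathsf D^{(k)}$ is trivially so. We also need the consistency between the reaction-diffusion form \eqref{eq:reaction-diffusion-on-hypergraph} and \eqref{eq:hypergraph-dynamics}. It holds because $\psi^\Delta_k(x_i)=\sigma(x_i)$ for the mean aggregator, so \eqref{eq:deepSet-coupling-function} is indeed of diffusive form \eqref{eq:diffusive-coupling}.

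For the final claim, we use \eqref{eq:chol-matrix-decomposition}, $L^{(k)}=D^{(k)}_{\mathrm{in}}-A^{(k)}$, together with the structural dictionary and Proposition~\ref{prop:pth-tail-moment-as-tensor-contraction}. These identify $\mathrm{diag}(D_{\mathrm{in}}^{(k)})=\mathsf T^{(k,0)}$ and $A^{(k)}$ with the first tail moment $\mathsf T^{(k,1)}$, up to the normalization in Definition~\ref{def:pth-tail-moment}. Hence $J$ depends on $\mathcal H$ only through $\{\mathsf T^{(k,0)},\mathsf T^{(k,1)}\}_k$, and in fact only through their sums over $k$.

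The main obstacle I expect is normalization bookkeeping between the CHOM $A^{(k)}$ (a contraction of $\mathsf A^{(k)}$ with ones) and $\mathsf T^{(k,1)}$. Contracting $k-1$ tail slots of $\mathsf A^{(k)}$ gives $\sum_{e\ni j} w(e)/k$. This matches $\mathsf T^{(k,1)}$ via the factor $(k)_1/k^1=1$ in Proposition~\ref{prop:pth-tail-moment-as-tensor-contraction}. I would verify this count directly: each ordered tuple with $j$ in a fixed slot appears $(k-1)!$ times, and each carries weight $w(e)/k!$. I would verify the diagonal-degree term the same way, contracting the diagonal tensor $\mathsf D^{(k)}$, which yields $d_{\mathrm{in}}^{(k)}(v_i)$ on the diagonal.
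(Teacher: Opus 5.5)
Your proposal is correct and takes the same route as the paper. The paper's proof just specializes the general Jacobian \eqref{eq:jacobian-derivation-nonlinear-diffusion} under the order-independent coupling and regularity assumptions, and your explicit computation $\sum_{l=1}^k \partial\psi_k/\partial x_l(\bar x,\dots,\bar x)=D\sigma(\bar x)$ (with your check that the CHOM entries equal $\sum_{e\ni j} w(e)/k=\mathsf T^{(k,1)}_{ij}$) fills in exactly the step the paper leaves implicit when it asserts that the matrix multiplying each $L^{(k)}$ is independent of $k$.
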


\begin{proof}
It follows from \eqref{eq:jacobian-derivation-nonlinear-diffusion} under the working assumptions \eqref{subsubsec:assumption-order-independent-coupling} and \eqref{subsubsec:assumption-regularity}.
\end{proof}

The theorem above shows that linear stability and Turing onset depend only on the zeroth and first tail moments of the adjacency tensor. The linearized operator is fully determined by these moments. Higher-order tail moments are invisible at linear level, and they reappear only through nonlinear terms. This separation underlies the mechanism developed Section~\ref{sec:nonlinear-saturation}.

\subsection{Directed dispersion relation}

We start with the following Lemma~\ref{lem:spectrum-of-full-Jacobian} which characterizes the spectrum of the Jacobian in \eqref{eq:linearization-full}.

\begin{lemma}\label{lem:spectrum-of-full-Jacobian}
    Let $\bar{\mathbf{x}} \in \mathcal{S}$ be a homogeneous equilibrium of  \eqref{eq:hypergraph-dynamics} under nonlinear mean coupling \eqref{eq:deepSet-coupling-function}, $J_f = Df(\bar{x})$, $L_{\mathrm{tot}} \in \mathbb{R}^{N \times N}$ and $D_{\mathrm{eff}} \in \mathbb{R}^{d \times d}$. The spectrum $\sigma(J)$ of the jacobian $ J = I_N \otimes J_f - L_{\mathrm{tot}} \otimes D_{\mathrm{eff}} $ of the system  is given by

    \begin{align}\label{eq:spectrum-of-Jacobian}
        \sigma(J) &= \bigcup_{\lambda \in \sigma \left(L_{\mathrm{tot}} \right)} \sigma \left(J_f - \lambda\, D_{\mathrm{eff}} \right)
    \end{align}
\end{lemma}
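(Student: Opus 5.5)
The plan is to block-triangularize $J$ using a Schur decomposition of $L_{\mathrm{tot}}$. Since $L_{\mathrm{tot}}$ is not symmetric and need not be diagonalizable (the hypergraph is directed), I will not assume an eigenbasis. Instead I take the complex Schur form $L_{\mathrm{tot}} = U R U^{*}$, with $U\in\mathbb{C}^{N\times N}$ unitary and $R$ upper triangular, whose diagonal entries $r_{11},\dots,r_{NN}$ are the eigenvalues of $L_{\mathrm{tot}}$ listed with algebraic multiplicity. The whole argument then works over $\mathbb{C}$, which is harmless because both sides of \eqref{eq:spectrum-of-Jacobian} are sets of complex numbers.

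The key step is conjugating $J$ by the unitary matrix $U\otimes I_d$. By the mixed-product property of the Kronecker product, $(U\otimes I_d)^{*}(I_N\otimes J_f)(U\otimes I_d) = I_N\otimes J_f$ and $(U\otimes I_d)^{*}(L_{\mathrm{tot}}\otimes D_{\mathrm{eff}})(U\otimes I_d) = R\otimes D_{\mathrm{eff}}$. Hence $J$ is similar to $\widetilde J := I_N\otimes J_f - R\otimes D_{\mathrm{eff}}$.

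Because $R$ is upper triangular, $\widetilde J$ is block upper triangular with $d\times d$ blocks. Its $(a,b)$ block is
\begin{itemize}
\item $\delta_{ab}J_f - r_{ab}D_{\mathrm{eff}}$ in general,
\item zero for $a>b$,
\item $J_f - r_{aa}D_{\mathrm{eff}}$ on the diagonal.
\end{itemize}
The determinant of a block triangular matrix is the product of the determinants of its diagonal blocks. This gives
\[
\det(zI_{Nd}-J)=\det(zI_{Nd}-\widetilde J)=\prod_{a=1}^{N}\det\big(zI_d - J_f + r_{aa}D_{\mathrm{eff}}\big).
\]
So $z\in\sigma(J)$ exactly when $z\in\sigma(J_f-\lambda D_{\mathrm{eff}})$ for some $\lambda=r_{aa}\in\sigma(L_{\mathrm{tot}})$. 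This is \eqref{eq:spectrum-of-Jacobian}, and the product formula even records algebraic multiplicities.

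The only real obstacle is the non-normality of $L_{\mathrm{tot}}$ in the directed setting. The usual graph-Laplacian argument, which decomposes into eigenmodes $v_\alpha\otimes w$, would capture only one eigenvector per Jordan block. That suffices for the inclusion ``$\supseteq$'' but fails to rule out spurious spectrum for ``$\subseteq$''. The Schur/triangularization route above avoids the issue entirely. Alternatively, one could factor the characteristic polynomial after noting that $I_N\otimes J_f$ commutes with $L_{\mathrm{tot}}\otimes I_d$, but the triangular form is the cleanest.

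I will close with a remark tied to the codimension-one assumption \eqref{eq:steady-instability-onset-assumption}. When $\lambda$ is a semisimple eigenvalue with eigenvector $v$, and $w$ spans $\ker(J_f-\lambda D_{\mathrm{eff}})$, the vector $v\otimes w$ is an explicit critical eigenvector of $J$. Likewise, the left eigenvectors of $L_{\mathrm{tot}}$ tensored with the adjoint kernel give the critical left eigenvectors used later.
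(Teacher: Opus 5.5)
Your proposal is correct and follows essentially the same route as the paper: a complex Schur triangularization $U^*L_{\mathrm{tot}}U = R$, conjugation of $J$ by $U\otimes I_d$, and reading off the spectrum from the diagonal blocks $J_f - r_{aa}D_{\mathrm{eff}}$ of the resulting block upper triangular matrix. Your version is slightly more careful than the paper's: the paper writes $L_{\mathrm{tot}}$ where $J$ is meant in the conjugation step, and it leaves the block-determinant factorization implicit.
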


\begin{proof}
    The proof relies on the Schur decomposition (see, e.g., \cite{horn1945matrix,golub2013matrix,trefethen1997numerical}) as follows: Let $U^*L_{\mathrm{tot}} U = T$ for some unitary matrix $U$ and upper triangular matrix $T$. Then we have $(U^* \otimes I_d)\,L_{\mathrm{tot}} \,(U \otimes I_d) = I_N \otimes J_f - T \otimes D_{\mathrm{eff}}$. The latter matrix is block upper triangular with diagonal blocks $J_f - \lambda_i D_{\mathrm{eff}}$, where the $\lambda_i$'s are the eigenvalues of $L_{\mathrm{tot}}$ along the diagonal of $T$.
\end{proof}

\begin{definition}[Generalized dispersion relation]
\label{def:generalized-dispersion-relation}

For each
$
\lambda \in \mathbb{C},
$
define the linear operator $
A(\lambda) := J_f-\lambda D_{\mathrm{eff}}
\in \mathbb{R}^{d\times d}.
$
The associated \emph{generalized dispersion relation} is the function
\begin{equation}
\omega(\lambda) := \max\left\{ \Re(\nu)\,:\,
\nu\in\sigma\big(A(\lambda)\big) \right\}
\end{equation}
\end{definition}

By Lemma~\ref{lem:spectrum-of-full-Jacobian}, the spectrum of the Jacobian matrix 
$J = I_N\otimes J_f - L_{\mathrm{tot}}\otimes D_{\mathrm{eff}} 
$
is obtained by evaluating the spectra
$ \sigma(A(\lambda)) $
over all
$ \lambda\in\sigma(L_{\mathrm{tot}}). $
Thus, the homogeneous equilibrium is linearly stable if and only if
$
\omega(\lambda)<0,
\quad
\forall \lambda\in\sigma(L_{\mathrm{tot}}).
$
Instability onset occurs when
\begin{equation}
\max_{\lambda\in\sigma(L_{\mathrm{tot}})}
\omega(\lambda) = 0.
\end{equation}

In the undirected case,
$ L_{\mathrm{tot}}$ has a real spectrum, and
$\omega(\lambda)$
reduces to the classical real-valued dispersion relation. In contrast, directed interaction structures may yield a complex-valued spectrum, and this will require the dispersion relation to be evaluated over the complex plane. Moreover, since
$ L_{\mathrm{tot}} $
may also  be non-normal, transient amplification can occur even when
$\omega(\lambda)<0 $ for all
$ \lambda\in\sigma(L_{\mathrm{tot}})$.

\subsection{Linear indistinguishability}

Since the linearized operator in Theorem~\ref{thm:linearization-theorem} depends only on exposure structural information, hypergraphs that agree on their first tail moments are indistinguishable at linear level. We formalize this observation below.

\begin{definition}[Exposure-equivalence]\label{def:first-tail-moment-equivalence}

Two directed hypergraphs $\mathcal H_A$ and $\mathcal H_B$ are said to be exposure-equivalent or first-tail-moment equivalent if for every interaction order $k$, their first tail moments are equal:
\begin{align}
    A^{(k)}(\mathcal H_A)
    =
    A^{(k)}(\mathcal H_B),  \quad \text{ for all } k .
\end{align}

\end{definition}

Since $d_{in}^{(k)} = A^{(k)}\mathbf{1}$, then exposure-equivalence also implies equality of the zeroth tail moments.

\begin{corollary}[Linear indistinguishability]
\label{cor:linear-indistinguishability}

If two directed hypergraphs $\mathcal H_A$ and $\mathcal H_B$ are exposure-equivalent, then they induce identical linear operators. In particular, they have identical spectra, dispersion relations, and Turing onset thresholds.
\end{corollary}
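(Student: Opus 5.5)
The corollary is a direct consequence of Theorem~\ref{thm:linearization-theorem}, so the plan is to show that both linear operators are built from identical ingredients. Fix a common homogeneous equilibrium $\bar{\mathbf{x}}=\mathbf 1\otimes\bar x$ and the common dynamical data $f$, $\sigma$, $M$, as in the working assumptions of Section~\ref{subsubsec:assumption-order-independent-coupling}.

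We first check that $\bar{\mathbf{x}}$ is an equilibrium of both systems. Since diffusive coupling vanishes on $\mathcal S$, $\bar{\mathbf{x}}$ is an equilibrium whenever $f(\bar x)=0$, independently of the hypergraph. Hence $J_f=Df(\bar x)$ and $D_{\mathrm{eff}}=MD\sigma(\bar x)$ coincide for $\mathcal H_A$ and $\mathcal H_B$.

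Next we compare the graph part. By \eqref{eq:chol-matrix-decomposition}, $L^{(k)}=D^{(k)}_{\mathrm{in}}-A^{(k)}$. Exposure-equivalence gives $A^{(k)}(\mathcal H_A)=A^{(k)}(\mathcal H_B)$ for every $k$. It remains to confirm equality of the in-degree parts, using $d^{(k)}_{\mathrm{in}}=A^{(k)}\mathbf 1$ as remarked after Definition~\ref{def:first-tail-moment-equivalence}. This identity can be checked from Proposition~\ref{prop:pth-tail-moment-as-tensor-contraction}: contracting $\mathsf T^{(k,1)}$ over $j$ produces $k\cdot w(e)/k=w(e)$ per arc.

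This bookkeeping is the only place where a mismatch of normalizations, between the $1/k!$ in the adjacency tensor and the $1/k^p$ in the tail moments, could cause trouble. The point to verify is that the CHOM of $\mathsf A^{(k)}$ appearing in the Jacobian equals $\mathsf T^{(k,1)}$ up to the same $k$-dependent constant for both hypergraphs. The Jacobian actually carries $L^{(k)}\otimes M\sum_l\partial_l\psi_k$, and $\partial_l\psi_k=\tfrac1k D\sigma$ with $k$ such terms. The sum therefore collapses to $D_{\mathrm{eff}}$, and any leftover factor depends only on $k$, not on the hypergraph, so it is shared. With that settled, $L_{\mathrm{tot}}(\mathcal H_A)=L_{\mathrm{tot}}(\mathcal H_B)$.

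Finally, \eqref{eq:linearization-full} shows that the two Jacobians $I_N\otimes J_f-L_{\mathrm{tot}}\otimes D_{\mathrm{eff}}$ are literally equal matrices. Equality of the spectra follows immediately, and Lemma~\ref{lem:spectrum-of-full-Jacobian} makes this explicit as $\bigcup_{\lambda\in\sigma(L_{\mathrm{tot}})}\sigma(J_f-\lambda D_{\mathrm{eff}})$. The dispersion function $\omega$ of Definition~\ref{def:generalized-dispersion-relation} depends only on $J_f$ and $D_{\mathrm{eff}}$, and it is sampled on the same set $\sigma(L_{\mathrm{tot}})$, so both the stability criterion and the onset condition $\max_{\lambda}\omega(\lambda)=0$ coincide. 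Tracking a bifurcation parameter $p$, which enters only through $J_f$ or $D_{\mathrm{eff}}$, then yields the same critical value $p_c$, the same kernel $\ker J$ and the same adjoint kernel. This gives the identical thresholds and critical eigenspaces.
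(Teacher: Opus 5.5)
Your proposal is correct and follows the paper's reasoning. The corollary is read off directly from Theorem~\ref{thm:linearization-theorem}: exposure-equivalence gives equality of the $A^{(k)}$, hence (via $d^{(k)}_{\mathrm{in}}=A^{(k)}\mathbf 1$) of every $L^{(k)}$, so the Jacobians $I_N\otimes J_f-L_{\mathrm{tot}}\otimes D_{\mathrm{eff}}$ coincide. Your normalization paragraph is more cautious than it needs to be. Exposure-equivalence is defined through the very CHOM $A^{(k)}$ that enters $L^{(k)}$, and that CHOM equals $\mathsf T^{(k,1)}$ exactly, since each arc contributes $(k-1)!\,w(e)/k!=w(e)/k$; so no leftover factor arises.
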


\subsection{Interpretation}

Theorem~\ref{thm:linearization-theorem} reveals a strict hierarchy in the structural information encoded by the hypergraph. At linear level, the dynamics depends only on the first tail moment, and all higher-order tail moments are suppressed. Hypergraphs that coincide at this level are therefore indistinguishable with respect to stability and Turing onset.
The distinctions between such hypergraphs emerge only beyond onset, where nonlinear interactions reintroduce higher-order tail moments into the dynamics. Understanding how this mechanism occurs is the focus of the next section.

\section{Nonlinear Analysis and Pattern Saturation}\label{sec:nonlinear-saturation}

In this section, we develop a weakly nonlinear theory describing pattern formation near a steady (Turing) instability. Our goal is to identify the structural quantities that determine pattern saturation beyond the instability threshold. We show that while instability onset is fully determined by the first tail moment (exposure) tensor, the quadratic and cubic normal-form coefficients governing nonlinear saturation depend on higher-order tail moment (packing) tensors  that are completely invisible to linear theory. This reveals a deep hierarchy of structural information and establishes a clear separation between linear and nonlinear structural mechanisms (see Figure~\ref{fig:structural-visibility}). More specifically, we establish that exposure statistics suffice to determine the onset of instability, but packing statistics are required to determine post-onset pattern saturation.

\subsection{Spectral setup}\label{sub:spectral-setup}

Throughout this section, we assume  \eqref{subsubsec:codimention-1-steady-bifurcation}, namely a codimension-one steady instability at a control parameter $p$ (e.g. inhibitor diffusion coefficient), and set $p = p_c$ at criticality. It will be more convenient in subsequent analysis to use the detuning parameter $\epsilon = p - p_c$,  which measures the distance from onset.
Let us denote by $\dot{\mathbf{x}} = F(\mathbf{x};p_c + \epsilon)$ the dynamical system given by \eqref{eq:hypergraph-dynamics} under nonlinear mean coupling \eqref{eq:deepSet-coupling-function}, and let $\delta\mathbf{x} = \mathbf{x} - \bar{\mathbf{x}}$ denote the perturbation from the homogeneous equilibrium $\bar{\mathbf{x}} \in \mathcal{S}$. The third-order Taylor expansion  \eqref{subsubsec:assumption-regularity} of the vector field at $\bar{\mathbf{x}}$ reads
\begin{align}\label{eq:Taylor-expansion-of-full-vector-field}
    F(\bar{\mathbf{x}} + \delta\mathbf{x} ;p_c +\epsilon) &= J(\epsilon)\delta\mathbf{x} + B(\delta\mathbf{x},\delta\mathbf{x}) + C(\delta\mathbf{x},\delta\mathbf{x},\delta\mathbf{x}) + O(\| \delta\mathbf{x}\|^4)
\end{align}
where,  $J(\epsilon) = D_\mathbf{x}F(\bar{\mathbf{x}};p_c + \epsilon)$ is the Jacobian, and  
\begin{align}\label{eq:bilinear-n-trilinear-maps}
     B =  \frac{1}{2}D^2_\mathbf{x}F(\bar{\mathbf{x}};p_c), \quad\quad ~C = \frac{1}{6}D^3_\mathbf{x}F(\bar{\mathbf{x}};p_c)
\end{align}
are the scaled symmetric bilinear and trilinear maps corresponding to the second and third derivatives of the vector field $F$ at onset.

Since $F$ depends smoothly  on $\epsilon$, the Jacobian admits the following expansion
\begin{align}
    J(\epsilon) = J_0 + \epsilon J_1 + O(\epsilon^2)
\end{align}
where,  $J_0 = I_N \otimes J_f - L_{\mathrm{tot}} \otimes D_{\mathrm{eff}}$ is the Jacobian in the linearization \eqref{eq:linearization-full} at onset ($\epsilon = 0$), and
\begin{align}\label{eq:J1}
    J_1 &= \frac{\partial J(\epsilon)}{\partial\epsilon} \Big\rvert_{\epsilon = 0}
\end{align}

\begin{remark}
    Analogous expansions also hold for the maps $B$ and $C$. However, their $\epsilon$-dependence contributes terms of order $O(\epsilon\| \delta\mathbf{x}\|^2)$ and higher which  are absorbed into the remainder in the weakly nonlinear analysis considered below.
\end{remark}
The codimension-one assumption implies that $J_0$ has a simple eigenvalue at $0$ while all the remaining eigenvalues have strictly negative real part.

Using Lemma~\ref{lem:spectrum-of-full-Jacobian}, there exists a critical eigenvalue $\lambda_c$ of $L_{\mathrm{tot}}$ with right and left normalized eigenvectors $r,l \in \mathbb{R}^N$  satisfying
\begin{align}\label{eq:right-n-left-eigenvectors-of-Ltot}
    L_{\mathrm{tot}}\,r = \lambda_c\,r,  \quad\quad \l^{\top}L_{\mathrm{tot}} = \lambda_c\,l^{\top}, \quad\quad l^{\top}r = 1
\end{align}

Let $\xi,\eta \in \mathbb{R}^d$  denote  the normalized right and left  eigenvectors associated with the zero eigenvalue of the linear operator $J_f - \lambda_cD_{\mathrm{eff}}$,  so that
\begin{align}\label{eq:right-n-left-eigenvectors-of-Species-matrix}
    \left(J_f - \lambda_cD_{\mathrm{eff}} \right)\,\xi = 0,  \qquad \eta^{\top}\left(J_f - \lambda_cD_{\mathrm{eff}} \right) = 0, \quad\quad \eta^{\top}\xi = 1
\end{align}

Using the Kronecker product  structure of $J_0$, its  critical  right and left eigenvectors associated with the zero eigenvalue can be written in the following form
\begin{align}\label{eq:critial-right-n-left-eigenvectors-of-full-Jacobian}
    v = r\otimes \xi, \quad\quad w = l \otimes \eta
\end{align}
These critical  eigenvectors are normalized since $w^{\top}v = (l^{\top}r)(\eta^{\top}\xi)  = 1$.


\subsection{Weakly nonlinear reduction}

Standard weakly nonlinear theory \cite{carr2012applications,guckenheimer2013nonlinear,kuznetsov1998elements,wiggins2003introduction} implies that near the codimension-one steady instability described above, the dynamics reduces to a one-dimensional center manifold tangent to the right eigenvector $v$. The evolution on this manifold is governed by a scalar amplitude equation describing the growth and nonlinear saturation of the critical mode. The following proposition states this reduction,  and it provides generic expressions for the resulting coefficients, A detailed derivation is given in Appendix~\ref{app:der-prop-center-manifold-reduction}.

\begin{proposition}[Center manifold reduction and amplitude equation]\label{prop:center-manifold-reduction-n-amplitude-eqn}
    Under the above assumption \eqref{subsubsec:codimention-1-steady-bifurcation}, there exists a one-dimensional, locally invariant center manifold tangent to $\operatorname{span}\{v\}$ at $\delta\mathbf{x} = 0$, on which the system dynamics can be parametrized by a scalar amplitude $A(t)$ whose dynamics satisfies the following amplitude equation 
    \begin{align}\label{eq:amplitude-equation}
        \dot{A} = \epsilon\alpha A + \beta A^2 + \gamma A^3 + O(A^4)
    \end{align}
    where,
    \begin{align}\label{eq:amplitude-coefficients}
        \alpha = w^{\top}J_1v, \qquad \beta = w^{\top}B(v,v), \qquad \gamma = w^{\top}\Big(2B(v,u) + C(v,v,v) \Big)
    \end{align}
    and $u$ is the unique solution of the equation $J_0u = -QB(v,v)$, where $Q = I - vw^{\top}$ is the complementary projection onto the stable subspace.
\end{proposition}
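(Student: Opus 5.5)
We will follow the standard center manifold route. First we apply the center manifold theorem to the parameter-extended system $\dot{\delta\mathbf{x}} = F(\bar{\mathbf{x}}+\delta\mathbf{x};p_c+\epsilon)$, $\dot\epsilon = 0$. By the codimension-one assumption \eqref{eq:steady-instability-onset-assumption}, at $(\delta\mathbf{x},\epsilon)=(0,0)$ the linear part has a two-dimensional center subspace spanned by $v$ and the $\epsilon$-direction, and all other eigenvalues have strictly negative real part. The regularity assumption \eqref{subsubsec:assumption-regularity} gives $C^3$ smoothness. We therefore obtain a locally invariant $C^3$ manifold, which we write as a graph
\[
\delta\mathbf{x} = A v + h(A,\epsilon), \qquad w^\top h = 0,
\]
where $h = O(A^2 + |A\epsilon| + \epsilon^2)$. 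The term $\epsilon^2$ in fact vanishes, since $\bar{\mathbf{x}}$ stays an equilibrium for all $\epsilon$: diffusive coupling vanishes on $\mathcal S$, so $F(\bar{\mathbf{x}};p)=0$ provided the homogeneous state does not move with $p$. If the state did move, we would first recentre it.

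Next we derive the amplitude equation. We define $A = w^\top \delta\mathbf{x}$ and apply $w^\top$ to the Taylor expansion \eqref{eq:Taylor-expansion-of-full-vector-field}. Because $w^\top J_0 = 0$, this gives
\[
\dot A = \epsilon\, w^\top J_1 (Av + h) + w^\top B(Av+h,Av+h) + w^\top C(Av,Av,Av) + \text{h.o.t.}
\]
At order $\epsilon A$ this yields $\alpha = w^\top J_1 v$. At order $A^2$ it yields $\beta = w^\top B(v,v)$. At order $A^3$ we need the $A^2$ part of $h$, written $h = A^2 u + \dots$. Bilinearity and symmetry of $B$ then contribute $2A^3\, w^\top B(v,u)$, and $C$ contributes $A^3 w^\top C(v,v,v)$. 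Mixed terms of order $\epsilon A^2$ and higher, together with the $\epsilon$-dependence of $B$ and $C$, are absorbed into the remainder, as in the preceding remark.

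To determine $u$, we use the invariance equation. Differentiating the graph gives $\dot{\delta\mathbf{x}} = \dot A\,(v + \partial_A h)$. We substitute and project with $Q = I - v w^\top$. At order $A^2$ with $\epsilon=0$, the left-hand side is $Q(\beta A^2 v + \dots)$, which vanishes at this order because $Qv=0$. The right-hand side is $J_0 (A^2 u) + A^2 Q B(v,v)$, using $QJ_0 = J_0 Q = J_0$ on the range. This gives $J_0 u = -QB(v,v)$ together with $w^\top u = 0$.

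The key step is solvability and uniqueness. Since $w^\top Q = 0$, the right-hand side lies in $\operatorname{range}(J_0) = (\ker J_0^\top)^\perp$. This uses the simplicity of the zero eigenvalue and $w^\top v = 1$. So a solution exists, and it is unique modulo $\ker J_0 = \operatorname{span}\{v\}$. The normalization $w^\top u = 0$ fixes it. In other words, $J_0$ restricted to $\operatorname{range}(Q)$ is invertible because its spectrum there is the stable spectrum.

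We expect the main obstacle to be bookkeeping rather than ideas. Two points need care. The first is keeping the $\epsilon$-dependence of $h$ consistent with the claimed truncation. The second is that when $\beta \neq 0$ the natural scaling is $A\sim\epsilon$, so $\gamma A^3$ is formally subleading. We will state the expansion purely as a Taylor expansion in $(A,\epsilon)$ up to the displayed orders, which is what Proposition~\ref{prop:center-manifold-reduction-n-amplitude-eqn} asserts, rather than as a scaling-consistent normal form.
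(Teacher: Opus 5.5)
Your proposal is correct and follows essentially the same route as the paper's appendix proof. Both graph the center manifold as $Av+h$ with $w^\top h=0$, project with $w^\top$ to read off $\alpha,\beta,\gamma$, and project the invariance equation with $Q$ at order $A^2$ to obtain $J_0u=-QB(v,v)$, uniquely solvable on $\operatorname{Range}(Q)$. Your version is somewhat more careful than the paper's without changing the approach: the suspension $\dot\epsilon=0$ justifies the $\epsilon$-dependence and gives $h(0,\epsilon)=0$, you keep the $O(\epsilon A)$ part of $h$ that the paper's expansion $h=A^2u+O(|A|^3+|\epsilon|A^2)$ drops (harmless, since it only feeds the $\epsilon A^2$ and $\epsilon^2A$ remainder), and you add an explicit Fredholm solvability argument.
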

\begin{proof}
    The result follows from standard center manifold reduction for a codimension-one steady bifurcation. A detailed derivation is provided in Appendix~\ref{app:der-prop-center-manifold-reduction}
\end{proof}

\subsection{Linear growth rate coefficient}

We now specialize the linear growth rate coefficient $\alpha$ appearing in the amplitude equation \eqref{eq:amplitude-equation} to our hypergraph setting. By leveraging the Kronecker product  structure of the linearized operator, we derive an explicit expression for $\alpha$ in terms of the critical eigenvalue $\lambda_c$ of $L_\mathrm{tot}$ and the "species" eigenvectors $p$ and $q$. The resulting formula reveals that the linear growth rate depends solely on the first-tail moment (exposure) information of the hypergraph, and it does not involve higher-order tails moments (packing) information. This is consistent with the linear analysis findings from the preceding section.

Let's recall from \eqref{eq:amplitude-coefficients} that $\alpha = w^\top J_1 v$, where $J_1$ is given by \eqref{eq:J1}. Equivalently, using the relation $p = p_c + \epsilon$ and the fact that the control parameter $p$ enters the Jacobian only through $D_\mathrm{eff}$, the chain rule yields 
\begin{align}
    J_1 &= -L_\mathrm{tot}\otimes\frac{\partial D_\mathrm{eff}}{\partial p }\Big\rvert_{p=p_c}
\end{align}

Altogether, using \eqref{eq:right-n-left-eigenvectors-of-Ltot} and \eqref{eq:critial-right-n-left-eigenvectors-of-full-Jacobian} we get
\begin{align}
    \alpha &= -\left (l^\top \otimes \eta^\top\right ) L_\mathrm{tot}\otimes\frac{\partial D_\mathrm{eff}}{\partial p } \left (r\otimes \xi\right ) \notag\\
    &= -\lambda_c\, \eta^\top \left (\frac{\partial D_\mathrm{eff}}{\partial p } \right ) \xi \label{eq:alpha-with-Deff}
\end{align}
where the derivative is evaluated at the critical parameter value $p=p_c$.

For clarity of presentation, we specialize expression \eqref{eq:alpha-with-Deff} to the classical two-species setting $d=2$. In this case, the state variable is $x=(u,v)$, where $u$ and $v$ represent the activator and inhibitor species respectively. The resulting formula for the growth-rate coefficient $\alpha$ is stated in the following proposition.

\begin{proposition}[Linear growth rate coefficient in the two-species case]
Consider the hypergraph reaction–diffusion system \eqref{eq:hypergraph-dynamics} with nonlinear mean coupling \eqref{eq:deepSet-coupling-function}. Assume $d=2$, corresponding to the classical two-species setting $x=(u,v)$, where $u$ and $v$ denote the activator and inhibitor species  respectively. Let the effective diffusion matrix be $D_\mathrm{eff} = M D\sigma(\bar{x})$, 
as defined in \eqref{eq:linearization-full}, where $M=\operatorname{diag}(d_u,d_v)$ and $d_u,d_v$ are the activator and inhibitor diffusion coefficients. Using the notation introduced in Section~\ref{sub:spectral-setup}, the linear growth-rate coefficient in the amplitude equation \eqref{eq:amplitude-equation} is given by
\begin{align}
    \alpha = -\lambda_c \, \sigma_v'(\bar{v}) \, \eta_v\,\xi_v
\end{align}

where $\lambda_c$ is the critical eigenvalue of the operator $L_\mathrm{tot}$, and \; $\xi_v$, $\eta_v$ denote the $v$-components of the corresponding right and left species eigenvectors.
\end{proposition}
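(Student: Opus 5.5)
We specialize formula \eqref{eq:alpha-with-Deff}, $\alpha = -\lambda_c\,\eta^\top(\partial_p D_{\mathrm{eff}})\xi$, which was already derived from $\alpha = w^\top J_1 v$ using the Kronecker structure of $J_1$ and the eigenvector relations \eqref{eq:right-n-left-eigenvectors-of-Ltot}--\eqref{eq:critial-right-n-left-eigenvectors-of-full-Jacobian}. The remaining work is to compute $\partial_p D_{\mathrm{eff}}$ at $p=p_c$ for the two-species setting. We take the control parameter to be the inhibitor diffusion coefficient, $p = d_v$, as suggested in Section~\ref{sub:spectral-setup}.

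First, we write $D_{\mathrm{eff}} = M\,D\sigma(\bar x)$ explicitly. Since $\sigma$ acts elementwise, $D\sigma(\bar x) = \operatorname{diag}(\sigma_u'(\bar u),\sigma_v'(\bar v))$. Hence $D_{\mathrm{eff}} = \operatorname{diag}(d_u\sigma_u'(\bar u),\, d_v\sigma_v'(\bar v))$. Here $\sigma_u,\sigma_v$ denote the components of the activation acting on the respective species, which may coincide.

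The key point is that the homogeneous equilibrium $\bar x=(\bar u,\bar v)$ does not depend on $p$. The diffusive coupling vanishes on $\mathcal S$, so $\bar x$ is determined solely by $f(\bar x)=0$. Therefore $\sigma'$ evaluated at $\bar x$ is independent of $d_v$, and the only $p$-dependence in $D_{\mathrm{eff}}$ comes from the explicit factor $d_v$. This gives
\[
\frac{\partial D_{\mathrm{eff}}}{\partial p}\Big|_{p=p_c} = \operatorname{diag}\big(0,\ \sigma_v'(\bar v)\big).
\]
This is also the step where the argument needs care: one must confirm that $\bar x$, and hence $D\sigma(\bar x)$ and $J_f$, carry no hidden dependence on the diffusion coefficient. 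That is exactly why the chain rule earlier only involved $D_{\mathrm{eff}}$.

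Substituting into \eqref{eq:alpha-with-Deff} gives
\[
\eta^\top \operatorname{diag}(0,\sigma_v'(\bar v))\,\xi = \sigma_v'(\bar v)\,\eta_v\xi_v,
\]
and therefore $\alpha = -\lambda_c\,\sigma_v'(\bar v)\,\eta_v\,\xi_v$.

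Finally, $\lambda_c$ may be complex in the directed setting. For a codimension-one steady bifurcation, however, the critical eigenvalue is real with real eigenvectors $r,l,\xi,\eta$, as assumed in \eqref{eq:right-n-left-eigenvectors-of-Ltot}. So $\alpha$ is real. Since the formula involves only $\lambda_c$, an eigenvalue of $L_{\mathrm{tot}}$, it depends on the hypergraph only through exposure information.
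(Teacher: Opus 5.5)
Your proposal is correct and follows the paper's argument. Like the paper, you specialize \eqref{eq:alpha-with-Deff} with $p=d_v$ and use that $\sigma$ acts componentwise, so $\partial_{d_v} D_{\mathrm{eff}}=\operatorname{diag}(0,\sigma_v'(\bar v))$; your additional check that $\bar x$ (and hence $J_f$ and $D\sigma(\bar x)$) does not depend on $d_v$, because the diffusive coupling vanishes on $\mathcal S$, is a worthwhile detail that the paper leaves implicit.
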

\begin{proof}
    It follows from  \eqref{eq:alpha-with-Deff} by taking  $p=d_v$ as the control parameter, and using the fact that the activation $\sigma$ acts componentwise.
\end{proof}

The expression for $\alpha$ shows that the onset growth rate depends on the hypergraph structure only through the critical eigenvalue $\lambda_c$ of the  operator $L_\mathrm{tot}$ (i.e.  \emph{exposure Laplacian}). The remaining factors are determined by the local kinetics and the activation function $\sigma$. In particular, no higher-order tail moment (\emph{packing}) information appears. Therefore,  exposure-equivalent hypergraphs exhibit identical linear growth rates. This mirrors the \emph{Linear distinguishability} result~\ref{cor:linear-indistinguishability}  established in the linear analysis section.

\subsection{Nonlinear saturation coefficients}

To understand how the hypergraph structure influences pattern saturation, we leverage the natural decomposition of the vector field $F$ into its reaction and diffusion components. Under nonlinear mean coupling \eqref{eq:deepSet-coupling-function}, the diffusion term can be  further separated into contributions from the head node and contributions from multiway tail interactions. This separation induces a corresponding split of the quadratic and cubic multilinear maps $B$ and $C$ \eqref{eq:bilinear-n-trilinear-maps}, and hence of the amplitude equation coefficients $\beta$ and $\gamma$ given in  \eqref{eq:amplitude-coefficients}. 

Using the equivalent hyperedge formulation of the dynamics stated in Proposition~\ref{prop:hyperedge-formulation}, the multilinear operators decompose as
\begin{align}\label{eq:multilinear-maps-decomposition}
    B = B^\mathrm{react} + B^\mathrm{head} + B^\mathrm{tail}, \qquad C = C^\mathrm{react} + C^\mathrm{head} + C^\mathrm{tail}
\end{align}
where, for arbitrary perturbation $\delta\mathbf{x}, \delta\mathbf{y}$ and $\delta\mathbf{z}$ of the homegeneous equilibrium $\bar{\mathbf{x}} \in \mathcal{S}$, and $i \in \{1,\cdots,  N\}$, we have
\begin{align}
B^\mathrm{react}_i(\delta\mathbf{x},\delta\mathbf{y}) &= \frac{1}{2}D^2f(\bar{x})(\delta x_i,\delta y_i) \label{eq:B-react} \\
B^\mathrm{head}_i(\delta\mathbf{x},\delta\mathbf{y}) &= -\frac{1}{2} \left( \sum_{k=1}^{\rho-1}{ d_\mathrm{in}^{(k)}(v_i)} \right) MD^2\sigma(\bar{x})(\delta x_i, \delta y_i) \label{eq:B-head}
\end{align}
and the following tail piece from which the second-order tail moment (\emph{pair packing}) tensor emerges
\begin{align}\label{eq:B-i-tail}
B^\mathrm{tail}_i(\delta\mathbf{x},\delta\mathbf{y}) &= \frac{1}{2}\sum_{k=1}^{\rho-1}\sum_{e=(T\rightarrow i)\in E_k}\sum_{j,l \in T} { \frac{w(e)}{k^2} M D^2 \sigma(\bar{x})(\delta x_j,\delta y_l)}
\end{align}
Similarly, one can derive the expressions for $C^\mathrm{react}$ and $C^\mathrm{head}$, with the latter involving only the zeroth tail moment tensor $d_\mathrm{in}^{(k)}$ as in \eqref{eq:B-head}. For the sake of completeness, we give below the expression for $C^\mathrm{tail}$ since as in \eqref{eq:B-i-tail}, it reveals the origin of the third tail moment (\emph{triple packing}) tensor. 
\begin{align}\label{eq:C-i-tail}
C^\mathrm{tail}_i(\delta\mathbf{x},\delta\mathbf{y},\delta\mathbf{z}) &= \frac{1}{6}\sum_{k=1}^{\rho-1}\sum_{e=(T\rightarrow i)\in E_k}\sum_{j,l,t \in T} { \frac{w(e)}{k^3} M D^3 \sigma(\bar{x})(\delta x_j,\delta y_l,\delta z_t)}
\end{align}

\subsubsection{Pair and triple packing tensors}
The expressions for $B^\mathrm{tail}$ and $C^\mathrm{tail}$ above show how the higher-order tail moments arise naturally from the dynamics. Using Definition~\ref{def:pth-tail-moment}, we now introduce the \emph{pair} and \emph{triple} packing tensors, which correspond to the second and third tail-moment tensors, respectively. 

\begin{align}\label{eq:pair-n-triple-packing-tensors}
\mathsf{P}^{(k)}_{i j_1 j_2}
\coloneqq
\sum_{\substack{e=(T\rightarrow i)\in E_k, \\ j_1,j_2\in T}}
\frac{w(e)}{k^2},
\qquad \qquad
\mathsf{Q}^{(k)}_{i j_1 j_2 j_3}
\coloneqq
\sum_{\substack{e=(T\rightarrow i)\in E_k, \\ j_1,j_2,j_3\in T}}
\frac{w(e)}{k^3}
\end{align}

A few observations are in order here. First, the tensor $\mathsf{P}^{(k)}$ measures pairwise co-occurrence of nodes within B-hyperedges of tail size $k$ pointing to a given head node  in $\mathcal{H}_k$, while $\mathsf{Q}^{(k)}$ measures triple co-occurrence. Next, these quantities capture hyperedge "packing" patterns, and they contain structural information that \emph{cannot} be inferred from the first tail moment (exposure) tensor alone when $k \ge 2$. Finally, Proposition~\ref{prop:pth-tail-moment-as-tensor-contraction} shows that these \emph{packing tensors} arise as  \emph{contractions} of the \emph{adjacency tensor}. More specifically, we have
\begin{align}\label{eq:packing-tensors-as-contraction}
    \mathsf{P}^{(k)}_{ij_1j_2} = \mathsf{A}^{(k,2)}, \qquad \qquad \mathsf{Q}^{(k)}_{ij_1j_2j_3} = \mathsf{A}^{(k,3)}.
\end{align}


The structure  of the packing tensors is determined by $k$. At $k=1$, tails are singletons, so all nonzero entries occur when indices are equal. Both tensors reduce to diagonal tensors with entries given by the exposure matrix $A^{(1)}$~(see \eqref{eq:chol-matrix-decomposition}) and therefore carry no nontrivial co-occurrence information. More specifically, we have
\begin{align}
    \mathsf{P}_{i j_1 j_2}^{(1)} = \delta_{j_1 j_2}\,A_{i j_1}^{(1)}, \qquad \qquad
\mathsf{Q}_{i j_1 j_2 j_3}^{(1)} = \delta_{j_1 j_2} \delta_{j_2 j_3}\,A_{i j_1}^{(1)}.
\end{align}
For $k \ge 2$, $\mathsf{P}^{(k)}$ has support on distinct index pairs and is genuinely pair-packing-driven. Meanwhile, The tensor $\mathsf{Q}^{(2)}$ has no support on triples of distinct indices and reduces to pairwise co-occurrence structure via repeated indices. Genuine triple packing appears only for $k \ge 3$, where $\mathsf{Q}^{(k)}$ admits nonzero entries on triples of distinct indices and is not reducible to lower-order statistics.

\paragraph{Summary of Packing Types}

\begin{align}
    \mathsf{P}^{(k)} =
\begin{cases}
\text{exposure}, & k=1, \\
\text{pair-packing}, & k \ge 2,
\end{cases}
\qquad \qquad
\mathsf{Q}^{(k)} =
\begin{cases}
\text{exposure}, & k=1, \\
\text{pair-packing}, & k=2, \\
\text{triple-packing}, & k \ge 3.
\end{cases}
\end{align}
In particular, \emph{genuine }$r$-way packing requires tails of size at least $r$, which is  reflected in the tensor contractions \eqref{eq:packing-tensors-as-contraction} and more generally in \eqref{eq:mth-order-tail-marginal-of-adjacency-tensor}.

\subsubsection{Decomposition of the saturation coefficients}
It follows from \eqref{eq:amplitude-coefficients} that the nonlinear coefficients $\beta$ and $\gamma$ are obtained by projecting the quadratic and cubic nonlinear forcing terms onto the critical left eigenvector $w$. As a result, the decomposition of the multilinear maps in \eqref{eq:multilinear-maps-decomposition} induces a corresponding decomposition of the nonlinear coefficients as follows.
\begin{align}\label{eq:decomposition-of-beta-and-gamma}
    \beta = \beta_\mathrm{react} + \beta_\mathrm{head} + \beta_\mathrm{tail}, \qquad \qquad
    \gamma = \gamma^B + \gamma^C_\mathrm{react} + \gamma^C_\mathrm{head} + \gamma^C_\mathrm{tail}
\end{align}
where, $ \beta_\mathrm{react}, \beta_\mathrm{head}$ and  $\gamma^C_\mathrm{react}, \gamma^C_\mathrm{head}$ are obtained by projecting the appropriate component of the  quadratic and cubic maps respectively onto the critical left eigenvector. The component  $\gamma^B$ is given by
\begin{align}\label{eq:gamma_B-component}
    \gamma^B = 2w^\top B(v,u)
\end{align}
The tail contribution to the quadratic saturation coefficient  $\beta$, and the tail contribution of the trilinear map $C$ to the cubic saturation coefficient $\gamma$ are given by 
\begin{align}
    \beta_\mathrm{tail} = w^\top B^\mathrm{tail}(v,v), \qquad \text{and } \qquad \gamma^C_\mathrm{tail} = w^\top C^\mathrm{tail}(v,v,v).
\end{align}

\subsubsection{How packing tensors enter the saturation coefficients}

We now explicitly show how  packing tensors contribute to the nonlinear saturation coefficients. We first consider the quadratic forcing  term. Substituting the critical right eigenvector $v =r\otimes \xi$ into \eqref{eq:B-i-tail}, we get 
\begin{align}
    B^\mathrm{tail}_i(v,v) &= \frac{1}{2}\left( \sum_{k=1}^{\rho-1}\Pi^{(k)}_i\right ) MD^2\sigma(\bar{x})(\xi,\xi) \notag\\
    &= \frac{1}{2}\left(\Pi_i^{(1)}+ \Pi^\mathsf{P}_i \right) MD^2\sigma(\bar{x})(\xi,\xi)
\end{align}
where the order-k pair-packing scalars obtained by contraction with the  right eigenvector $r$ of $L_\mathrm{tot}$, and their tail sum, are defined by
\begin{align}\label{eq:contracted-pair-packing-scalar}
\Pi^{(k)}_i =\sum_{j_1,j_2=1}^{N}\mathsf{P}^{(k)}_{ij_1j_2}r_{j_1}r_{j_2}, \qquad \text{and }\qquad \Pi^\mathsf{P}_i = \sum_{k=2}^{\rho-1}\Pi^{(k)}_i
\end{align}
Collecting these scalars into $N$-dimensional vectors $\Pi^{(k)}$ and $\Pi^\mathrm{tot}$, we get 
\begin{align}\label{eq:B-tail-kronecker-form}
    B^\mathrm{tail}(v,v) = \frac{1}{2}\left(\Pi^{(1)}+ \Pi^\mathsf{P} \right)\otimes MD^2\sigma(\bar{x})(\xi,\xi)  
\end{align}
where the order-k contracted pair-packing vector, and their tail sum, are defined by
\begin{align}\label{eq:contracted-pair-packing-vectors}
    \Pi^{(k)} \coloneqq \mathsf{P}^{(k)}\times_2r\times_3r, \qquad \text{ and }\qquad \Pi^\mathsf{P} \coloneqq \sum_{k=2}^{\rho-1}\Pi^{(k)}
\end{align}

Projecting \eqref{eq:B-tail-kronecker-form} onto the critical left eigenvector $w$, we get the following decomposition of the tail piece of the quadratic saturation coefficient into its \emph{diagonal} component associated with $\Pi^{(1)}$ and its \emph{pair-packing} component which is associated with $\Pi^\mathsf{P}$
\begin{align}\label{eq:tail-contribution-in-beta}
    \beta_\mathrm{tail} = \beta_\mathrm{diag} + \beta_\mathrm{P}
\end{align}

Altogether, the quadratic saturation coefficient decomposes as follows
\begin{align}\label{eq:decomposition-of-beta}
    \beta =\left( \beta_\mathrm{react} + \beta_\mathrm{head} + \beta_\mathrm{diag} \right) + \beta_\mathsf{P}
\end{align}

A similar argument shows that the cubic tail forcing term can be written in the following compact form
\begin{align}\label{}
    C^\mathrm{tail}(v,v,v) = \frac{1}{6} \left(\Gamma^{(1)}+ \Gamma^{(2)}+\Gamma^\mathsf{Q} \right)\otimes MD^3\sigma(\bar{x})(\xi,\xi,\xi)  
\end{align}
where the order-k contracted triple-packing vector, and  their tail sum, are defined by
\begin{align}\label{eq:contracted-triple-packing-vectors}
    \Gamma^{(k)} \coloneqq \mathsf{T}^{(k)}\times_2 r\times_3 r\times_4 r, \qquad \text{ and }\qquad \Gamma^\mathsf{Q} \coloneqq \sum_{k=3}^{\rho-1}\Gamma^{(k)}
\end{align}

The tail contribution of the trilinear map to the cubic saturation coefficient decomposes into its \emph{diagonal} component driven by $\Gamma^{(1)}$, its \emph{pair-packing} component driven by $\Gamma^{(2)}$ and its \emph{triple-packing} component driven by $\Gamma^{\mathsf{Q}}$
\begin{align}
    \gamma^C_\mathrm{tail} = \gamma^C_\mathrm{diag}+ \gamma^C_\mathsf{P} + \gamma_\mathsf{Q} 
\end{align}
Hence, we get the complete decomposition of the cubic saturation coefficient  as follows
\begin{align}\label{eq:decomposition-of-gamma}
    \gamma = \gamma^C_\mathrm{react} + \gamma^C_\mathrm{head} + \gamma^C_\mathrm{diag} + \gamma^B + \gamma^C_\mathsf{P} + \gamma_\mathsf{Q}
\end{align}

The following proposition summarizes the derivation above, and  it   gives explicit expressions for the nonlinear saturation coefficients in our hypergraph setting


\begin{proposition}[Nonlinear saturation coefficients]\label{prop:nonlinear-saturation-coefficients}
Consider the  reaction–diffusion system on hypergraph \eqref{eq:hypergraph-dynamics} with nonlinear mean coupling \eqref{eq:deepSet-coupling-function}.
    Let $v = r\otimes \xi$ and $w = l \otimes \eta$ denote the critical right and left eigenvectors at onset as defined in Section~\ref{sub:spectral-setup}. The nonlinear saturation coefficients in the amplitude equation \eqref{eq:amplitude-equation} are given by
\begin{align}\label{eq:decomposition-of-nonlinear-coefficients-inside-the-proposition}
    \begin{split}
        \beta &= \beta_\mathrm{react} + \beta_\mathrm{head} + \beta_\mathrm{diag} + \beta_\mathsf{P} \\
    \gamma &= \gamma^C_\mathrm{react} + \gamma^C_\mathrm{head} + \gamma^C_\mathrm{diag} + \gamma^B + \gamma^C_\mathsf{P} + \gamma_\mathsf{Q}
    \end{split}
\end{align}
where, the reaction and head components are defined as follows
\begin{align}
    \beta_\mathrm{react} = \frac{1}{2}\left(l^\top r^{\odot 2}\right) \left( \eta^\top f_2 \right), \qquad 
    \gamma^C_\mathrm{react} = \frac{1}{6}\left(l^\top r^{\odot 3}\right) \left( \eta^\top f_3 \right)
\end{align}
\begin{align}\label{eq:head-terms-beta-and-gamma}
    \begin{split}
        \beta_\mathrm{head} &= -\frac{1}{2}\left(l^\top\left( d_\mathrm{in}^\mathrm{tot} \odot r^{\odot 2} \right)\right) \left( \eta^\top s_2 \right) \\ 
    \gamma^C_\mathrm{head} &= -\frac{1}{6}\left(l^\top\left( d_\mathrm{in}^\mathrm{tot} \odot r^{\odot 3} \right)\right) \left( \eta^\top s_3 \right).
    \end{split}
\end{align}
The diagonal components of the saturation coefficients are given by
\begin{align}\label{eq:diagonal-components-to-beta-and-gammma}
    \beta_\mathrm{diag} = \frac{1}{2}\left(l^\top \Pi^{(1)}\right) \left( \eta^\top s_2 \right), \qquad 
    \gamma^C_\mathrm{diag} = \frac{1}{6}\left(l^\top \Pi^{(1)}\right) \left( \eta^\top s_3 \right).
\end{align}
The direct pair-packing  components to both coefficients are given by
    \begin{align}\label{eq:direct-pair-packing-componebts-to-beta-and-gamma}
        \beta_\mathsf{P} = \frac{1}{2}\left(l^\top\Pi^\mathsf{P} \right) \left(\eta^\top s_2 \right), \qquad
        \gamma^C_\mathsf{P} = \frac{1}{6}\left(l^\top\Gamma^{(2)} \right) \left(\eta^\top s_3 \right).
    \end{align}
The direct triple-packing component to the cubic saturation coefficient reads 
\begin{align}\label{eq:direct-triple-packing-componebts-to-gamma}
        \gamma_\mathsf{Q} = \frac{1}{6}\left(l^\top\Gamma^\mathsf{Q} \right) \left(\eta^\top s_3 \right).
    \end{align}

For convenience, we denote the  quadratic and cubic reaction responses along the critical mode by
    \[
        f_2 \coloneqq D^2f(\bar{x})(\xi,\xi), \qquad \text{ and } \qquad f_3 \coloneqq D^3f(\bar{x})(\xi,\xi,\xi).
    \]
    Similarly, the species-side responses of the activation function along the critical mode  is denoted by
     \[
        s_2 \coloneqq MD^2\sigma(\bar{x})(\xi,\xi), \qquad \text{ and } \qquad s_3 \coloneqq MD^3\sigma(\bar{x})(\xi,\xi,\xi).
    \]
We denote the $m$-th elementwise power of $r$ by $r^{\odot m}$, namely $\left(r^{\odot m} \right)_i = r_i^m$.
The total in-degree vector $d_\mathrm{in}^\mathrm{tot}$ is  defined as follows
\[
    d_\mathrm{in}^{(k)} \coloneqq \Big(d_\mathrm{in}^{(k)}(v_1),\cdots,d_\mathrm{in}^{(k)}(v_N) \Big)^\top, \qquad \text{and} \qquad d_\mathrm{in}^\mathrm{tot} \coloneqq \sum_{k=1}^{\rho-1}d_\mathrm{in}^{(k)}.
\]

Finally, $\Pi^\mathsf{P}$ and $\Gamma^\mathsf{Q}$ are the contracted intrinsic pair and triple-packing vectors defined in \eqref{eq:contracted-pair-packing-vectors} and \eqref{eq:contracted-triple-packing-vectors}.
\end{proposition}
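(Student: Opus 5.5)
The plan is to start from the general formulas of Proposition~\ref{prop:center-manifold-reduction-n-amplitude-eqn}, $\beta = w^\top B(v,v)$ and $\gamma = w^\top(2B(v,u)+C(v,v,v))$. We then exploit two facts. First, $B$ and $C$ are linear in the decomposition \eqref{eq:multilinear-maps-decomposition}. Second, every piece of $B$ and $C$ evaluated on $v=r\otimes\xi$ has a Kronecker form (node vector)$\,\otimes\,$(species vector). Once each piece is written this way, the projection onto $w=l\otimes\eta$ factorizes as $(l^\top\cdot)(\eta^\top\cdot)$. The $\gamma^B$ term $2w^\top B(v,u)$ is kept unexpanded, as in \eqref{eq:gamma_B-component}, so the real work is the direct evaluation of the three families of pieces.

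\textbf{Reaction and head pieces.} Insert $\delta x_i=\delta y_i = r_i\xi$ into \eqref{eq:B-react}. Bilinearity gives $B^\mathrm{react}_i(v,v)=\tfrac12 r_i^2 f_2$, so $B^\mathrm{react}(v,v)=\tfrac12 r^{\odot2}\otimes f_2$, and projecting yields $\beta_\mathrm{react}$. The same substitution into \eqref{eq:B-head} gives $-\tfrac12 (d_\mathrm{in}^\mathrm{tot}\odot r^{\odot 2})\otimes s_2$, which yields $\beta_\mathrm{head}$. For the cubic pieces we first write $C^\mathrm{react}$ and $C^\mathrm{head}$ explicitly from the third-order Taylor expansion of $f(x_i)$ and of $-\sum_k d^{(k)}_\mathrm{in}(v_i)M\sigma(x_i)$, the latter using the hyperedge formulation of Proposition~\ref{prop:hyperedge-formulation}. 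Trilinearity then gives the factors $r_i^3$, and hence the stated $\gamma^C_\mathrm{react}$ and $\gamma^C_\mathrm{head}$.

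\textbf{Tail pieces.} Substitute into \eqref{eq:B-i-tail}: $\sum_{j,l\in T}\frac{w(e)}{k^2} r_jr_l\, s_2$. Exchanging the sum over hyperedges with the sum over ordered pairs $(j_1,j_2)$ and applying Definition~\ref{def:pth-tail-moment} turns the coefficient into $\sum_{j_1,j_2}\mathsf P^{(k)}_{ij_1j_2}r_{j_1}r_{j_2}=\Pi^{(k)}_i$. This is exactly \eqref{eq:B-tail-kronecker-form}. Splitting off $k=1$ from $k\ge2$ gives $\beta_\mathrm{diag}+\beta_\mathsf P$. For the cubic tail we do the same with \eqref{eq:C-i-tail} and $\mathsf Q^{(k)}$, obtaining the vectors $\Gamma^{(k)}$, and we split into $k=1$, $k=2$ and $k\ge3$. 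This produces $\gamma^C_\mathrm{diag}$, $\gamma^C_\mathsf P$ and $\gamma_\mathsf Q$. For consistency of notation, we should also note two identities. The $k=1$ cubic term equals $l^\top\Gamma^{(1)}$, where $\Gamma^{(1)}_i=\sum_j A^{(1)}_{ij}r_j^3$ by the diagonal structure of $\mathsf Q^{(1)}$. Likewise $\Pi^{(1)}_i=\sum_j A^{(1)}_{ij}r_j^2$.

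\textbf{Main obstacle.} The delicate step is the bookkeeping that justifies the tail formulas \eqref{eq:B-i-tail} and \eqref{eq:C-i-tail} themselves. This means differentiating $\sigma\big(\tfrac1k\sum_{r\in T}x_r\big)$ twice and three times, checking that the chain rule produces exactly the factors $k^{-2}$ and $k^{-3}$ with sums over ordered, possibly repeated, tail indices, and checking that the symmetrization in $B=\tfrac12D^2F$ matches the ordered-tuple convention used in $\mathsf P^{(k)}$ and $\mathsf Q^{(k)}$. Here we must check that repeated indices (for instance $j_1=j_2$) are counted once per ordered tuple, consistent with \eqref{eq:pair-n-triple-packing-tensors}. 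Once this is settled, everything else is routine, because $M$ and $\sigma$ act componentwise and are order-independent by assumption \ref{subsubsec:assumption-order-independent-coupling}. That is what allows the species factor $s_2$ or $s_3$ to be pulled out uniformly across all $k$.
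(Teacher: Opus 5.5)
Your proposal is correct and follows essentially the paper's own route: split $B$ and $C$ into reaction, head and tail pieces, substitute $v=r\otimes\xi$ to put each piece in Kronecker form, contract the tail sums into $\Pi^{(k)}$ and $\Gamma^{(k)}$, factor the projection onto $w=l\otimes\eta$, and leave $\gamma^B=2w^\top B(v,u)$ unexpanded. Your computation that the $k=1$ cubic tail term is $\tfrac16(l^\top\Gamma^{(1)})(\eta^\top s_3)$ with $\Gamma^{(1)}_i=\sum_j A^{(1)}_{ij}r_j^3$ is right and agrees with the paper's derivation preceding the statement, so you should say explicitly that the $\Pi^{(1)}$ in the stated formula for $\gamma^C_\mathrm{diag}$ is a typo for $\Gamma^{(1)}$, rather than presenting the two identities as a notational aside.
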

\begin{proof}
The proof leverages the  Kronecker product structure of the equations, and it follows from \eqref{eq:decomposition-of-beta-and-gamma} -- \eqref{eq:decomposition-of-gamma}. 
\end{proof}

\begin{remark}
The cubic saturation coefficient $\gamma$ also contains the indirect  contribution $\gamma^B = 2 w^\top B(v,u),$
where $u$ is the unique solution to  the  equation $J_0u = -QB(v,v)$, and the operator  $Q = I-vw^\top$ is the projection onto the stable subspace. The term $\gamma^B$ inherits pair-packing dependence through the "feedback" term $u$, but it does not reduce to a direct contraction formula of the same form as $\beta_\mathsf{P}, \gamma^C_\mathsf{P}$ and $\gamma_\mathsf{Q}$.
\end{remark}

The key implication of Proposition~ \ref{prop:nonlinear-saturation-coefficients} is that the zeroth tail moment (exposure) information contributes to both saturation coefficients through their head and diagonal terms \eqref{eq:head-terms-beta-and-gamma}--\eqref{eq:diagonal-components-to-beta-and-gammma}, whereas higher-order tail moment (packing) information enter through the tail terms. In particular, only pair packing contributes to the quadratic coefficient $\beta$ via its component $\beta_\mathsf{P}$. Meanwhile the cubic coefficient $\gamma$ receives  pair-packing effect through $\gamma^B$ and $\gamma^C_\mathsf{P}$, and triple packing effect through $\gamma_\mathsf{Q}$. Therefore, \emph{exposure} information alone is \emph{insufficient} to determine pattern saturation, \emph{packing} information is \emph{required}.

\subsection{Normal form regimes}

The amplitude equation derived in \eqref{eq:amplitude-equation} includes both quadratic and cubic nonlinear terms, leading to two distinct normal form regimes based on whether the quadratic coefficient $\beta$ vanishes. In the \emph{resonant case} ($\beta \neq 0$), the quadratic term governs the leading nonlinear dynamics, whereas in the \emph{non-resonant case} ($\beta = 0$),  the cubic term determines the leading nonlinear saturation, thus leading to the classic Stuart-Landau equation.

In contrast to classical reaction–diffusion systems, where symmetry often forces quadratic terms to cancel out \cite{newell1969finite,cross1993pattern,hoyle2006pattern,cross2009pattern}, no such cancellation mechanism exists  a priori in hypergraph-coupled settings. This reflects a structural feature of our  framework, namely nonlinear mean aggregation produces intrinsic quadratic contributions that generically persist under projection onto the critical mode. As a result, the condition $\beta = 0$ is non-generic and would require specific parameter tuning to achieve. The simple example provided below illustrates this mechanism explicitly.

\begin{example}[Genericity of the resonant case]\label{exmpl:generecity-of-the-resonant-case}
    We consider three scalar dynamical units $\{v_1,v_2,v_3 \}$ interacting over the hypergraph whose hyperedges are $e_1 = \left(\left\{v_2,v_3 \right\} \rightarrow v_1 \right),\, e_2 = \left(v_1  \rightarrow v_2 \right) $ and $e_3 = \left(v_1  \rightarrow v_3 \right)$. The weight function $\omega$ assigns positive weights to each hyperedge as follows: $\omega(e_1) = \omega_h$ \,  and $\omega(e_2) =  \omega_e=\omega(e_3)$.  The interaction is mediated by a nonlinear mean-aggregation activation function $\sigma$ as in \eqref{eq:deepSet-coupling-function} and   satisfying $\sigma(0) = 0,~\sigma'(0) > 0$ and $\sigma''(0)\ne 0$. We set $M = -d$, where $d>0$ is the control parameter. Each unit evolves according to the same intrinsic dynamics $f(x) = c_1x+c_2x^2$, where $c_1 < -\omega_0d\sigma'(0)$. In fact, any sufficiently smooth $f$ satisfying  $f(0) =0$ with appropriately bounded first derivative will do. The dynamics of the resulting higher-order network system reads
    \begin{align}\label{eq:toy-example-dynamics}
        \begin{split}
            \dot{x}_1 &= f(x_1)-\omega_h d\left( \sigma\left( \frac{x_2+x_3}{2}\right)-\sigma(x_1)  \right)\\
        \dot{x}_2 &= f(x_2) -\omega_ed\left( \sigma(x_1)-\sigma(x_2)\right)\\
        \dot{x}_3 &= f(x_3) - \omega_ed\left( \sigma(x_1)-\sigma(x_3)\right)
        \end{split}
    \end{align}
The homogeneous equilibrium is $\bar{\mathbf{x}} = (0,0,0)^\top$, and the Jacobian matrix  $J = I_3\otimes J_f - L_\mathrm{tot}\otimes D_\mathrm{eff}(d)$ has eigenvalues $\lambda_1 = c_1+(\omega_e+\omega_h)d\sigma'(0)$, \, $\lambda_2 = c_1+ \omega_ed\sigma'(0)$ and $\lambda_3 = c_1$. So, a codimension-$1$ steady bifurcation occurs at
\begin{align}
    d_c = -\frac{c_1}{(\omega_e+\omega_h)\sigma'(0)}
\end{align}
At onset ($d = d_c$), the normalized critical right and left eigenvectors are given by
\begin{align}\label{eq:critical-eigenventor-in-the-example}
    v = (\omega_h,-\omega_e,-\omega_e)^\top, \qquad \text{ and } \qquad w = \frac{1}{\omega_e+\omega_h}\left(1,-\frac{1}{2}, -\frac{1}{2} \right)^\top
\end{align}
A Taylor expansion of the vector field in \eqref{eq:toy-example-dynamics} yields the following quadratic forcing vector
\begin{align}
    B(v,v) =
\begin{pmatrix}
     c_2\omega_h^2+\frac{1}{2}(\omega_h^2-\omega_e^2)\omega_h d_c\sigma''(0) \\ c_2\omega_e^2+\frac{1}{2}(\omega_e^2-\omega_h^2)\omega_e d_c\sigma''(0)\\ c_2\omega_e^2+\frac{1}{2}(\omega_e^2-\omega_h^2)\omega_e d_c\sigma''(0) 
\end{pmatrix}
\end{align}    
The quadratic nonlinear saturation coefficient is obtained by projection onto the critical left eigenvector  $w$, and it reads
\begin{align}\label{eq:toy-example-beta}
    \beta = (\omega_h-\omega_e) \left( c_2 - \frac{ \sigma''(0)}{2\sigma'(0)}c_1 \right)
\end{align}  
Thus, if we assume $\omega_e \ne \omega_h$,  the condition $\beta = 0$ requires   fine-tuning   the system parameters to lie exactly on the hypersurface defined by setting the right-hand side of \eqref{eq:toy-example-beta} to zero, and it is therefore non-generic.
\end{example}

Since the condition $\beta = 0$ is non-generic, the weakly nonlinear dynamics falls into two normal form regimes depending on whether the quadratic saturation coefficient vanishes. We will make use of the time rescaling $T = \epsilon t$ to derive the resulting \emph{slow-time} dynamics in both cases.

\subsubsection{Resonant case $\beta \ne 0$}
The amplitude equation \eqref{eq:amplitude-equation} retains its form $
        \dot{A} = \epsilon\alpha A + \beta A^2 + \gamma A^3 + O(A^4)
$.

Balancing the linear and quadratic terms yields $A = O(\epsilon)$. Thus  introducing the amplitude rescaling
$
    A = \epsilon a
$, 
we arrive at the \emph{slow-time} dynamics
\begin{align}
    a_T = \alpha a + \beta a^2 + \epsilon \gamma a^3 + O(a^4)
\end{align}
 which to leading order, gives the following \emph{transcritical-type} normal form
 \begin{align}\label{eq:transcritical-normal-form}
     a_T = \alpha a + \beta a^2
 \end{align}

In this regime, the quadratic coefficient $\beta$ determines the orientation of the bifurcating branch, while the linear growth rate coefficient $\alpha$ controls its stability. This reflects the exchange-of-stability mechanism characteristic of transcritical bifurcations.

When the nontrivial branch is stable, namely $\alpha\epsilon > 0$, the amplitude saturates at
\begin{align}\label{eq:saturating-amplitude-resonant-case}
    A_\infty(\epsilon) =  - \frac{\alpha}{\beta}\epsilon + O(\epsilon^2)
\end{align}

\subsubsection{Non-resonant case $\beta = 0$}
Here, the amplitude equation \eqref{eq:amplitude-equation} takes the form $
        \dot{A} = \epsilon\alpha A  + \gamma A^3 + O(A^4)
$.

Balancing the linear and cubic terms yields $A = O(\epsilon^\frac{1}{2})$ so that the amplitude rescaling $A = \epsilon^\frac{1}{2} a$ gives the following \emph{slow-time} dynamics
\begin{align}
    a_T = \alpha a + \gamma a^3 + O(a^4)
\end{align}
To leading order, we get the following \emph{pitchfork-type} normal form characteristic of the steady Stuart-Landau equation
\begin{align}\label{eq:transcritical-normal-form}
     a_T = \alpha a + \gamma a^3
 \end{align}

In this regime, the linear growth rate coefficient $\alpha$ determines the side of onset on which the homogeneous state loses stability, while the cubic coefficient $\gamma$ controls the criticality of the bifurcation. More specifically, the bifurcation is supercritical for $\gamma <0$, and it is subcritical for $\gamma >0$.

When the nontrivial branch is stable, which occurs whenever $\gamma < 0$ and $\alpha \epsilon >0 $ hold simultaneously, the amplitude saturates at
\begin{align}\label{eq:saturating-amplitude-non-resonant-case}
    A_\infty(\epsilon) =  \pm \sqrt{-\frac{\alpha}{\gamma}\epsilon }+ O(\epsilon)
\end{align}

\begin{remark}
    In both the resonant and non-resonant regimes, packing terms enter the leading nonlinear saturation coefficient. Although these terms may vanish in special cases, such as when the critical mode does not probe the tail directions as in Example~\ref{expl:vanishing-of-packing-contribution} below, they are generically present. Therefore, outside such degenerate configurations, the post-onset dynamics is intrinsically shaped by packing effects, thus  emphasizing the robustness of the mechanism.
\end{remark}

\subsection{ Nonlinear distinguishability and structural effects}\label{subsec:structural-non-identifiability}

Up to this point, we have  established that packing statistics is the structural information controlling the nonlinear saturation of the dynamics. In this section, we take a step further to highlight a core subtlety of this mechanism. In particular, we show that structural presence alone is not sufficient for packing tensors to be  \emph{dynamically relevant}, even though these tensors naturally arise whenever non-trivial hyperedges are present. Rather, packing must be "\emph{seen by the critical mode}", in the sense that its \emph{contribution} 
to the relevant saturation coefficients is nonzero.

\begin{example}[Vanishing of  quadratic packing effect $\beta_\mathsf{P}$]\label{expl:vanishing-of-packing-contribution}
    Beyond providing a template for the \emph{genericity of the resonant case} in hypergraph setting, Example~\ref{exmpl:generecity-of-the-resonant-case} reveals an important subtlety in the  system dynamics. It follows from \eqref{eq:toy-example-beta} that
    \begin{align}\label{eq:beta-in-the-remark-vanishing-pair-packing-contribution}
            \begin{split}
                \beta &= \beta_\mathrm{react}+ \beta_\mathrm{head}+ \beta_\mathrm{diag} + \beta_\mathsf{P} \\
        &= c_2(\omega_h-\omega_e) -\kappa(\omega_h^3-\omega_e^3) - \kappa\,\omega_e\omega_h^2 + \kappa\,\omega_h\omega_e^2
            \end{split}
    \end{align}
    where, 
    \begin{align}
        \kappa\coloneqq \frac{1}{(\omega_e+\omega_h)^2} \frac{ \sigma''(0)}{2\sigma'(0)}c_1
    \end{align}
    This matches the decomposition  predicted by \eqref{eq:decomposition-of-nonlinear-coefficients-inside-the-proposition}. 
    
    Now,  observe that if we set $\omega_e = 0$, meaning removing all pairwise interactions, then the tail contribution \eqref{eq:tail-contribution-in-beta} disappears from \eqref{eq:beta-in-the-remark-vanishing-pair-packing-contribution}. In particular, the quadratic packing effect vanishes (i.e. $\beta_\mathsf{P} = 0$), despite the persistence of the nontrivial hyperedge $e_1$. This should not be interpreted as the system being intrinsically pairwise,  rather it indicates that  higher-order interactions are dynamically inactive at quadratic order. This reflects the structure of the critical mode $v$ (see \eqref{eq:critical-eigenventor-in-the-example}), which in this case has no support on the tail nodes when $\omega_e=0$.
    \end{example}

This example highlights a key  insight about the mechanism governing post-onset dynamics. The dynamical relevance of packing tensors is not determined solely by the presence of higher-order hyperedges, but by how they  interact with the critical mode. In particular, packing tensors enter the reduced dynamics (i.e. amplitude equation) only through their contraction with the critical eigendirections, as seen in \eqref{eq:contracted-pair-packing-vectors}  and \eqref{eq:contracted-triple-packing-vectors}. Consequently, \emph{higher-order interactions may be structurally present, yet dynamically invisible at leading order}.

Packing tensors encode node co-occurrence within hyperedges in a combinatorial sense, but only the portion that projects onto the critical eigendirections influences the reduced dynamics. We call this portion \emph{packing effect}, as it represents the part of packing that actually enters the saturation coefficients in the amplitude equation, and it should be distinguished from packing as a purely structural feature (i.e packing tensor). At quadratic order, the packing effect is simply 
\begin{equation}\label{eq:beta-pack}
    \beta_{\mathrm{pack}} = \beta_\mathsf{P}
\end{equation}
while at cubic order it takes the form 
\begin{equation}\label{eq:gamma-pack}
    \gamma_{\mathrm{pack}} = \gamma^B_\mathrm{pack} +  \gamma^C_\mathsf{P} + \gamma_\mathsf{Q}
\end{equation}
where the terms represent direct second and third-order contributions together with indirect (feedback) effects arising through the center-manifold correction $\gamma^B$.

We can now state the following important theorem.

\begin{theorem}[Nonlinear structural decomposition]\label{thm:nonlinear-structural-decomposition}
    The amplitude saturation coefficients decompose as
\begin{equation}
    \beta=\beta_\mathrm{exp}+\beta_\mathrm{pack},
\quad \text{ and } \quad
\gamma=\gamma_\mathrm{exp}+\gamma_\mathrm{pack}
\end{equation}
where the exposure-driven parts $\beta_{\mathrm{exp}}$ and $\gamma_{\mathrm{exp}}$ collect contributions  from the reaction, head, and diagonal components, while the packing-driven parts $\beta_{\mathrm{pack}}$ and $\gamma_{\mathrm{pack}}$ are simply the packing effects given in  \eqref{eq:beta-pack} and \eqref{eq:gamma-pack}
\end{theorem}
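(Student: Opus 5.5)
The plan is to assemble the decomposition directly from Proposition~\ref{prop:nonlinear-saturation-coefficients}, after separating the one term that does not split cleanly. For $\beta$ this is just bookkeeping. Define $\beta_{\mathrm{exp}} := \beta_{\mathrm{react}}+\beta_{\mathrm{head}}+\beta_{\mathrm{diag}}$. By the explicit formulas in that proposition, these three terms involve only $l$, $r$, $\eta$, $\xi$, $f_2$, $s_2$, $d_\mathrm{in}^{\mathrm{tot}}$ and $\Pi^{(1)}$. Since $\mathsf{P}^{(1)}_{ij_1j_2}=\delta_{j_1j_2}A^{(1)}_{ij_1}$, we have $\Pi^{(1)} = A^{(1)} r^{\odot 2}$. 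Moreover $d_\mathrm{in}^{(k)}=A^{(k)}\mathbf{1}$, and $r,l,\xi,\eta$ are spectral data of $L_{\mathrm{tot}}$ and $J_f-\lambda_c D_{\mathrm{eff}}$ (Theorem~\ref{thm:linearization-theorem}). So every ingredient is exposure information. The remaining term $\beta_{\mathsf P}$ is by definition \eqref{eq:beta-pack} equal to $\beta_{\mathrm{pack}}$, which gives $\beta=\beta_{\mathrm{exp}}+\beta_{\mathrm{pack}}$ by \eqref{eq:decomposition-of-nonlinear-coefficients-inside-the-proposition}.

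For $\gamma$, the direct pieces $\gamma^C_{\mathrm{react}}$, $\gamma^C_{\mathrm{head}}$, $\gamma^C_{\mathrm{diag}}$ are exposure-driven by the same argument; here $\Gamma^{(1)}=A^{(1)}r^{\odot 3}$ because $\mathsf{Q}^{(1)}$ is diagonal. The pieces $\gamma^C_{\mathsf P}$ and $\gamma_{\mathsf Q}$ belong to $\gamma_{\mathrm{pack}}$. What remains is the indirect term $\gamma^B = 2w^\top B(v,u)$.

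To split $\gamma^B$, I would use linearity. First decompose $B(v,v)=B_{\mathrm{exp}}(v,v)+B_{\mathsf P}(v,v)$, where
\[
B_{\mathsf P}(v,v)=\tfrac12\,\Pi^{\mathsf P}\otimes s_2
\]
by \eqref{eq:B-tail-kronecker-form}, and $B_{\mathrm{exp}}$ collects the reaction, head and $\Pi^{(1)}$ pieces. Since $J_0$ restricted to the range of $Q$ is invertible (codimension-one assumption \eqref{subsubsec:codimention-1-steady-bifurcation}), the correction splits uniquely as $u=u_{\mathrm{exp}}+u_{\mathsf P}$, with $J_0u_{\bullet}=-QB_{\bullet}(v,v)$. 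In the same way, $B(v,\cdot)$ splits into an exposure part plus a pair-packing part; on a general argument $y$ the latter is $\sum_k\sum_{e}\sum_{j,l\in T} w(e)k^{-2}\,s$-type terms involving $r_j y_l$.

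Then
\[
\gamma^B = 2w^\top B_{\mathrm{exp}}(v,u_{\mathrm{exp}}) + \Big[\,2w^\top B_{\mathrm{exp}}(v,u_{\mathsf P}) + 2w^\top B_{\mathsf P}(v,u)\Big].
\]
I would put the first term into $\gamma_{\mathrm{exp}}$ and define the bracket as $\gamma^B_{\mathrm{pack}}$. The bracket vanishes whenever the pair-packing tensors $\mathsf P^{(k)}$, $k\ge2$, are set to zero, and it depends on them only through contractions with $r$ and $u$. Setting $\gamma_{\mathrm{exp}}:=\gamma^C_{\mathrm{react}}+\gamma^C_{\mathrm{head}}+\gamma^C_{\mathrm{diag}}+2w^\top B_{\mathrm{exp}}(v,u_{\mathrm{exp}})$ then gives $\gamma=\gamma_{\mathrm{exp}}+\gamma_{\mathrm{pack}}$ with $\gamma_{\mathrm{pack}}$ as in \eqref{eq:gamma-pack}.

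The main obstacle is making the split of $\gamma^B$ canonical. The bilinear form $B_{\mathsf P}(v,u)$ contracts $\mathsf P^{(k)}$ against $r\otimes u$ rather than $r\otimes r$, so it is not literally a function of $\Pi^{\mathsf P}$. One has to check that this split is well defined and independent of choices. My approach would be to define the split by the structural projection $\mathsf P^{(k)}\mapsto 0$ for $k\ge 2$, which is the natural convention. With that choice, uniqueness of $u$ under $Q$ makes everything linear and unambiguous. Everything else is a rearrangement of Proposition~\ref{prop:nonlinear-saturation-coefficients}.
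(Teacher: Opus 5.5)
Your proposal is correct and follows the paper's route: the paper's proof is just the regrouping of the terms of Proposition~\ref{prop:nonlinear-saturation-coefficients}, as recorded in \eqref{eq:decomposition-of-beta-and-gamma}--\eqref{eq:decomposition-of-gamma}. You go beyond the paper on the indirect term. The paper uses $\gamma^B_{\mathrm{pack}}$ in \eqref{eq:gamma-pack} without defining it, and your linear split $u=u_{\mathrm{exp}}+u_{\mathsf P}$ (with $u_\bullet\in\operatorname{Range}(Q)$, which makes the split unique), together with the convention $\gamma^B_{\mathrm{pack}}:=\gamma^B-2w^\top B_{\mathrm{exp}}(v,u_{\mathrm{exp}})$, is exactly what is needed to make the statement well posed.
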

\begin{proof}
    I follows from \eqref{eq:decomposition-of-beta-and-gamma} through \eqref{eq:decomposition-of-gamma}.
\end{proof}

This motivates the following definition of weak packing-equivalence

\begin{definition}[Weak packing-equivalence]\label{def:weak-packing-equivalence}
Consider two hypergraph reaction–diffusion systems $\mathcal{H}_A$ and $\mathcal{H}_B$ with dynamics  \emph{(\eqref{eq:hypergraph-dynamics} + \eqref{eq:deepSet-coupling-function})} under the same local reaction and activation functions, and admitting reduced amplitude equations of the form
\[
\dot{A} = \mu A + \beta A^2 + \gamma A^3 + O(A^4).
\]

For $m\in\{2,3\}$, we say that the hypergraphs $\mathcal{H}_A$ and $\mathcal{H}_B$ are weakly packing-equivalent to order $m$ if their packing effects agree up to  order $m$. In particular, for $m = 2$, we have
\begin{equation}
    \beta_{\mathrm{pack}}(\mathcal{H}_A) = \beta_{\mathrm{pack}}(\mathcal{H}_B)
\end{equation}
while for $m = 3$ we have
\begin{equation}
    \beta_{\mathrm{pack}}(\mathcal{H}_A) = \beta_{\mathrm{pack}}(\mathcal{H}_B), \qquad \text{and} \qquad
    \gamma_{\mathrm{pack}}(\mathcal{H}_A) = \gamma_{\mathrm{pack}}(\mathcal{H}_B)
\end{equation}
\end{definition}

\begin{remark}
The qualifier "weak" in Definition~\ref{def:weak-packing-equivalence} indicates that only the packing effects appearing in the saturation coefficients are compared, rather than the full packing tensors or the underlying hypergraph structure.
\end{remark}

The following theorem characterizes the condition under which two hypergraph-coupled systems are indistinguishable at the level of the amplitude equation, thus identifying the structural features that govern this dynamics.

\begin{theorem}[Reduced dynamics equivalence]\label{thm:nonlinear-identifiability}
Fix $m\in \{2,3\}$, and consider two hypergraph reaction–diffusion systems $\mathcal{H}_A$ and $\mathcal{H}_B$ with dynamics  \emph{(\eqref{eq:hypergraph-dynamics} + \eqref{eq:deepSet-coupling-function})} under the same local reaction and activation functions, and admitting reduced amplitude equations of the form
\[
\dot{A} = \mu A + \beta A^2 + \gamma A^3 + O(A^4).
\]

If the two systems are exposure-equivalent (see Definition~\ref{def:first-tail-moment-equivalence}) and weakly packing-equivalent to order $m$ (see Definition~\ref{def:weak-packing-equivalence}), then their reduced dynamics agree up to order $m$. In particular, for $m = 2$, we have
\begin{equation}
    \mu(\mathcal{H}_A) = \mu(\mathcal{H}_B) \qquad \text{and} \qquad
    \beta(\mathcal{H}_A) = \beta(\mathcal{H}_B)
\end{equation}
while for $m = 3$ we have
\begin{equation}
    \mu(\mathcal{H}_A) = \mu(\mathcal{H}_B), \qquad
    \beta(\mathcal{H}_A) = \beta(\mathcal{H}_B), \qquad
    \text{and} \qquad
    \gamma(\mathcal{H}_A) = \gamma(\mathcal{H}_B)
\end{equation}
\end{theorem}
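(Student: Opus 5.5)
The argument has three steps. First I show that exposure-equivalence fixes all linear data, including the critical eigenvectors. Next I use that to show the exposure-driven parts of $\beta$ and $\gamma$ coincide. Finally I add the packing effects, which agree by hypothesis, using the additive split of Theorem~\ref{thm:nonlinear-structural-decomposition}.

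\textbf{Step 1: linear data.} Since $\mathcal H_A$ and $\mathcal H_B$ are exposure-equivalent, $A^{(k)}(\mathcal H_A)=A^{(k)}(\mathcal H_B)$ for every $k$. Because $d_\mathrm{in}^{(k)}=A^{(k)}\mathbf 1$, the in-degree vectors also agree. Then \eqref{eq:chol-matrix-decomposition} gives $L^{(k)}(\mathcal H_A)=L^{(k)}(\mathcal H_B)$, and hence the same $L_{\mathrm{tot}}$. The two systems share $f$, $\sigma$ and $M$, so $J_f$ and $D_{\mathrm{eff}}$ coincide. By Theorem~\ref{thm:linearization-theorem} and Corollary~\ref{cor:linear-indistinguishability}, $J_0$, $J_1$ and the critical parameter $p_c$ are identical.

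I then fix a common choice of the critical eigen-data $\lambda_c, r, l, \xi, \eta$ for both systems. This is legitimate because they are defined by \eqref{eq:right-n-left-eigenvectors-of-Ltot}--\eqref{eq:right-n-left-eigenvectors-of-Species-matrix} from shared matrices. By the codimension-one assumption they are unique up to the normalization $l^\top r=1$, $\eta^\top\xi=1$. Consequently $v=r\otimes\xi$ and $w=l\otimes\eta$ agree, and so does $\mu=\epsilon\alpha=\epsilon\,w^\top J_1 v$, as already given by \eqref{eq:alpha-with-Deff}.

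\textbf{Step 2: exposure parts.} Next I use Theorem~\ref{thm:nonlinear-structural-decomposition} to write $\beta=\beta_\mathrm{exp}+\beta_\mathrm{pack}$ and $\gamma=\gamma_\mathrm{exp}+\gamma_\mathrm{pack}$. Reading off Proposition~\ref{prop:nonlinear-saturation-coefficients}, each exposure term depends only on $l$, $r$, $\eta$, the responses $f_2,f_3,s_2,s_3$, the total in-degree $d^\mathrm{tot}_\mathrm{in}$ and $\Pi^{(1)}$. The quantity $\Pi^{(1)}$ involves only $\mathsf P^{(1)}_{ij_1j_2}=\delta_{j_1j_2}A^{(1)}_{ij_1}$, which is exposure data; the same holds for $\Gamma^{(1)}$, the diagonal cubic term. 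The responses $f_2,f_3,s_2,s_3$ depend only on $\xi$ and the shared local functions. Hence $\beta_\mathrm{exp}$ and $\gamma_\mathrm{exp}$ agree.

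\textbf{Step 3: combine.} Weak packing-equivalence to order $m$ gives equality of $\beta_\mathrm{pack}$, which settles $m=2$. For $m=3$ it also gives equality of $\gamma_\mathrm{pack}$, and adding this to Step 2 gives equality of $\gamma$.

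\textbf{Main obstacle.} The delicate point is the indirect term $\gamma^B=2w^\top B(v,u)$, where $u$ solves $J_0u=-QB(v,v)$. Here $B(v,v)$ involves the full pair-packing vector $\Pi^{\mathsf P}$, not only its projection $l^\top\Pi^{\mathsf P}$. So $\gamma^B$ does not split cleanly into an exposure part and a packing part that depends only on scalar packing effects. I would handle this by following the paper's convention in \eqref{eq:gamma-pack}. I would split $B=B^\mathrm{exp}+B^{\mathsf P}$, with $B^{\mathsf P}$ the $k\ge2$ tail part. Since $u$ is linear in $B(v,v)$, write $u=u^\mathrm{exp}+u^{\mathsf P}$, where $u^\mathrm{exp}$ solves $J_0u^\mathrm{exp}=-QB^\mathrm{exp}(v,v)$ and is determined by the shared $J_0$, $Q$, $v$. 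Then I assign $2w^\top B^\mathrm{exp}(v,u^\mathrm{exp})$ to $\gamma_\mathrm{exp}$, which coincides across the two systems by Step 2. Every remaining cross term involving $B^{\mathsf P}$ or $u^{\mathsf P}$ goes into $\gamma^B_\mathrm{pack}$. The hypothesis $\gamma_\mathrm{pack}(\mathcal H_A)=\gamma_\mathrm{pack}(\mathcal H_B)$ then covers these terms, and the proof goes through once the split is fixed this way.
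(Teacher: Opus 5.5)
Your proposal is correct and follows the paper's own argument. Exposure-equivalence fixes $\mu=\epsilon\alpha$ together with the exposure-driven parts $\beta_\mathrm{exp},\gamma_\mathrm{exp}$, weak packing-equivalence fixes $\beta_\mathrm{pack},\gamma_\mathrm{pack}$, and the additive split of Theorem~\ref{thm:nonlinear-structural-decomposition} concludes. Your explicit treatment of the feedback term $\gamma^B=2w^\top B(v,u)$, splitting $B=B^\mathrm{exp}+B^{\mathsf P}$ and $u=u^\mathrm{exp}+u^{\mathsf P}$ under a common choice of critical eigenvectors, is a useful sharpening: the paper leaves $\gamma^B_\mathrm{pack}$ undefined, and a convention of this kind is what makes $\gamma_\mathrm{exp}$ genuinely determined by exposure data.
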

\begin{proof}
    First, note that the linear coefficient $\mu = \epsilon\alpha$ depends only on exposure information (see \eqref{eq:alpha-with-Deff}). Next, exposure-equivalence implies equality of $\mu$, $\beta_\mathrm{exp}$ and $\gamma_\mathrm{exp}$, while  weak packing-equivalence implies equality of $\beta_\mathrm{pack}$ and $\gamma_\mathrm{pack}$. Finally, conclude using Theorem~\ref{thm:nonlinear-structural-decomposition}.
\end{proof}

The following  corollary characterizes nonlinear distinguishability.

\begin{corollary}[Nonlinear distinguishability]\label{cor:nonlinear-distinguishability}
Fix an integer $m\in \{2,3\}$. If two hypergraph-coupled systems are exposure-equivalent, but not weakly packing-equivalent to order $m$, then their reduced dynamics must differ at some order $1< p \le m$. 
\end{corollary}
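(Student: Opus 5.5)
The plan is to argue via the decompositions of Theorem~\ref{thm:nonlinear-structural-decomposition}, using exposure-equivalence to cancel all exposure-driven parts.

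First, fix the common local reaction $f$, activation $\sigma$ and mixing matrix $M$. By Corollary~\ref{cor:linear-indistinguishability}, exposure-equivalence gives identical $L_{\mathrm{tot}}$. Hence the two systems share the same $J_0$, the same critical eigenvalue $\lambda_c$, the same eigenvectors $r,l,\xi,\eta$ (after a common normalization), and the same $v,w$. By \eqref{eq:alpha-with-Deff} they also share $\mu=\epsilon\alpha$. The same data fix $d_{\mathrm{in}}^{\mathrm{tot}}=\sum_k A^{(k)}\mathbf 1$ and $\Pi^{(1)}$ (since $\mathsf P^{(1)}$ is diagonal with entries from $A^{(1)}$), and likewise $\Gamma^{(1)}$. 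By the explicit formulas of Proposition~\ref{prop:nonlinear-saturation-coefficients}, it follows that
\[
\beta_{\mathrm{exp}}(\mathcal H_A)=\beta_{\mathrm{exp}}(\mathcal H_B), \qquad \gamma_{\mathrm{exp}}(\mathcal H_A)=\gamma_{\mathrm{exp}}(\mathcal H_B).
\]

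Second, split into cases according to which packing effect differs, since failure of weak packing-equivalence to order $m$ means such a difference exists. If $\beta_{\mathrm{pack}}(\mathcal H_A)\neq\beta_{\mathrm{pack}}(\mathcal H_B)$, then Theorem~\ref{thm:nonlinear-structural-decomposition} gives $\beta_A-\beta_B=\beta_{\mathrm{pack},A}-\beta_{\mathrm{pack},B}\neq0$, so the reduced dynamics differ at order $p=2$. Otherwise $m=3$, the $\beta_{\mathrm{pack}}$ agree, and $\gamma_{\mathrm{pack}}$ differ, so $\gamma_A-\gamma_B\neq0$ and the reduced dynamics differ at order $p=3$. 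Equivalently, this is the contrapositive of Theorem~\ref{thm:nonlinear-identifiability}, sharpened to locate the order of the discrepancy.

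The main obstacle is that the cubic coefficient contains the feedback term $\gamma^B=2w^\top B(v,u)$. This term has to be split cleanly into an exposure part and $\gamma^B_{\mathrm{pack}}$, so that the exposure part genuinely coincides for both systems. I would handle this by writing $u=u_{\mathrm{exp}}+u_{\mathrm{pack}}$, using linearity of $J_0^{-1}$ on the range of $Q$: the operators $J_0$ and $Q$ are common to both systems, and $B(v,v)$ splits additively into exposure and pair-packing pieces. I would then define $\gamma^B_{\mathrm{exp}}:=2w^\top(B^{\mathrm{react}}+B^{\mathrm{head}}+B^{\mathrm{diag}})(v,u_{\mathrm{exp}})$ and assign everything else to $\gamma^B_{\mathrm{pack}}$. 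This convention matches \eqref{eq:gamma-pack} and makes $\gamma_{\mathrm{exp}}$ depend only on exposure data.

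A further subtlety remains, which I would flag. $B^{\mathrm{tail}}(v,u)$ with $u$ not of Kronecker form $r\otimes\xi$ involves $\mathsf P^{(k)}$ contracted against $r$ and the components of $u$, not only $r\otimes r$. It is therefore assigned to $\gamma^B_{\mathrm{pack}}$ by definition. This is consistent with the corollary as stated, since the corollary only compares packing effects as defined.
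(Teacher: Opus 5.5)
Your argument is correct and uses the paper's ingredients: exposure-equivalence fixes $L_{\mathrm{tot}}$, hence $J_0$, $v$, $w$, $\mu$ and the exposure parts of $\beta,\gamma$; Theorem~\ref{thm:nonlinear-structural-decomposition} then isolates the packing effects. It is, however, more careful than the paper's one-line proof, which only says the corollary ``follows from Theorem~\ref{thm:nonlinear-identifiability}''.

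\emph{The logical direction.} That theorem gives $(\mathrm{E}\wedge\mathrm{W}_m)\Rightarrow\mathrm{R}_m$, where $\mathrm{E}$ is exposure-equivalence, $\mathrm{W}_m$ is weak packing-equivalence to order $m$, and $\mathrm{R}_m$ is agreement of the reduced dynamics to order $m$. Its contrapositive is $\neg\mathrm{R}_m\Rightarrow(\neg\mathrm{E}\vee\neg\mathrm{W}_m)$. The corollary is instead $(\mathrm{E}\wedge\neg\mathrm{W}_m)\Rightarrow\neg\mathrm{R}_m$, a converse-type statement that does not follow from the theorem as stated. What proves it is exactly your direct argument: under $\mathrm{E}$ one has $\beta_A-\beta_B=\beta_{\mathrm{pack},A}-\beta_{\mathrm{pack},B}$ and $\gamma_A-\gamma_B=\gamma_{\mathrm{pack},A}-\gamma_{\mathrm{pack},B}$. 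Your case split then locates $p\in\{2,3\}$. So delete the sentence calling this ``the contrapositive of Theorem~\ref{thm:nonlinear-identifiability}''; it is false, and it is the only incorrect claim in your proposal.

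\emph{The $\gamma^B$ convention.} This is a genuine addition. The paper never defines $\gamma^B_{\mathrm{pack}}$ in \eqref{eq:gamma-pack}. Both the corollary and the paper's assertion that exposure-equivalence forces $\gamma_{\mathrm{exp}}$ to agree need a convention in which $\gamma_{\mathrm{exp}}$ depends on exposure data alone. If one instead put $2w^\top(B^{\mathrm{react}}+B^{\mathrm{head}}+B^{\mathrm{diag}})(v,u)$ with the full $u$ into the exposure part, then $u$ would carry pair-packing information. Exposure-equivalent systems could then have different $\gamma_{\mathrm{exp}}$, and packing differences could cancel in $\gamma$. Your split $u=u_{\mathrm{exp}}+u_{\mathrm{pack}}$, obtained from the common invertible operator $J_0$ restricted to $\operatorname{Range}(Q)$, is what makes that claim, and hence the $m=3$ case, go through.
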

\begin{proof}
    The proof follows from Theorem~\ref{thm:nonlinear-identifiability}.
\end{proof}

The dependence of the reduced dynamics on exposure and packing effects leads naturally to the concept of \emph{dynamical graph surrogacy}, which identifies the conditions under which a hypergraph-coupled system can be represented by an equivalent graph at the level of its reduced dynamics.

\begin{corollary}[Dynamical graph surrogacy]\label{cor:graph-surrogacy}
    Fix $m \in \{2,3\}$. A hypergraph-coupled system admits a \emph{dynamical graph surrogate} up to order $m$ if and only if its packing effects vanish up to order $m$. In this case, the system is dynamically indistinguishable from a graph-based system at the level of the reduced dynamics up to order $m$.
\end{corollary}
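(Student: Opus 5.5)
The natural approach is to make precise what ``graph surrogate'' means and then reduce both directions to Theorem~\ref{thm:nonlinear-identifiability}. Given the hypergraph-coupled system on $\mathcal H$, I will build a candidate surrogate $\mathcal G$: a rank-$2$ B-hypergraph (a weighted directed graph, so only $k=1$ is present) whose exposure matrix equals the total exposure of $\mathcal H$. Concretely, I set $A^{(1)}(\mathcal G) := \sum_{k=1}^{\rho-1} A^{(k)}(\mathcal H)$, so that $\mathcal G$ has an arc $j\to i$ with weight $\sum_k A^{(k)}_{ij}(\mathcal H)$. This gives $L^{(1)}(\mathcal G)=L_{\mathrm{tot}}(\mathcal H)$ and $d^{(1)}_{\mathrm{in}}(\mathcal G)=d^{\mathrm{tot}}_{\mathrm{in}}(\mathcal H)$, using $d_{\mathrm{in}}^{(k)}=A^{(k)}\mathbf 1$. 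Definition~\ref{def:first-tail-moment-equivalence} is stated order by order, so I will first remark that Theorem~\ref{thm:linearization-theorem} and Proposition~\ref{prop:nonlinear-saturation-coefficients} depend on the exposures only through $L_{\mathrm{tot}}$, $d^{\mathrm{tot}}_{\mathrm{in}}$ and the diagonal term. I will therefore use this aggregated notion of exposure-equivalence, which is what ``surrogate'' should mean here. The consequences are then as follows. By Corollary~\ref{cor:linear-indistinguishability}, $J_0$, $\lambda_c$, $r,l,\xi,\eta$ and hence $v,w$ coincide for $\mathcal H$ and $\mathcal G$. By \eqref{eq:alpha-with-Deff}, the coefficient $\alpha$, and hence $\mu$, coincides.

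Next I will check that every exposure-driven term agrees. The terms $\beta_{\mathrm{react}}$ and $\gamma^C_{\mathrm{react}}$ depend only on $l,r,\eta,f$. The terms $\beta_{\mathrm{head}}$ and $\gamma^C_{\mathrm{head}}$ depend on $d^{\mathrm{tot}}_{\mathrm{in}}$. For the diagonal terms there is a subtlety: in $\mathcal H$ they involve only $\Pi^{(1)}$ built from $A^{(1)}(\mathcal H)$, whereas in $\mathcal G$ the order-one packing is $\mathsf P^{(1)}_{ij_1j_2}=\delta_{j_1j_2}A^{(1)}_{ij_1}(\mathcal G)$, which also contains the $k\ge2$ exposures. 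I will handle this by writing, for each $k\ge2$, $\mathsf P^{(k)}=\mathsf P^{(k)}_{\mathrm{diag}}+\mathsf P^{(k)}_{\mathrm{off}}$, and similarly for the triple tensors. The diagonal parts are fixed by $A^{(k)}$ through Proposition~\ref{prop:pth-tail-moment-as-tensor-contraction}, so I will absorb them into $\beta_{\mathrm{exp}}$ and $\gamma_{\mathrm{exp}}$. The packing effect that is required to vanish is then the off-diagonal contracted part, which I take to be the intended reading of Theorem~\ref{thm:nonlinear-structural-decomposition}.

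For the direction ``packing effects vanish $\Rightarrow$ surrogate'', a graph has $\rho=2$, so $\Pi^{\mathsf P}=0$, $\Gamma^{(2)}=\Gamma^{\mathsf Q}=0$, and its packing effects are zero. If $\beta_{\mathrm{pack}}(\mathcal H)=0$, and $\gamma_{\mathrm{pack}}(\mathcal H)=0$ when $m=3$, then $\mathcal H$ and $\mathcal G$ are weakly packing-equivalent to order $m$, and Theorem~\ref{thm:nonlinear-identifiability} gives agreement of $\mu,\beta$ (and $\gamma$). For the converse, suppose some graph system $\mathcal G'$ has reduced dynamics equal to that of $\mathcal H$ up to order $m$ and is exposure-equivalent to $\mathcal H$. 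This hypothesis is what makes it a surrogate rather than an arbitrary graph. Then $\beta_{\mathrm{exp}}$ and $\gamma_{\mathrm{exp}}$ coincide by the previous step, and Theorem~\ref{thm:nonlinear-structural-decomposition} gives $\beta_{\mathrm{pack}}(\mathcal H)=\beta(\mathcal H)-\beta_{\mathrm{exp}}=\beta(\mathcal G')-\beta_{\mathrm{exp}}=\beta_{\mathrm{pack}}(\mathcal G')=0$, and the same for $\gamma$.

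The main obstacle is the feedback term $\gamma^B=2w^\top B(v,u)$ with $J_0u=-QB(v,v)$. It is not a pure contraction, so I must check that its packing part $\gamma^B_{\mathrm{pack}}$ is well defined. I will split $B=B_{\mathrm{exp}}+B_{\mathrm{pack}}$ and write $u=u_{\mathrm{exp}}+u_{\mathrm{pack}}$ by linearity, which is valid because $J_0$ and $Q$ are shared. I then define $\gamma^B_{\mathrm{pack}}:=\gamma^B-2w^\top B_{\mathrm{exp}}(v,u_{\mathrm{exp}})$. With this definition the exposure remainder is identical for $\mathcal H$ and $\mathcal G$, which is exactly what both directions need.
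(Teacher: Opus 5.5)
You correctly notice something the paper's one-line proof (a direct appeal to Theorem~\ref{thm:nonlinear-identifiability}) passes over. A graph can never be exposure-equivalent to a genuine hypergraph in the per-order sense of Definition~\ref{def:first-tail-moment-equivalence}, so one must build the surrogate $\mathcal G$ with $A^{(1)}(\mathcal G)=\sum_k A^{(k)}(\mathcal H)$ and compare against it directly.

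The gap is in your repair of the diagonal terms. You claim that once the diagonal parts of $\mathsf P^{(k)}$ (and $\mathsf Q^{(k)}$) are absorbed into $\beta_{\mathrm{exp}},\gamma_{\mathrm{exp}}$, the exposure remainder is identical for $\mathcal H$ and $\mathcal G$. This is false. From the definitions, $\mathsf P^{(k)}_{ijj}=\frac1k A^{(k)}_{ij}$, whereas the surrogate carries $A^{(k)}_{ij}$ itself on its order-one diagonal. Concretely, a hyperedge $e=(T\to i)\in E_k$ contributes $\frac12 l_i\frac{w(e)}{k^2}\big(\sum_{j\in T}r_j\big)^2\,\eta^\top s_2$ to $\beta(\mathcal H)$. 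Its $k$ replacement arcs of weight $w(e)/k$ contribute $\frac12 l_i\frac{w(e)}{k}\sum_{j\in T}r_j^2\,\eta^\top s_2$ to $\beta(\mathcal G)$. So after your absorption the two ``exposure'' parts still differ by $\frac12(\eta^\top s_2)\sum_{k\ge2}(1-\frac1k)\,l^\top A^{(k)}r^{\odot2}$. Moreover, your $\beta_{\mathrm{exp}}(\mathcal H)$ involves $\sum_k\frac1k A^{(k)}$, which the aggregated exposure you matched does not determine. The actual discrepancy is
\begin{equation*}
\beta(\mathcal H)-\beta(\mathcal G)=\frac12\,(\eta^\top s_2)\sum_{i=1}^N l_i\sum_{k\ge 2}\sum_{e=(T\to i)\in E_k}w(e)\Big[\Big(\frac1k\sum_{j\in T}r_j\Big)^2-\frac1k\sum_{j\in T}r_j^2\Big],
\end{equation*}
that is, $-w(e)$ times the variance of $r$ over each tail. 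This is a Jensen gap between ``mean then activate'' and ``activate then mean''.

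Already for one hyperedge $\{a,b\}\to i$, with $l_i\,\eta^\top s_2\neq0$, it equals $-\frac{w(e)}{8}(r_a-r_b)^2\,l_i\,\eta^\top s_2$, which vanishes iff $r_a=r_b$. Your off-diagonal packing effect vanishes iff $r_ar_b=0$. So $r_a=0\neq r_b$ breaks your forward direction, and $r_a=r_b\neq0$ breaks your converse. The same $1/k^2$ versus $1/k$ weight mismatch sits in the bilinear maps themselves. Hence $B_{\mathrm{exp}}(\mathcal H)\neq B_{\mathcal G}$, $u_{\mathrm{exp}}$ differs between the two systems, and your $\gamma^B_{\mathrm{pack}}$ does not leave a common remainder either.

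The missing idea is to define the packing effect as the discrepancy with the surrogate itself. Replace $\mathsf P^{(k)}$ by $\mathsf P^{(k)}-\widetilde{\mathsf P}^{(k)}$ with $\widetilde{\mathsf P}^{(k)}_{ij_1j_2}:=\delta_{j_1j_2}A^{(k)}_{ij_1}$, and similarly for $\mathsf Q^{(k)}$. Equivalently, set $B_{\mathrm{pack}}:=B_{\mathcal H}-B_{\mathcal G}$ and $C_{\mathrm{pack}}:=C_{\mathcal H}-C_{\mathcal G}$. Because $J_0,v,w,Q$ and $\alpha$ are shared, you get $\beta(\mathcal H)-\beta(\mathcal G)=w^\top B_{\mathrm{pack}}(v,v)$ and $\gamma(\mathcal H)-\gamma(\mathcal G)=w^\top\big(2B_{\mathrm{pack}}(v,u_{\mathcal H})+2B_{\mathcal G}(v,u_{\mathcal H}-u_{\mathcal G})+C_{\mathrm{pack}}(v,v,v)\big)$, with $J_0(u_{\mathcal H}-u_{\mathcal G})=-QB_{\mathrm{pack}}(v,v)$. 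With your definition the surrogate is unique: a graph with the same $L_{\mathrm{tot}}$ and no self-loops must be $\mathcal G$. Both directions then follow at once.

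This obstruction is neither your off-diagonal part nor the paper's $\beta_{\mathsf P}$, which keeps the $1/k^2$ diagonal; for the single hyperedge, $\beta_{\mathsf P}$ vanishes iff $r_a+r_b=0$. The paper's proof tacitly identifies ``the graph model'' with the exposure-driven parts $\beta_{\mathrm{exp}},\gamma_{\mathrm{exp}}$. Your explicit construction is precisely what shows that an actual graph surrogate does not have those coefficients.
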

\begin{proof}
The result follows from Theorem~\ref{thm:nonlinear-identifiability}, and noting that packing effects \eqref{eq:beta-pack}--\eqref{eq:gamma-pack} quantify the "obstruction" to representing hypergraph dynamics by an exposure-driven (i.e. graph) model at the level of the reduced dynamics. 
\end{proof}

It is important to emphasize that \emph{dynamical graph surrogacy} does not imply the absence of higher-order interactions in the hypergraph. Rather, it reflects the \emph{absence of their net contribution} to the reduced dynamics up to the specified order.

\begin{remark}[Quadratic surrogacy and curvature-free activation]\label{rem:curvature-free-activation-and-quad-surrogacy}

We note that the quadratic packing effect coefficient $\beta_\mathsf{P}$ depends jointly on the higher-order hypergraph structure and on the local geometry of the activation function (see \eqref{eq:direct-pair-packing-componebts-to-beta-and-gamma}). In particular, if the activation has vanishing second derivative (i.e. $D^2\sigma(\bar{x}) \coloneqq \sigma''(\bar{x})$, which we call curvature since the activation acts componentwise) at the homogeneous equilibrium, then the quadratic packing effect necessarily vanishes. More specifically
\begin{equation}\label{eq:zero-curvature}
    D^2\sigma(\bar{x}) = 0
    \qquad \Longrightarrow \qquad
    \beta_\mathsf{P}=0.
\end{equation}

Thus, even in the presence of nontrivial higher-order interactions, quadratic packing effects become dynamically invisible whenever the activation is locally curvature-free around equilibrium.

Typical examples include
\begin{equation}
    \sigma(x)=a+b\tanh(x-\bar{x}),
    \qquad
    \sigma(x)=x+(x-\bar{x})^{p},
    \quad p>2.
\end{equation}

For such activations, the weakly nonlinear reduction eliminates the quadratic packing term entirely.

This reveals two distinct mechanisms by which higher-order effects may fail to appear in the reduced dynamics. The first is \emph{structural invisibility}, arising when the relevant packing effects vanish because of the hypergraph organization itself. The second is \emph{functional invisibility}, arising when the local curvature of the interaction nonlinearity suppresses higher-order effects independently of the underlying topology.

Consequently, the dynamical visibility of higher-order interactions depends not only on combinatorial structure, but also on the local differential geometry of the nonlinear response.
\end{remark}

\subsection{Interpretation}

The nonlinear analysis shows that higher-order interactions affect the dynamics solely through their contributions to the saturation coefficients. These contributions are therefore the only features of higher-order structure that are reflected in the reduced dynamics, and differences in them lead to distinguishable nonlinear behavior.
This observation motivates the notion of dynamical graph surrogacy. A hypergraph-coupled system may be dynamically equivalent to a graph despite the presence of higher-order interactions, provided that their net contribution to the reduced dynamics vanishes.

\section{Numerical Validation}\label{sec:section6}

We now validate the theoretical predictions on finite hypergraph-coupled reaction-diffusion systems. Since exposure-equivalence guarantees identical linearized dynamics by construction, the numerical experiments focus primarily on the nonlinear regime, where differences in packing effects become dynamically observable.

The main objective is to test two theoretical predictions developed in the previous sections. More specifically, we ascertain that exposure-equivalence alone does not determine nonlinear behavior beyond instability onset, and dynamic graph surrogacy is obstructed by nonzero packing effects. This latter statement simply means that a graph surrogate reproduces the hypergraph dynamics only when the relevant packing effects vanish.

\subsection{Numerical setup}

We consider finite directed hypergraph-coupled reaction-diffusion systems with Schnakenberg \cite{schnakenberg1979simple}  local kinetics. Each node carries state variables $(u_i,v_i)$ satisfying
\begin{align}
    \dot{u}_i &= a-u_i+u_i^2v_i \\
    \dot{v}_i &= b-u_i^2v_i
\end{align}

Throughout the numerical experiments, the local kinetic parameters are fixed at $a = 0.2$ and $b=1.3$.
The corresponding homogeneous equilibrium is

\begin{equation}
u^*=a+b=1.5,
\qquad
v^*=\frac{b}{(a+b)^2}\approx0.57778.
\end{equation}

For all interaction orders, the mixing matrices are taken to be
$
    M=\mathrm{diag}(d_u,d_v),
$
where $d_u = 0.25$, and  the inhibitor diffusion coefficient $d_v$ serving as the bifurcation parameter. 

To generate nonlinear higher-order effects, we use polynomial activation functions of the form
\begin{equation}
    \sigma(z) = c_1 z+c_2 z^2+c_3 z^3.
\end{equation}

Separate activation functions are used for the activator and inhibitor components:
\begin{equation}
\sigma_u(z)
=
c_{1,u}z+c_{2,u}z^2+c_{3,u}z^3,
\qquad
\sigma_v(z)
=
c_{1,v}z+c_{2,v}z^2+c_{3,v}z^3.
\end{equation}

Unless otherwise specified, the coefficients are chosen as $
c_{1,u}=1.1,~
c_{2,u}=0.2,~
c_{3,u}=0.05,
$ and $
c_{1,v}=5.9,~
c_{2,v}=-0.15,~
c_{3,v}=0.4.
$
These choices generate nontrivial quadratic and cubic nonlinearities and allow controlled suppression of the pair packing effect in the graph-surrogacy experiments through the condition
\(
\sigma''(\bar{x})=0
\).

First, we consider the pair of hypergraphs whose complete description is given in Table~\ref{tab:HA_HB} in the Appendix. This pair is exposure-equivalent but not weakly packing-equivalent. This pair is  used to validate nonlinear distinguishability under exposure-equivalence shown in Figure~\ref{fig:pairI_distinguishability}. Then we use the hypergraph described in Table~\ref{tab:H3} for all subsequent experiments.


All simulations are performed near the instability threshold,
\begin{equation}
    d_v=d_v^c+\varepsilon,
\qquad
\varepsilon>0,
\end{equation}
where $d_v^c$ denotes the critical value predicted by the linear theory.

The modal amplitude is extracted by projection onto the critical left eigenvector $w$ as follows
\begin{equation}
    A_{\mathrm{num}}(t)
=
 w^\top\left(X(t)-\bar{X}\right),
\end{equation}
where $X(t)$ denotes the full system state, $\bar X$ is the homogeneous equilibrium, and the critical eigenvectors are normalized by
$
    w^\top v=1.
$

The saturated amplitude $A_{\mathrm{sat}}$ is estimated by averaging $|A_{\mathrm{num}}(t)|$ over the terminal portion of the trajectory after convergence. Additional diagnostics include the relative standard deviation and relative drift over the averaging window to ensure proper convergence.

\subsection{Nonlinear Distinguishability under Exposure-Equivalence}

Figure~\ref{fig:pairI_distinguishability} shows the saturated amplitudes near onset for Pair I.

Although the two systems possess identical exposure operators and therefore identical linear instability thresholds, their post-onset amplitudes differ substantially as one moves farther away from onset. This demonstrates that exposure-equivalence alone is insufficient to determine nonlinear dynamics beyond onset.

The observed discrepancy is consistent with the theory developed earlier. While the two systems agree at linear order, their packing effects differ, thus leading to different reduced coefficients in the amplitude equation and therefore different nonlinear saturation behavior.

Table~\ref{tab:pairI_coefficients} reports the corresponding coefficient data for Pair I, including the common instability threshold $d_v^c$, the packing effects $\beta_{\mathrm{pack}}$ and $\gamma_{\mathrm{pack}}$, and the measured saturation amplitudes. The table makes explicit that the two systems are linearly indistinguishable but nonlinearly distinguishable.

\begin{figure}[H]
\centering
\includegraphics[width=0.49\textwidth]{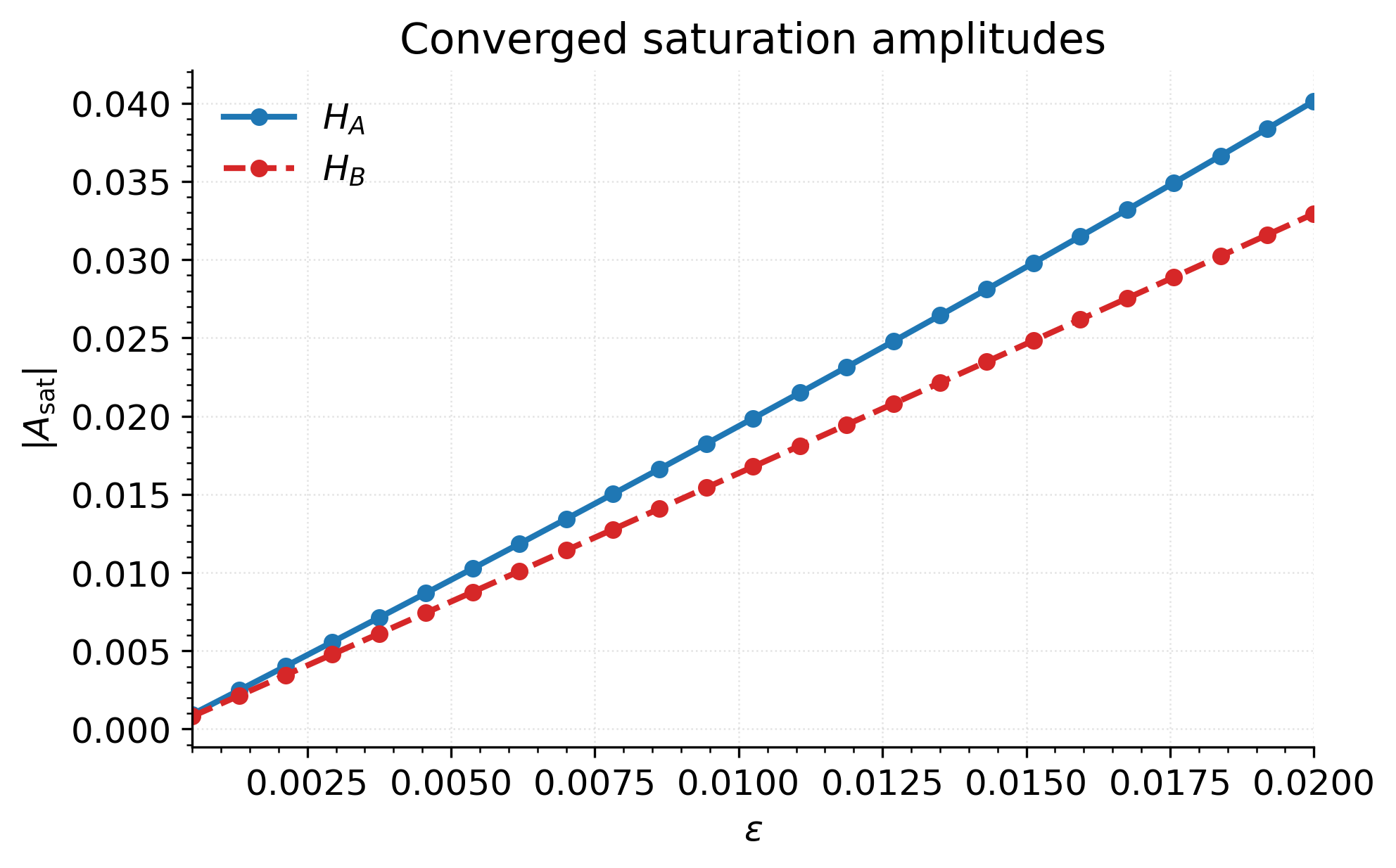}
\includegraphics[width=0.49\textwidth]{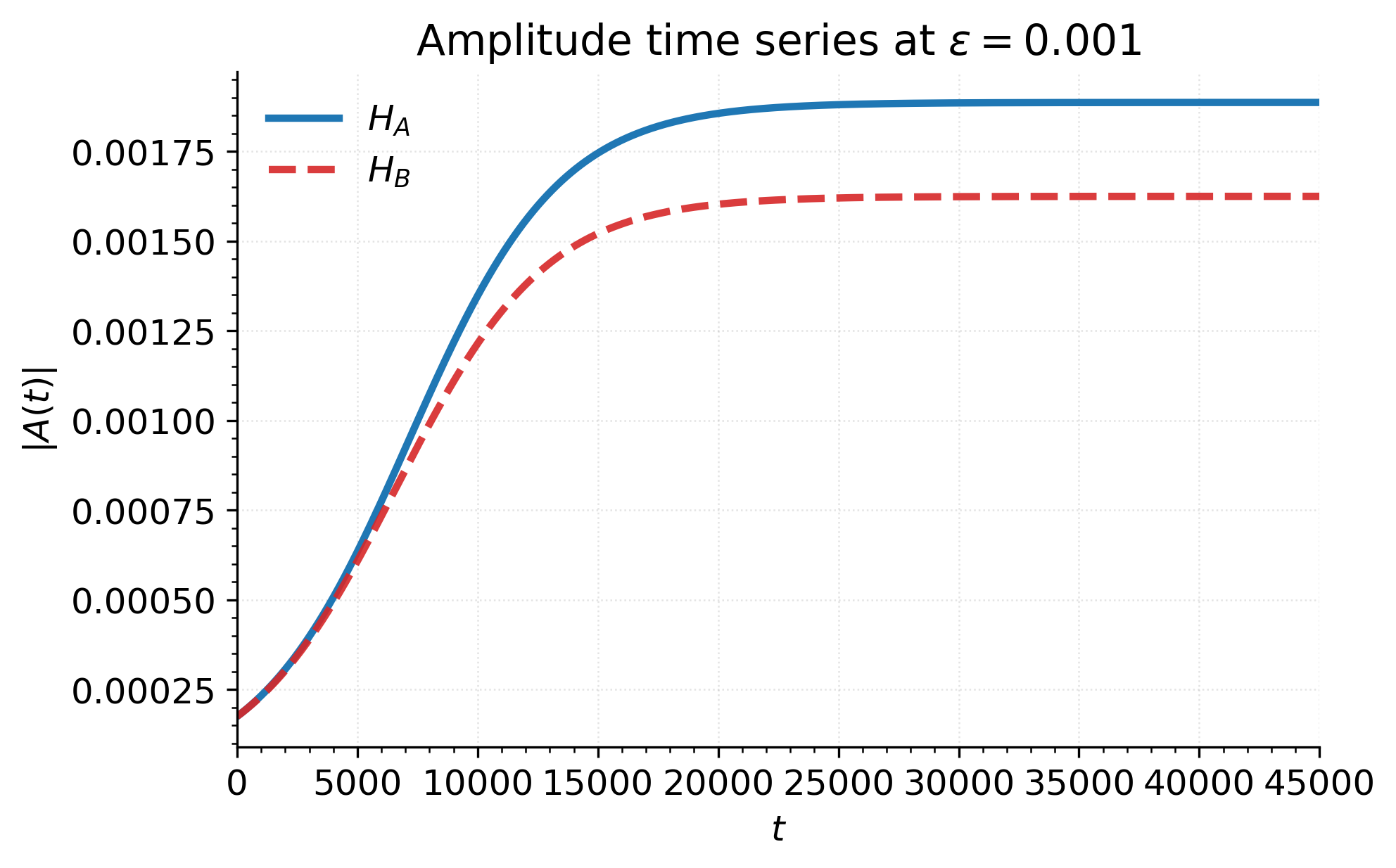}
\caption[Nonlinear distinguishability for Pair in Table~\ref{tab:HA_HB}]{
Nonlinear distinguishability for the pair of hypergraphs in Table~\ref{tab:HA_HB}.
\textbf{Left:}
Saturated amplitudes as a function of $\varepsilon = d_v - d_v^c$.
Although $\mathcal{H}_A$ and $\mathcal{H}_B$ are exposure-equivalent and therefore share an identical linear instability threshold, their nonlinear saturation amplitudes differ due to distinct packing effects. The discrepency is more pronounced the further away from onset (i.e larger $\varepsilon$). Moreover, the plots are consistent with the asymptotic prediction in \eqref{eq:saturating-amplitude-resonant-case}.
\textbf{Right:}
Representative amplitude trajectories $A(t)$ at fixed $\varepsilon = 0.001$.
Both systems exhibit similar early-time growth near onset consistent with exposure-equivalence, but  they eventually converge to distinct saturated states. This demonstrates nonlinear distinguishability beyond the linear regime.
}
\label{fig:pairI_distinguishability}
\end{figure}

\begin{table}[H]
\centering
\caption{
Coefficient comparison for the pair of hypergraphs in Table~\ref{tab:HA_HB}.
The two systems are exposure-equivalent and therefore share the same linear instability threshold $d_v^c$, but they differ in their packing effects and nonlinear reduced coefficients. The resulting  saturated amplitudes $|A_{\mathrm{sat}}|$ confirms nonlinear distinguishability beyond the linear regime. See Figure~\ref{fig:pairI_distinguishability} for the corresponding numerical comparison.
}
\label{tab:pairI_coefficients}
\renewcommand{\arraystretch}{1.15}
\begin{tabular}{c c c c c c c}
\toprule
System
& $d_v^c$
& $\beta$
& $\beta_{\mathrm{pack}}$
& $\gamma$
& $\gamma_{\mathrm{pack}}$
& $|A_{\mathrm{sat}}|$
\\

\midrule

$\mathcal{H}_A$
& 1.30079
& 0.17014
& -0.01313
& 0.15544
& 0.06949
& 0.00189
\\

$\mathcal{H}_B$
& 1.30079
& 0.19713
& 0.0
& 0.00603
& 0.0
& 0.00162
\\
\bottomrule
\end{tabular}
\end{table}

\subsection{Dynamical Graph Surrogacy}

We next test the graph-surrogacy prediction.

Given the hypergraph $\mathcal{H}$ described in Table~\ref{tab:H3}, we construct an exposure-preserving graph surrogate $\mathcal{G}$ by replacing each hyperedge
$
e \coloneqq \{j_1,\dots,j_k\} \rightarrow i
$
of weight $w(e)$ by the  edges
$
 j_m \rightarrow i 
$
of weight $\frac{w(e)}{k}$ for each $m=1,\dots,k$. This construction preserves the exposure operator exactly, and therefore preserves the linearized dynamics.

We compare the projected amplitudes
\begin{align}
    \begin{split}
        A_\mathcal{H}(t) &=
w^\top \left(X_\mathcal{H}(t)-\bar{X}\right) \\
A_\mathcal{G}(t) &=
w^\top \left(X_\mathcal{G}(t)-\bar{X} \right)
    \end{split}
\end{align}

Two activation regimes are considered.

\paragraph{Case 1: Vanishing quadratic packing effect}

When $\sigma''(\bar x)=0$, the quadratic packing effect vanishes (see Remark~\ref{rem:curvature-free-activation-and-quad-surrogacy}).  The graph surrogate reproduces the hypergraph dynamics to quadratic order near onset. Numerically, the trajectories remain close, and since  cubic packing effect is also small, the amplitudes nearly coincide. Figure \ref{fig:graph_surrogacy} (Left panel ) illustrates this behavior

\paragraph{Case 2: Nonzero quadratic packing effect}

When $\sigma''(\bar x)\neq0$, quadratic packing effects become active. In this regime, the exposure-preserving graph surrogate no longer reproduces the hypergraph dynamics. Although the two systems initially agree due to their identical linearization, they separate at nonlinear order and saturate at different amplitudes.

Figure~\ref{fig:graph_surrogacy} (Right panel) illustrates this behavior. The result demonstrates that packing effects provide an obstruction to graph surrogacy, in the sense that preserving exposure is sufficient to reproduce linear onset, but it is  insufficient to reproduce nonlinear hypergraph dynamics.

Table~\ref{tab:graph_surrogacy} summarizes the graph-surrogacy experiments, including the common instability threshold, the packing effects $\beta_{\mathrm{pack}}$ and $\gamma_{\mathrm{pack}}$, the final saturated amplitudes, and the saturation discrepancy
\begin{equation}
    \Delta A_{\mathrm{sat}}
=
|A_{\mathcal{H},\mathrm{sat}}-A_{\mathcal{G},\mathrm{sat}}|.
\end{equation}

\begin{figure}[H]
\centering
\includegraphics[width=1\textwidth]{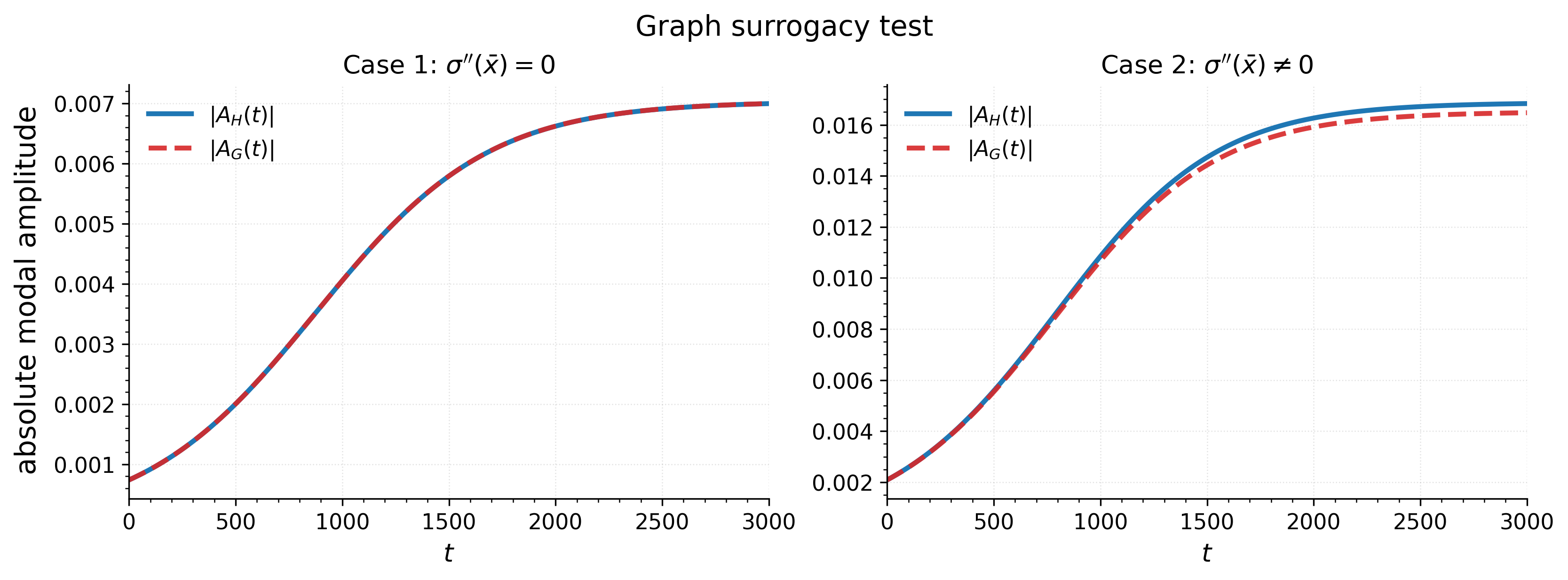}
\caption[Dynamical graph surrogacy test]{
Graph-surrogacy test for a hypergraph $\mathcal{H}$ and its exposure-preserving graph surrogate $\mathcal{G}$.
\textbf{Left:}
Activation regime satisfying
$\sigma''(\bar{x})=0$.
In this regime, the quadratic packing effect vanishes, suppressing the leading nonlinear obstruction to graph surrogacy. The projected amplitudes $A_\mathcal{H}(t)$ and $A_\mathcal{G}(t)$ remain very close throughout the evolution, with only small deviations attributable to higher-order packing effects.
\textbf{Right:}
Activation regime satisfying
$\sigma''(\bar{x})\neq 0$.
Quadratic packing effects are active, and the hypergraph and graph surrogate exhibit visibly different nonlinear dynamics despite identical linear onset. The systems initially evolve similarly but saturate at different amplitudes, demonstrating that exposure-equivalence alone is insufficient to guarantee nonlinear equivalence.
}
\label{fig:graph_surrogacy}
\end{figure}

\begin{table}[H]
\centering
\caption{
Summary of the graph-surrogacy experiments.
For each activation regime, the table reports the packing effects and the resulting saturated amplitudes for the hypergraph $\mathcal{H}$ and its exposure-preserving graph surrogate $\mathcal{G}$. Agreement is recovered when the relevant packing effects vanish, while nonzero packing effects produce observable nonlinear discrepancies despite identical linear dynamics. See Figure~\ref{fig:graph_surrogacy} for the corresponding visual comparison where we set $\varepsilon = 0.001$.
}
\label{tab:graph_surrogacy}
\renewcommand{\arraystretch}{1.15}
\begin{tabular}{c c c c c c c c c}
\toprule

Case
& $d_v^c$
& $\beta_{\mathrm{pack},\mathcal{H}}$
& $\beta_{\mathrm{pack},\mathcal{G}}$
& $\gamma_{\mathrm{pack},\mathcal{H}}$
& $\gamma_{\mathrm{pack},\mathcal{G}}$
& $A_{\mathcal{H},\mathrm{sat}}$
& $A_{\mathcal{G},\mathrm{sat}}$
& $\Delta A_{\mathrm{sat}}$
\\

\midrule

$\sigma''(\bar x)=0$
& 1.11
& 0.0
& 0.0
& 2.29e-4
& 0.0
& 6.82e-4
& 6.82e-4
& 7.68e-9
\\

$\sigma''(\bar x)\neq 0$
& 1.11
& 7.86e-4
& 0.0
& 1.10e-3
& 0.0
& 1.80e-3
& 1.80e-3
& 6.41e-5
\\
\bottomrule
\end{tabular}
\end{table}

\subsection{Reduced Dynamics versus the Full System}

As an additional validation of the weakly nonlinear reduction, we compare the full projected amplitude $A_{\mathrm{num}}(t)$ with the solution of the reduced amplitude equation
\begin{equation*}
    \dot A= \mu A+\beta A^2+\gamma A^3,
\end{equation*}

A representative comparison is shown in  Figure~\ref{fig:appendix_reduction_validation}. The close agreement near onset confirms that the reduced model accurately captures both the transient evolution and the nonlinear saturation of the critical mode in the weakly nonlinear regime.

\section{Discussion and outlook}\label{sec:section7}

The main contribution of this work is a theory of structural visibility for dynamical systems on  directed hypergraphs through the lens of pattern formation. Although hypergraphs encode complex higher-order interactions, our results show that the dynamics does not access all structural information equally. Instead, different asymptotic orders reveal different  features of  hypergraph organization.

At linear level, the dynamics depends only on \emph{exposure} information. The exposure reduction theorem shows that the linearized operator probes  only exposure statistics, implying that higher-order co-occurrence information is invisible to instability onset. Consequently, hypergraphs with substantially different higher-order structure may nevertheless exhibit identical linear behavior.
Beyond onset, nonlinear saturation depends on higher-order co-occurrence structure encoded by packing tensors. 
The reduced dynamics does not depend directly on these raw tensors themselves, but on \emph{packing effects}  obtained through contraction with the critical left and right eigenvectors. The nonlinear structural decomposition theorem splits the reduced amplitude coefficients into exposure and packing effects. The visibility hierarchy then identifies which tail moments enter each asymptotic order. This leads to the notion of nonlinear distinguishability, characterizing when  linearly indistinguishable higher-order systems may exhibit different saturated amplitudes, branch selection and pattern morphology. Thus,  nonlinear saturation acts as a structural probe capable of detecting information invisible to linearization. This yields a hierarchy of structural visibility in which \emph{exposure} information determines onset, and \emph{packing effects} determine saturation and pattern selection.

A central implication is that higher-order interactions need not be dynamically visible. 
Although a hypergraph may contain substantial higher-order structure, the reduced dynamics depend only on  packing effects. Consequently, higher-order observability is projection-dependent rather than purely structural. 

The mechanism also reveals two distinct sources of dynamical invisibility. The first is \emph{structural invisibility}, which arises when the relevant packing effects vanish because of the organization of the hypergraph itself. The second is \emph{functional invisibility}, which arises when the local curvature of the activation interaction  nonlinearity suppresses higher-order effects independently of topology. In particular, if
$ D^2\sigma(\bar{x})=0, $
then the quadratic packing effect vanishes even in the presence of genuine higher-order interactions.

These results therefore clarify the precise sense in which clique expansion succeeds or fails for higher-order dynamical systems. Clique expansion is exact at linear order  because linearization probes only first tail-moment statistics, namely exposure  information.  This leads naturally to the notion of \emph{dynamical graph surrogacy} which formalizes the fact that graph reductions succeed near onset precisely when the relevant packing effects vanish.

The directed nature of the hypergraphs also plays an essential role. The linearized dynamics is  governed by generally non-symmetric exposure operators, requiring a  decomposition into left and right eigenvectors. In the weakly nonlinear regime, the adjoint eigenvector determines how nonlinear forcing projects onto the unstable mode, so directedness enters directly into the saturation mechanism through contractions with the packing tensors.

Another notable feature of the theory is the emergence of two distinct normal-form regimes. In the generic resonant case $ \beta\neq0, $
saturation occurs already at quadratic order, whereas the nonresonant case $ \beta=0 $
recovers the classical cubic Stuart--Landau equation. Thus, hypergraph-driven amplitude dynamics generically produces quadratic mode interactions unless suppressed by symmetry or projection.

Several future directions remain open. The present analysis focuses on codimension-one steady bifurcations and scalar amplitude equations. Extending the visibility framework to Hopf bifurcations, multimode interactions, and strongly nonlinear regimes may reveal additional structural observables and richer notions of distinguishability. Another promising direction is the inverse problem of structural inference. More specifically,  given nonlinear observations of a dynamical process, what aspects of the underlying higher-order structure can be reconstructed? Since  exposure-equivalent hypergraphs
are indistinguishable at linear onset, recovering higher-order structure
may require nonlinear transient or saturation data. Lastly,  control of higher-order dynamics. Since nonlinear saturation
depends on packing effects, these quantities may provide
natural targets for influencing post-onset behavior.

\vspace{2em}

\textbf{Acknowledgements:} This work was partially funded by NSF grant  DMS-2103026, and AFOSR grants FA
9550-22-1-0215 and FA 9550-23-1-0400


\appendix

\renewcommand{\thesection}{\Alph{section}}
\renewcommand{\theequation}{\thesection.\arabic{equation}}
\numberwithin{equation}{section}
\renewcommand{\thefigure}{\Alph{section}.\arabic{figure}}
\numberwithin{figure}{section}
\numberwithin{table}{section}

\section{Aggregator-Mixer Coupling Examples}
\label{app:aggr-mixer}

\subsection{Multilinear Coupling}

A particularly simple and analytically tractable subclass of the aggregator--mixer framework is obtained by choosing linear aggregation functions. In this setting, the aggregators $\psi_k$ are taken to be the arithmetic mean of the tail states:
\begin{equation}
\psi_k(x_1,\dots,x_k)
=
\frac{1}{k}\sum_{\ell=1}^{k}x_\ell
\end{equation}

Choosing the mixer to be the identity map, $\Theta_k(x)=x$,  the resulting coupling function becomes
\begin{equation}\label{linear-coupling}
\phi_k(x_{j_1},\dots,x_{j_k},x_i) =
M_k
\left(
\frac{\beta_k}{k}
\sum_{\ell=1}^{k}x_{j_\ell}
+
\alpha_k x_i
\right)
\end{equation}

This coupling structure generalizes classical linear diffusive interactions on graphs to directed hypergraph settings. In particular, the interaction term depends linearly on the average state of the tail nodes together with a linear contribution from the head node. Such couplings provide a natural higher-order extension of consensus, diffusion, and linear network interaction models.

\subsection{Polynomial Coupling}

An important subclass of the aggregator--mixer framework is obtained by choosing polynomial aggregation functions. In this setting, the $i$-th canonical projection of the aggregator $\psi_k$ is taken to be a monomial of degree at least $k$. More precisely, letting $\{\mathbf e_i\}_{1\le i\le d}$ denote the canonical basis of $\mathbb R^d$, we assume
\begin{equation}
\mathbf e_i^{T}\psi_k(x_1,\dots,x_k)
=
\prod_{\ell=1}^{k}
x_{\ell,i}^{\,m_\ell},
\qquad
\sum_{\ell=1}^{k}m_\ell \ge k,
\qquad
m_\ell \ge 1,
\end{equation}
where $x_{\ell,i}$ denotes the $i$-th component of $x_\ell$

Equivalently, the aggregation function may be expressed using the Hadamard product:
\begin{equation}
\psi_k(x_1,\dots,x_k)
=
\odot_{\ell=1}^{k}
\odot_{\nu=1}^{m_\ell}
x_\ell 
\end{equation}

Choosing the mixer function to be the identity,
$
\Theta_k=\mathrm{Id}
$
the coupling function reduces to the polynomial interaction form
\begin{equation}\label{polynomial-coupling}
\phi_k(x_{j_1},\dots,x_{j_k},x_i)
=
M_k
\left(
\beta_k
\odot_{\ell=1}^{k}
\odot_{\nu=1}^{m_\ell}
x_{j_\ell}
+
\alpha_k
\odot_{\ell=1}^{k}
\odot_{\nu=1}^{m_\ell}
x_i
\right)
\end{equation}

\subsection{Homogeneous Polynomial Coupling}

In  the \emph{Polynomial Coupling} above, the aggregators $\psi_k$, and consequently the coupling functions $\phi_k$ need not have the same degree for all $k\in \{1,\cdots,\rho-1\}$. This leads in general  to \emph{non-homogeneous} polynomial coupling term in \eqref{eq:hypergraph-dynamics}. However, if  instead one seeks \emph{homogeneous polynomial} coupling structure, then additional constraints must be imposed on the aggregators $\psi_k$. More specifically,  all aggregators, and therefore all the coupling functions $\phi_k$ must be chosen to have \emph{the same degree} $d$. Note that the  degree must be \emph{at least} $\rho-1$, namely $d \ge \rho-1$, where $\rho = r(\mathcal{H})$ is the \emph{rank} of the hypergraph. Table \ref{homogeneous-polynomial-coupling-table} shows an example of the \emph{multiindice} exponents $(m_1,\cdots,m_k)$ to realize a homogeneous polynomial coupling in the dynamics \eqref{eq:hypergraph-dynamics}.
This class of coupling includes multilinear tensor interactions in \cite{chen2021controllability} as a particular case, and it provides a natural higher-order generalization of homogeneous polynomial coupling mechanisms commonly used in tensor-based dynamical systems.

\begin{table}[H]
\centering

\caption{
Example of multi-index exponents
$(m_1,\dots,m_k)\in\mathbb{N}^k$,
where $\mathbb{N}=\{1,2,\dots\}$ and
$1\le k\le \rho-1$.
These choices yield homogeneous polynomial coupling terms in the dynamics
\eqref{eq:hypergraph-dynamics}.
The minimum degree of the aggregators $\psi_k$ is $\rho-1$, where $\rho$ denotes the rank of the hypergraph.
Multiplication is interpreted componentwise via the Hadamard product since the variables $x_i$ may be high-dimensional.
}

\label{homogeneous-polynomial-coupling-table}
\renewcommand{\arraystretch}{1.2}
\setlength{\tabcolsep}{8pt}
\begin{tabular}{l c c c}
\toprule

 $\psi_k$
& $\deg(\psi_k)=\rho-1$
& $\deg(\psi_k)=\rho$
& {$\dots$}

\\

\midrule

$\psi_1(x_1)$
& {$x_1^{\rho-1}$}
& {$x_1^{\rho}$}
& {$\dots$}

\\

$\psi_2(x_1,x_2)$
& {$x_1^{\rho-2}x_2$}
& {$x_1^{\rho-1}x_2$}
& {$\dots$}

\\

$\vdots$
& {$\vdots$}
& {$\vdots$}
& {$\vdots$}

\\

$\psi_{\rho-1}(x_1,\dots,x_{\rho-1})$
& {$x_1x_2\cdots x_{\rho-1}$}
& {$x_1^2x_2\cdots x_{\rho-1}$}
& {$\dots$}
\\
\bottomrule
\end{tabular}
\end{table}

\subsection{Trigonometric Coupling}\label{app:trigonometric-coupling}

Another important subclass of the aggregator-mixer framework is obtained by combining multilinear aggregation with trigonometric mixing functions. In this setting, the aggregation function $\psi_k$ is chosen to be multilinear, while the canonical projections of the mixer $\Theta_k$ are taken to be trigonometric functions such as \emph{sine} or \emph{cosine}.
A particularly relevant example arises when choosing the aggregation function to be the summation
\begin{equation}
\psi_k(x_1,\dots,x_k)
=
\sum_{\ell=1}^{k}x_\ell
\end{equation}
and selecting the mixer so that its canonical projections are given by the sine function,
\begin{equation}
\mathbf e_i^{T}\Theta_k(x)
=
\sin(x_i),
\qquad
1\le i\le d.
\end{equation}
This yields a higher-order trigonometric coupling structure generalizing classical Kuramoto-type dynamics.
To illustrate this connection, suppose for simplicity that $\mathcal H$ is a $3$-uniform directed B-hypergraph and that the node dynamics are scalar-valued. The corresponding order-$2$ coupling function  on the B-arc $e= \left( \left\{ j_1,j_2 \right\} \to i \right)$ takes the form
\begin{equation}
\phi_2(x_{j_1},x_{j_2},x_i)
=
\alpha_2
\sin\!\big(
x_{j_1}+x_{j_2}-2x_i
\big)
\end{equation}

This expression is a direct higher-order extension of the classical Kuramoto interaction term, with pairwise phase differences replaced by collective phase interactions involving multiple tail nodes. The normalization factor is absorbed into the adjacency tensor
$
\mathsf A^{(2)}_{i j_1 j_2}.
$
 Consequently, the framework naturally incorporates higher-order Kuramoto-type models on directed hypergraphs within the general aggregator-mixer architecture.

\section{Additional  Derivations}
\label{app:add-derivations}

\subsection{Proof of Lemma~\ref{lem:symmetry-and-contraction-invariance}}\label{app:der-lem-sym-and-contraction-invariance}

\begin{proof}
Fix
$i\in\{1,\dots,n_0\}$ and  $j\in\{1,\dots,n\}$.
For each $\ell\in\{1,\dots,k\} $, we define the following  index set
$\mathcal I_{\ell}(j):=\left\{(j_1,\dots,j_k)\in\{1,\dots,n\}^k
\;\middle|\;
j_\ell=j
\right\}.
$
Then by definition of the contraction we have, $
\mathsf{T}^{\hat{\ell}}_{ij} = \sum_{(j_1,\dots,j_k)\in\mathcal I_{\ell}(j)} \mathsf{T}_{i j_1\dots j_k}$.
Now let $\ell,\ell'\in\{1,\dots,k\}$ , and consider the permutation $ \pi_{\ell\leftrightarrow\ell'}\in S_k $
that swaps the positions $\ell$ and $\ell'$. This permutation induces a bijection between the sets  $\mathcal I_{\ell}(j)$ and $\mathcal I_{\ell'}(j)$ since fixing the value $j$ in the $\ell$-th position is equivalent after permutation to fixing $j$ in the $\ell'$-th position.
By symmetry of the tensor $\mathsf{T}$ in its last $k$ indices, we have
$ \mathsf{T}_{i j_1\dots j_k} = \mathsf{T}_{i j_{\pi(1)}\dots j_{\pi(k)}} $
for every permutation $\tau\in S_k$. In particular, for the permutation $\pi$ above, corresponding terms under the above bijection have identical tensorial value, namely,  $ \mathsf{T}^{\hat{\ell}}_{ij}= \mathsf{T}^{\hat{\ell}'}_{ij}$.
Since the equality holds for all $i\in\{1,\dots,n_0\}$ and  $j\in\{1,\dots,n\}$,  the result follows.
\end{proof}

\subsection{Proof of Proposition~\ref{prop:pth-tail-moment-as-tensor-contraction}}\label{app:der-prop-p-tail-moments-as-tensor-contraction}

\begin{proof}
    Let $j_1,\cdots, j_p$ be fixed tail indices, and consider a hyperedge $e = (T\to i) \in E_k$ such that $j_1,\cdots, j_p \in T$. Using the definition of the adjacency tensor \eqref{eq:adjacency-tensor-def}, the contribution of $e$ to the contraction $\left\langle
\mathsf{A}^{(k)},
\mathsf{1}_{k-p}
\right\rangle_{\{p+2,\dots,k+1\}}$ is $ (k-p)!\,\frac{w(e)}{k!} = \frac{w(e)}{(k)_p}$. Thus multiplying by $(k)_p/k^p$ and summing over all hyperedges containing $j_1,\cdots,j_p$ yields the result. Note that the restriction to distinct tail indices isolates the genuinely $p$-way interactions of $\mathsf T^{(k,p)}$, since repeated-index entries reduce to lower-order moments.
\end{proof}

\subsection{Proof of Proposition~\ref{prop:center-manifold-reduction-n-amplitude-eqn}}\label{app:der-prop-center-manifold-reduction}

\begin{proof}
Let $y=\delta \mathbf{x}$ and consider the following cubic order Taylor expansion of the vector field near the bifurcation
\begin{align}\label{app:vector-field-expansion-center-manifold}
\dot y
=
J_0y+\epsilon J_1y+B(y,y)+C(y,y,y)
+
O(\|y\|^4+|\epsilon|\|y\|^2+\epsilon^2\|y\|),
\end{align}
where $B$ and $C$ are symmetric bilinear and trilinear maps. Since
$J_0$ has a simple zero eigenvalue with right eigenvector $v$ and left
eigenvector $w$ satisfying $w^\top v=1$, with all remaining
eigenvalues have negative real part, the center manifold theorem, guarantees the existence of a one-dimensional locally
invariant center manifold locally tangent to $\operatorname{span}\{v\}$ at the
origin. Let's  parametrize it as follows
\begin{align}\label{app:center-manifold-eq}
y=Av+h(A,\epsilon),
\end{align}
where $A\in\mathbb R$ and $h(A,\epsilon)$ takes values in the stable
complement of $\operatorname{span}\{v\}$. Let $Q=I-vw^\top $
denote the projection onto this stable complement. We have the following expansion of the correction term owing to the tangency of the center manifold to  $\operatorname{span}\{v\}$

\begin{align}\label{app:correction-term-center-manifold-eq}
h(A,\epsilon)=A^2u+O(|A|^3+|\epsilon||A|^2)
\end{align}
for some $u\in\operatorname{Range}(Q)$. The amplitude is defined using the adjoint vector $w$ such that $A = w^\top y$. Since the correction term is transverse to the critical subspace, namely $w^\top h(A,\epsilon)$, differentiating the projection of \eqref{app:center-manifold-eq} yields $
\dot A=w^\top \dot y$.
Substituting the expansion of \eqref{app:center-manifold-eq} into  \eqref{app:vector-field-expansion-center-manifold} and keeping only terms
up to cubic order, we obtain
\begin{align}\label{app:expansion-A-dot-center-manifold}
\dot A = w^\top\Big[
J_0(Av+A^2u)
+\epsilon J_1(Av)
+B(Av+A^2u,Av+A^2u)
+C(Av,Av,Av)
\Big]
+\text{h.o.t.}
\end{align}
Since  $J_0v=0$ and $w^\top J_0=0$, we obtain
\begin{align}
\dot A = \epsilon\, w^\top J_1v\, A
+
w^\top B(v,v)\,A^2
+
w^\top\Big(2B(v,u)+C(v,v,v)\Big)\,A^3
+\text{h.o.t.}
\end{align}
Hence
\begin{align}
\alpha=w^\top J_1v,\qquad
\beta=w^\top B(v,v),\qquad
\gamma=w^\top\Big(2B(v,u)+C(v,v,v)\Big)
\end{align}

It now remains to determine  $u$. In the one hand, substituting the expansion of the ansatz \eqref{app:center-manifold-eq} into \eqref{app:vector-field-expansion-center-manifold} then applying the stable projection $Q$ gives $J_0u+QB(v,v)$ at order $A^2$. In the other hand, differentiating the expansion of the ansatz \eqref{app:center-manifold-eq}, then applying the stable projection Q yields $Q\dot{y}= 2A\dot{A}u + O(A^2\dot{A}) = O(A^3)$ since $\dot{A} = O(A^2)$ at the bifurcation point. Thus the center manifold invariance
condition requires the quadratic component $J_0u+QB(v,v)$ to vanish.
Since $u\in\operatorname{Range}(Q)$, the center-manifold invariance
Since $u\in\operatorname{Range}(Q)$, the center-manifold invariance
condition requires this component  to vanish. Since the zero eigenvalue of $J_0$ is simple, its restriction to the stable subspace $\operatorname{Range}(Q)$ is invertible. Thus the equation has a unique solution.
\end{proof}

\section{Numerical Validation}
\label{app:num-val}

In this appendix section, we present the benchmark hypergraphs used throughout the numerical experiments in Chapter~\ref{sec:section6}, and we compare the dynamics of the full hypergraph system with that predicted by the reduced amplitude equation. These findings validate the accuracy of the weakly nonlinear reduction near the instability onset.

\subsection{Hypergraph Benchmark Sets}\label{app:num-val:benchmark-pairs}

The numerical experiments are performed on a set of small directed weighted hypergraphs selected to demonstrate the main theoretical phenomena. Hypergraphs $\mathcal{H}_A$ and $\mathcal{H}_B$ form an exposure-equivalent pair, and they are used to illustrate nonlinear distinguishability despite identical linear onset behavior. The hypergraph $\mathcal{H}$ serves as a representative benchmark for validating dynamical graph surrogacy and assessing  the accuracy of the reduced dynamics against direct numerical simulations of the full system.

\begin{table}[H]
\centering
\begin{minipage}{0.47\textwidth}
\centering
\[
V=\{1,2,3,4,5,6\}
\]
\begin{tabular}{c c c}
\hline
Hyperedge & Directed relation & Weight \\
\hline
$e_1$ & $\{3,4\}\to1$ & $2.00$ \\
$e_2$ & $\{5\}\to1$ & $0.60$ \\
$e_3$ & $\{5\}\to2$ & $0.80$ \\
$e_4$ & $\{6\}\to2$ & $0.80$ \\
$e_5$ & $\{3\}\to2$ & $0.40$ \\
$e_6$ & $\{1\}\to3$ & $0.50$ \\
$e_7$ & $\{2\}\to4$ & $0.50$ \\
\hline
\end{tabular}
\caption*{(a) Hypergraph $\mathcal{H}_A$}
\end{minipage}
\hfill
\begin{minipage}{0.47\textwidth}
\centering
\[
V=\{1,2,3,4,5,6\}
\]
\begin{tabular}{c c c}
\hline
Hyperedge & Directed relation & Weight \\
\hline
$e_1$ & $\{3\}\to1$ & $1.00$ \\
$e_2$ & $\{4\}\to1$ & $1.00$ \\
$e_3$ & $\{5\}\to1$ & $0.60$ \\
$e_4$ & $\{5,6\}\to2$ & $1.60$ \\
$e_5$ & $\{3\}\to2$ & $0.40$ \\
$e_6$ & $\{1\}\to3$ & $0.50$ \\
$e_7$ & $\{2\}\to4$ & $0.50$ \\
\hline
\end{tabular}
\caption*{(b) Hypergraph $\mathcal{H}_B$}
\end{minipage}
\caption{Pair  of directed weighted hypergraphs $\mathcal{H}_A$ and $\mathcal{H}_B$ used in Figure~\ref{fig:pairI_distinguishability}}
\label{tab:HA_HB}
\end{table}

\begin{table}[H]
\centering
\[
V=\{1,2,3,4,5,6\}
\]
\begin{tabular}{c c c}
\hline
Hyperedge & Directed relation & Weight \\
\hline
$e_1$ & $\{3,4\}\to1$ & $1.01$ \\
$e_2$ & $\{5,6\}\to1$ & $1.00$ \\
$e_3$ & $\{3,6\}\to2$ & $0.80$ \\
$e_4$ & $\{3\}\to5$ & $1.50$ \\
$e_5$ & $\{5\}\to1$ & $0.60$ \\
$e_6$ & $\{1\}\to3$ & $0.50$ \\
$e_7$ & $\{5\}\to2$ & $0.75$ \\
$e_8$ & $\{1,2,5\}\to4$ & $0.50$ \\
$e_9$ & $\{4,5,6\}\to1$ & $0.50$ \\
\hline
\end{tabular}
\caption{Directed weighted hypergraph $\mathcal{H}$ used in Figure~\ref{fig:graph_surrogacy} and Figure~\ref{fig:appendix_reduction_validation}.}
\label{tab:H3}
\end{table}

\subsection{Reduced Dynamics Versus  Full System}\label{app:num-val_red-dyn-vs-full-sys}

To assess the accuracy of the weakly nonlinear reduction, we compare the amplitude predicted by the reduced dynamics \eqref{eq:amplitude-equation} with the projection of the full system (\eqref{eq:hypergraph-dynamics} + \eqref{eq:deepSet-coupling-function}). Simulations are performed for parameter values close to the instability threshold, where the assumptions underlying the reduction are expected to hold. The comparison focuses on both the transient growth phase and the eventual saturation amplitude.

\begin{figure}[H]
\centering
\includegraphics[width=.95\textwidth]{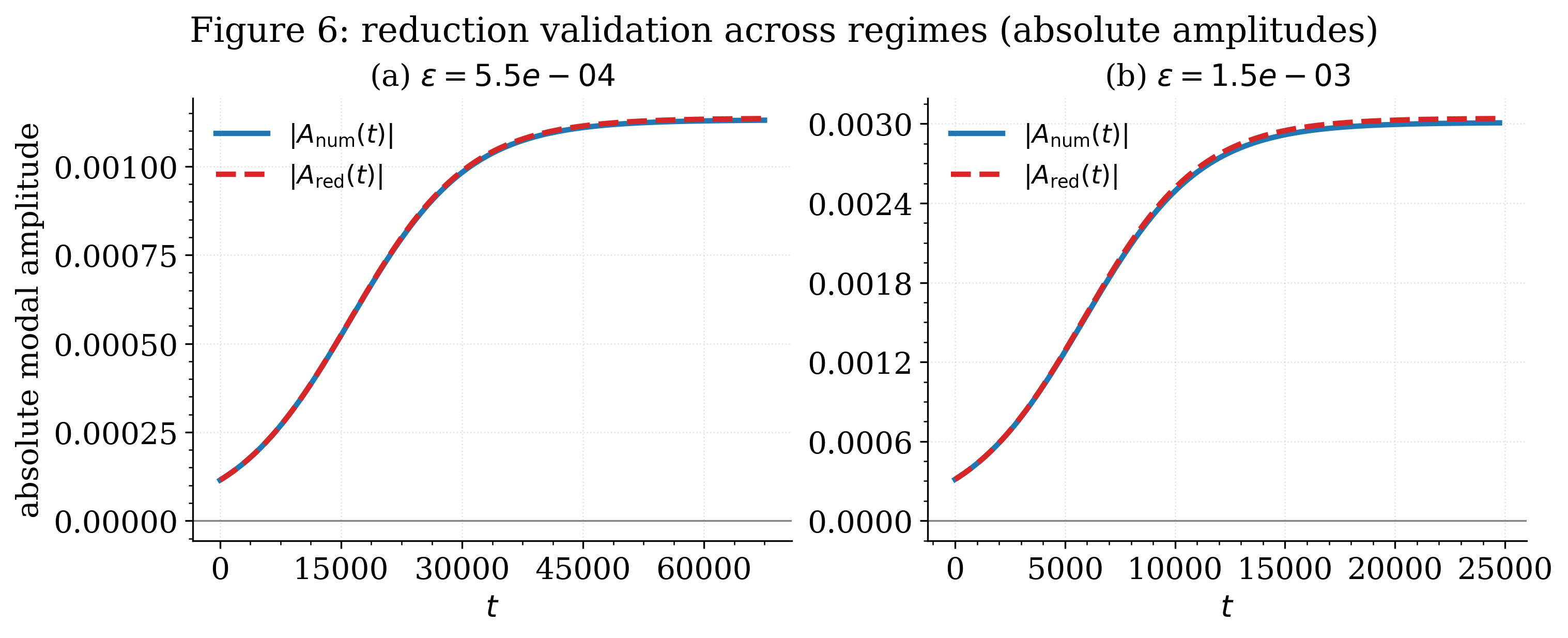}
\caption[Reduction validation across regimes]{
Reduction validation across regimes.
The projected amplitude
$A_{\mathrm{num}}(t)$
from the full hypergraph system is compared with the solution
$A_{\mathrm{red}}(t)$
of the reduced normal form, initialized with
$A_{\mathrm{red}}(0)=A_{\mathrm{num}}(0)$.
\textbf{Left:}
Near-onset regime with
$\varepsilon=5.5\times 10^{-4}$.
The reduced equation accurately reproduces both the transient growth and the saturation amplitude over long timescales, consistent with the validity of the weakly nonlinear reduction close to bifurcation.
\textbf{Right:}
Moderately larger parameter offset with
$\varepsilon=1.5\times 10^{-3}$.
Although the dynamics move farther from the asymptotic weakly nonlinear regime, the reduced model still captures the qualitative evolution and provides a quantitatively accurate approximation of the saturation dynamics.
In both cases, the close agreement confirms that the reduced amplitude equation captures the dominant critical-mode dynamics governing pattern formation near onset.
}
\label{fig:appendix_reduction_validation}
\end{figure}


\bibliographystyle{unsrt}
\bibliography{references}
\end{document}

%% file: preamble.tex
\usepackage{amsmath,amssymb,amsfonts,amsthm,mathtools}
\usepackage{bbm,dsfont}

\usepackage{graphicx}
\usepackage{subcaption}
\usepackage{booktabs}
\usepackage{array}
\usepackage{float}
\usepackage[skip=3pt]{caption}

\usepackage[dvipsnames]{xcolor}

\usepackage{tikz}
\usetikzlibrary{calc,topaths}
\usetikzlibrary{positioning}
\usetikzlibrary{arrows.meta}

\usepackage{pgfplots}
\pgfplotsset{compat=1.18}

\usepackage{enumitem}

\usepackage[linesnumbered,ruled,vlined]{algorithm2e}
\SetKwInput{KwInput}{Input}
\SetKwInput{KwOutput}{Output}

\usepackage{hyperref}

\hypersetup{
    colorlinks=true,
    linkcolor=blue,
    citecolor=blue,
    urlcolor=cyan
}

\usepackage[nameinlink,noabbrev]{cleveref}

\newtheorem{theorem}{Theorem}[section]
\newtheorem{proposition}[theorem]{Proposition}
\newtheorem{lemma}[theorem]{Lemma}
\newtheorem{corollary}[theorem]{Corollary}

\theoremstyle{definition}
\newtheorem{definition}[theorem]{Definition}
\newtheorem{example}[theorem]{Example}

\theoremstyle{remark}
\newtheorem{remark}[theorem]{Remark}

\newtheoremstyle{named}
  {\topsep}
  {\topsep}
  {\normalfont}
  {}
  {\bfseries}
  {.}
  { }
  {\thmname{#1}\thmnumber{ #2}\thmnote{ (#3)}}

\theoremstyle{named}


%% file: references.bib
@article{battiston2020networks,
  title={Networks beyond pairwise interactions: Structure and dynamics},
  author={Battiston, Federico and Cencetti, Giulia and Iacopini, Iacopo and Latora, Vito and Lucas, Maxime and Patania, Alice and Young, Jean-Gabriel and Petri, Giovanni},
  journal={Physics reports},
  volume={874},
  pages={1--92},
  year={2020},
  doi = {10.1016/j.physrep.2020.05.004},
  publisher={Elsevier}
}

@article{battiston2021physics,
  title={The physics of higher-order interactions in complex systems},
  author={Battiston, Federico and Amico, Enrico and Barrat, Alain and Bianconi, Ginestra and Ferraz de Arruda, Guilherme and Franceschiello, Benedetta and Iacopini, Iacopo and K{\'e}fi, Sonia and Latora, Vito and Moreno, Yamir and others},
  journal={Nature physics},
  volume={17},
  number={10},
  pages={1093--1098},
  year={2021},
  doi={10.1038/s41567-021-01371-4},
  publisher={Nature Publishing Group UK London}
}

@article{klamt2009hypergraphs,
  title={Hypergraphs and cellular networks},
  author={Klamt, Steffen and Haus, Utz-Uwe and Theis, Fabian},
  journal={PLoS computational biology},
  volume={5},
  number={5},
  pages={e1000385},
  year={2009},
  doi={10.1371/journal.pcbi.1000385},
  publisher={Public Library of Science San Francisco, USA}
}

@article{iacopini2019simplicial,
  title={Simplicial models of social contagion},
  author={Iacopini, Iacopo and Petri, Giovanni and Barrat, Alain and Latora, Vito},
  journal={Nature communications},
  volume={10},
  number={1},
  pages={2485},
  year={2019},
  doi={10.1038/s41467-019-10431-6},
  publisher={Nature Publishing Group UK London}
}

@article{bairey2016high,
  title={High-order species interactions shape ecosystem diversity},
  author={Bairey, Eyal and Kelsic, Eric D and Kishony, Roy},
  journal={Nature communications},
  volume={7},
  number={1},
  pages={12285},
  year={2016},
  doi={10.1038/ncomms12285},
  publisher={Nature Publishing Group UK London}
}

@book{centola2018behavior,
  title={How behavior spreads: The science of complex contagions},
  author={Centola, Damon},
  volume={3},
  year={2018},
  doi={10.2307/j.ctvc7758p},
  publisher={Princeton University Press Princeton, NJ}
}

@book{bianconi2021higher,
  title={Higher-order networks},
  author={Bianconi, Ginestra},
  year={2021},
  doi={10.1017/9781108770996},
  publisher={Cambridge University Press}
}

@article{lucas2020multiorder,
  title={Multiorder Laplacian for synchronization in higher-order networks},
  author={Lucas, Maxime and Cencetti, Giulia and Battiston, Federico},
  journal={Physical Review Research},
  volume={2},
  number={3},
  pages={033410},
  year={2020},
  doi={10.1103/PhysRevResearch.2.033410},
  publisher={APS}
}

@article{schaub2020random,
  title={Random walks on simplicial complexes and the normalized Hodge 1-Laplacian},
  author={Schaub, Michael T and Benson, Austin R and Horn, Paul and Lippner, Gabor and Jadbabaie, Ali},
  journal={SIAM Review},
  volume={62},
  number={2},
  pages={353--391},
  year={2020},
  doi={10.1137/18M1201019},
  publisher={SIAM}
}

@article{millan2020explosive,
  title={Explosive higher-order Kuramoto dynamics on simplicial complexes},
  author={Mill{\'a}n, Ana P and Torres, Joaqu{\'\i}n J and Bianconi, Ginestra},
  journal={Physical Review Letters},
  volume={124},
  number={21},
  pages={218301},
  year={2020},
  doi={10.1103/PhysRevLett.124.218301},
  publisher={APS}
}

@inproceedings{turing1952chemical,
  title={The chemical basis of morphogenesis: Philosophical transactions of the roy al society of london. ser. b},
  author={Turing, AM},
  booktitle={Biol. Sci},
  volume={237},
  doi={10.1098/rstb.1952.0012},
  year={1952}
}

@article{nakao2010turing,
  title={Turing patterns in network-organized activator--inhibitor systems},
  author={Nakao, Hiroya and Mikhailov, Alexander S},
  journal={Nature Physics},
  volume={6},
  number={7},
  pages={544--550},
  year={2010},
  doi={10.1038/nphys1651},
  publisher={Nature Publishing Group UK London}
}

@article{carletti2020dynamical,
  title={Dynamical systems on hypergraphs},
  author={Carletti, Timoteo and Fanelli, Duccio and Nicoletti, Sara},
  journal={Journal of Physics: Complexity},
  volume={1},
  number={3},
  pages={035006},
  year={2020},
  doi={10.1088/2632-072X/aba8e1},
  publisher={IOP Publishing}
}

@article{carletti2020random,
  title={Random walks on hypergraphs},
  author={Carletti, Timoteo and Battiston, Federico and Cencetti, Giulia and Fanelli, Duccio},
  journal={Physical review E},
  volume={101},
  number={2},
  pages={022308},
  year={2020},
  doi={10.1103/PhysRevE.101.022308},
  publisher={APS}
}

@book{murray2003mathematical,
  title={Mathematical biology: II: spatial models and biomedical applications},
  author={Murray, James Dickson and Murray, James Dickson},
  volume={18},
  year={2003},
  doi={10.1007/b98869},
  publisher={Springer}
}

@article{cross1993pattern,
  title={Pattern formation outside of equilibrium},
  author={Cross, Mark C and Hohenberg, Pierre C},
  journal={Reviews of modern physics},
  volume={65},
  number={3},
  pages={851},
  year={1993},
  doi={10.1103/RevModPhys.65.851},
  publisher={APS}
}

@book{hoyle2006pattern,
  title={Pattern formation: an introduction to methods},
  author={Hoyle, Rebecca B},
  year={2006},
  doi={10.1017/CBO9780511616051},
  publisher={Cambridge University Press}
}

@article{maini2012turing,
  title={Turing's model for biological pattern formation and the robustness problem},
  author={Maini, Philip K and Woolley, Thomas E and Baker, Ruth E and Gaffney, Eamonn A and Lee, S Seirin},
  journal={Interface focus},
  volume={2},
  number={4},
  pages={487},
  doi={10.1098/rsfs.2011.0113},
  year={2012}
}

@article{zaheer2017deep,
  title={Deep sets},
  author={Zaheer, Manzil and Kottur, Satwik and Ravanbakhsh, Siamak and Poczos, Barnabas and Salakhutdinov, Russ R and Smola, Alexander J},
  journal={Advances in neural information processing systems},
  volume={30},
  year={2017}
}

@inproceedings{lee2019set,
  title={Set transformer: A framework for attention-based permutation-invariant neural networks},
  author={Lee, Juho and Lee, Yoonho and Kim, Jungtaek and Kosiorek, Adam and Choi, Seungjin and Teh, Yee Whye},
  booktitle={International conference on machine learning},
  pages={3744--3753},
  year={2019},
  organization={PMLR}
}

@article{gallo1993directed,
  title={Directed hypergraphs and applications},
  author={Gallo, Giorgio and Longo, Giustino and Pallottino, Stefano and Nguyen, Sang},
  journal={Discrete applied mathematics},
  volume={42},
  number={2-3},
  pages={177--201},
  year={1993},
  doi={10.1016/0166-218X(93)90045-P},
  publisher={Elsevier}
}

@inproceedings{mouyebe2025coupling,
  title={Coupling Induced Stabilization of Network Dynamical Systems and Switching},
  author={Mouyebe, Moise R and Bloch, Anthony M},
  booktitle={2025 IEEE 64th Conference on Decision and Control (CDC)},
  pages={8091--8097},
  year={2025},
  doi={10.1109/CDC57313.2025.11312567},
  organization={IEEE}
}

@article{olfati2004consensus,
  title={Consensus problems in networks of agents with switching topology and time-delays},
  author={Olfati-Saber, Reza and Murray, Richard M},
  journal={IEEE Transactions on automatic control},
  volume={49},
  number={9},
  pages={1520--1533},
  year={2004},
  doi={10.1109/TAC.2004.834113},
  publisher={IEEE}
}

@article{ren2005consensus,
  title={Consensus seeking in multiagent systems under dynamically changing interaction topologies},
  author={Ren, Wei and Beard, Randal W},
  journal={IEEE Transactions on automatic control},
  volume={50},
  number={5},
  pages={655--661},
  year={2005},
  doi={10.1109/TAC.2005.846556},
  publisher={IEEE}
}

@article{arenas2008synchronization,
  title={Synchronization in complex networks},
  author={Arenas, Alex and D{\'\i}az-Guilera, Albert and Kurths, Jurgen and Moreno, Yamir and Zhou, Changsong},
  journal={Physics reports},
  volume={469},
  number={3},
  pages={93--153},
  year={2008},
  doi={10.1016/j.physrep.2008.09.002},
  publisher={Elsevier}
}

@inproceedings{kuramoto2005self,
  title={Self-entrainment of a population of coupled non-linear oscillators},
  author={Kuramoto, Yoshiki},
  booktitle={International symposium on mathematical problems in theoretical physics: January 23--29, 1975, kyoto university, kyoto/Japan},
  pages={420--422},
  year={2005},
  doi={10.1007/bfb0013365},
  organization={Springer}
}

@article{acebron2005kuramoto,
  title={The Kuramoto model: A simple paradigm for synchronization phenomena},
  author={Acebr{\'o}n, Juan A and Bonilla, Luis L and P{\'e}rez Vicente, Conrad J and Ritort, F{\'e}lix and Spigler, Renato},
  journal={Reviews of modern physics},
  volume={77},
  number={1},
  pages={137--185},
  year={2005},
  doi={10.1103/RevModPhys.77.137},
  publisher={APS}
}

@article{strogatz2000kuramoto,
  title={From Kuramoto to Crawford: exploring the onset of synchronization in populations of coupled oscillators},
  author={Strogatz, Steven H},
  journal={Physica D: Nonlinear Phenomena},
  volume={143},
  number={1-4},
  pages={1--20},
  year={2000},
  doi={10.1016/S0167-2789(00)00094-4},
  publisher={Elsevier}
}

@article{chen2021controllability,
  title={Controllability of hypergraphs},
  author={Chen, Can and Surana, Amit and Bloch, Anthony M and Rajapakse, Indika},
  journal={IEEE Transactions on Network Science and Engineering},
  volume={8},
  number={2},
  pages={1646--1657},
  year={2021},
  doi={10.1109/TNSE.2021.3068203},
  publisher={IEEE}
}

@article{muolo2023turing,
  title={Turing patterns in systems with high-order interactions},
  author={Muolo, Riccardo and Gallo, Luca and Latora, Vito and Frasca, Mattia and Carletti, Timoteo},
  journal={Chaos, Solitons \& Fractals},
  volume={166},
  pages={112912},
  year={2023},
  doi={10.1016/j.chaos.2022.112912},
  publisher={Elsevier}
}

@article{kolda2009tensor,
  title={Tensor decompositions and applications},
  author={Kolda, Tamara G and Bader, Brett W},
  journal={SIAM review},
  volume={51},
  number={3},
  pages={455--500},
  year={2009},
  doi={10.1137/07070111X},
  publisher={SIAM}
}

@article{horn1945matrix,
  title={Matrix Analysis},
  author={Horn, Roger A and Johnson, Charles R},
  journal={American history},
  volume={1861},
  number={1900},
  doi={10.1017/CBO9781139020411},
  year={1945}
}

@book{golub2013matrix,
  title={Matrix computations},
  author={Golub, Gene H and Van Loan, Charles F},
  year={2013},
  doi={10.56021/9781421407944},
  publisher={JHU press}
}

@book{trefethen1997numerical,
  title={Numerical Linear Algebra},
  author={Trefethen, Lloyd N and Bau, III, David},
  year={1997},
  doi={10.1137/1.9780898719574},
  publisher={SIAM}
}

@book{carr2012applications,
  title={Applications of centre manifold theory},
  author={Carr, Jack},
  year={2012},
  doi={10.1007/978-1-4612-5929-9},
  publisher={Springer Science \& Business Media}
}

@book{kuznetsov1998elements,
  title={Elements of applied bifurcation theory},
  author={Kuznetsov, Yuri A},
  year={1998},
  doi={10.1007/978-3-031-22007-4},
  publisher={Springer}
}

@book{guckenheimer2013nonlinear,
  title={Nonlinear oscillations, dynamical systems, and bifurcations of vector fields},
  author={Guckenheimer, John and Holmes, Philip},
  year={2013},
  doi={10.1007/978-1-4612-1140-2},
  publisher={Springer Science \& Business Media}
}

@book{wiggins2003introduction,
  title={Introduction to applied nonlinear dynamical systems and chaos},
  author={Wiggins, Stephen},
  year={2003},
  doi={10.1007/b97481},
  publisher={Springer}
}

@article{newell1969finite,
  title={Finite bandwidth, finite amplitude convection},
  author={Newell, Alan C and Whitehead, John A},
  journal={Journal of Fluid Mechanics},
  volume={38},
  number={2},
  pages={279--303},
  year={1969},
  publisher={Cambridge University Press}
}

@book{cross2009pattern,
  title={Pattern formation and dynamics in nonequilibrium systems},
  author={Cross, Michael and Greenside, Henry},
  year={2009},
  doi={10.1017/CBO9780511627200},
  publisher={Cambridge University Press}
}

@article{schnakenberg1979simple,
  title={Simple chemical reaction systems with limit cycle behaviour},
  author={Schnakenberg, J{\"u}rgen},
  journal={Journal of theoretical biology},
  volume={81},
  number={3},
  pages={389--400},
  year={1979},
  doi={10.1016/0022-5193(79)90042-0},
  publisher={Elsevier}
}
